%% file: high_frequency_Calderon_problem.tex
\documentclass[11pt,reqno,oneside]{amsart}
\usepackage[a4paper, total={6in, 9in}]{geometry}

\usepackage[usenames]{color}
\usepackage{tikz}
\usepackage{amsmath,pdfsync,verbatim,graphicx,epstopdf,enumerate}
\usepackage{todonotes}
\usepackage{amsmath,amscd, amssymb, amsthm, mathrsfs}
\usepackage[abbrev,nobysame,alphabetic]{amsrefs}
\usepackage[colorlinks=true]{hyperref}
\usepackage{cancel}
\usepackage{accents}
\usetikzlibrary{patterns}
\usetikzlibrary{decorations.pathreplacing,decorations.markings,arrows.meta,intersections,calc}

\hypersetup{allcolors=blue}
\usepackage{mathtools}  
\mathtoolsset{showonlyrefs}

\newcommand{\I}{\mathrm{i}}

\DeclareMathOperator{\tr}{\mathrm{tr}}

\newcommand{\Beq}{\begin{equation}}
	\newcommand{\Eeq}{\end{equation}}
\newcommand{\beq}{\begin{equation*}}
	\newcommand{\eeq}{\end{equation*}}
\newcommand{\bal}{\begin{align}}
	\newcommand{\eal}{\end{align}}

\usepackage{mathtools}

\newtheorem{theorem}{Theorem}[section]

\newtheorem{lemma}[theorem]{Lemma}
\newtheorem{proposition}[theorem]{Proposition}

\theoremstyle{definition}

\newtheorem{remark}[theorem]{Remark}

\newcommand{\R}{\mathbb{R}}

\newcommand{\norm}[1]{\lVert #1 \rVert}
\newcommand{\abs}[1]{\lvert #1 \rvert}

\newcommand{\supp}{\mathop{\rm supp}}
\newcommand{\dif}[1]{\,\mathrm{d}{#1}}

\newcommand{\nrm}[2][]{ \| {#2} \|_{#1}}

\allowdisplaybreaks

\newcommand{\vol}{\mathrm{vol}}

\newcommand{\comp}{\mathrm{comp}}

\newcommand{\mc}{\mathcal}

\newcommand{\Spec}{\mathrm{Spec}}

\renewcommand{\div}{\mathrm{div}\,}

\numberwithin{equation}{section}

\title[Anisotropic Calder\'on problem at high fixed frequency]{The anisotropic Calder\'on problem for Riemannian metrics at high fixed frequency}

\author{Mihajlo Ceki\'c$^{\mathsection}$}
\address{$^{\mathsection}$ CNRS, Université Paris-Est Créteil, Université Gustave Eiffel, LAMA UMR8050, F-94010 Créteil, France}
\email{mihajlo.cekic@cnrs.fr}
\author{Katya Krupchyk$^\ast$}
\address{$^\ast$ Department of Mathematics, University of California, Irvine, CA 92697-3875, USA}
\email{katya.krupchyk@uci.edu}
\author{Suman Kumar Sahoo $^\dagger$}
\address{ $^\dagger$ Department of Mathematics, Indian Institute of Technology Bombay.}
\email{suman@math.iitb.ac.in}

\author{Gunther Uhlmann $^\ddagger$}
\address{$^\ddagger$Department of Mathematics, University of Washington
}
\email{gunther@math.washington.edu}

\begin{document}


\begin{abstract}
We study an inverse boundary value problem for the Helmholtz equation on a smooth compact non-trapping Riemannian manifold with strictly convex boundary. We prove that, given two such metrics, for sufficiently large but fixed frequency $\lambda$, equality of their Dirichlet-to-Neumann maps implies equality of their lens data, up to a smooth boundary-fixing diffeomorphism. This establishes a high-frequency bridge between the Calder\'on problem and lens rigidity. We also prove a quantitative version of this result uniformly valid in a suitable bounded set of Riemannian metrics. Crucially, we show that Gaussian beam type solutions concentrating on maximal geodesics, split near the boundary into outgoing/incoming parts, such that the phase of the outgoing part contains information on exit point, direction, and travel time. The recovery of lens data then proceeds by a careful stationary phase analysis and a boundary integral identity.
	\end{abstract}

\maketitle

\section{Introduction and main results}
\subsection{Problem setting}

Let $(M,g)$ be a smooth compact connected Riemannian $n$-manifold with boundary, with $n \geq 2$, and let $\lambda \geq 0$ be a frequency. We consider the boundary value problem
\begin{equation} \label{eq:1-AC22}
	\left\{ \begin{aligned}
		(-\Delta_g - \lambda^2)u&=0 \quad \mbox{in}\quad M, \\
		u&=f \quad \mbox{on} \quad \partial M,
	\end{aligned} \right.
\end{equation}
where $\Delta_g$ is the Laplace-Beltrami operator on $M$.
In local coordinates $(x_1, \dotsc, x_n)$,
\[
\Delta_g u
= \sum_{j, k = 1}^n |g|^{-\frac{1}{2}}\partial_{x_j}(|g|^{\frac{1}{2}} g^{jk} \partial_{x_k} u),
\]
where $(g^{jk})=(g_{jk})^{-1}$ and $|g| = \det(g_{jk})$. Let $\mathrm{Spec}(-\Delta_g)$ denote the set of Dirichlet eigenvalues of $-\Delta_g$. Suppose $\lambda^2 \not \in \Spec(-\Delta_g)$, and let $ u\in C^{\infty}(M)$ be the unique solution of \eqref{eq:1-AC22} for a Dirichlet boundary condition $f \in C^{\infty}(\partial M)$.
The Dirichlet-to-Neumann map (DN map) associated to \eqref{eq:1-AC22} is given by
\begin{equation} \label{eq:DN-AC22}
	\Lambda^{\lambda}_g: C^{\infty}(\partial M) \to C^{\infty}(\partial M), \qquad \Lambda^{\lambda}_g f
	:= \partial_{\nu} u |_{\partial M},
\end{equation}
where $\nu$ is the outward unit normal. If $\psi: M \to M$ is a diffeomorphism equal to the identity on the boundary, it is straightforward to check that $\Lambda_g^\lambda = \Lambda_{\psi^*g}^\lambda$, where $\psi^*g$ denotes pullback by $\psi$ of $g$.

Assume further that $(M, g)$ has strictly convex boundary (i.e. the second fundamental form is positive definite on $\partial M$). Let $SM \subset TM$ be the unit tangent bundle of $(M,g)$ and define the incoming $(-)$ and outgoing $(+)$ boundary of $SM  $ as:
\[
\partial_\pm SM := \left\{ (x,v) \in SM~|~ x \in \partial M, \pm g_x(v,\nu(x)) > 0 \right\}.
\]
For any $(x,v) \in SM$, the maximally extended geodesic $\gamma_{(x,v)}$, with initial condition given by $\gamma_{(x,v)}(0)=x$, $\dot{\gamma}_{(x,v)}(0)=v$ is defined on a time interval $[0,\tau_g(x,v)]$ where $\tau_g(x,v)\in \R_{>0}\cup \{\infty\}$. We make the assumption that $(M, g)$ is \emph{non-trapping}, that is, that $\tau_g(x, v) < \infty$ for all $(x, v) \in SM$. Denote the \emph{travel time} function as $\ell_g := \tau_g|_{\partial_-SM}$. We define the \emph{scattering map}
\[S_g(x,v) := \big(\gamma_{(x, v)}(\ell_g(x,v)), \dot{\gamma}_{(x, v)}(\ell_g(x,v))\big),\quad S_g : \partial_- SM \longrightarrow \partial_+ SM.\]
The pair $(\ell_g, S_g)$ is referred to as the \emph{lens data} of the Riemannian manifold $(M,g)$.

The lens data encode the boundary information of the geodesic flow that can be observed from the outside. A fundamental inverse problem arising in integral geometry and tomography is whether one can determine the interior geometry --- namely, the Riemannian metric $g$ in $M$ --- from knowledge of the lens data $(\ell_g,S_g)$. In geophysical applications, this problem corresponds to recovering the speed of wave propagation inside a medium such as the Earth; see for instance \cite{PSU_book}.

The inverse problem we are interested in is to recover the lens data of $(M,g)$ from DN map $\Lambda_g^{\lambda}$ for a large but \emph{fixed} $\lambda$. Our first main result is as follows:

\begin{theorem}\label{thm_main_1}
    Suppose $(M, g_1)$ and $(M, g_2)$ are smooth connected oriented non-trapping  Riemannian $n$-manifolds with strictly convex boundaries, where $n \geq 2$. If $n = 2$, we assume additionally that $g_1 - g_2$ vanishes to infinite order at the boundary. Then, there exists a finite constant $\lambda_0 = \lambda_0(g_1, g_2) > 0$ such that the following holds. If there exists $\lambda > \lambda_0$ such that $\lambda^2 \not \in \Spec(-\Delta_{g_1}) \cup \Spec(-\Delta_{g_2})$ and $\Lambda_{g_1}^\lambda = \Lambda_{g_2}^\lambda$, then there exists a smooth diffeomorphism $\psi: M \to M$ such that $\psi|_{\partial M} = \mathrm{Id}$ and $\psi^*g_2 = g_1$ as bilinear forms on $TM|_{\partial M}$, and under the resulting identification of incoming and outgoing boundaries $\partial_\pm S_{g_1}M = \partial_\pm S_{\psi^*g_2}M =: \partial_\pm SM$, we have
    \begin{equation}\label{eq:lens-data-equality}
        (\ell_{g_1}, S_{g_1}) = (\ell_{\psi^*g_2}, S_{\psi^*g_2})\quad \mathrm{on}\quad \partial_-SM.
    \end{equation}
\end{theorem}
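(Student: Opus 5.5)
The plan has three stages: normalize the metrics at the boundary, test the identity $\Lambda_{g_1}^\lambda=\Lambda_{g_2}^\lambda$ against Gaussian beams, and read off the lens data by stationary phase.

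\textbf{Boundary normalization.} First we reduce to metrics that agree to infinite order at $\partial M$. For $n\ge 3$ we use boundary determination for the fixed-frequency DN map. The full symbol of $\Lambda_g^\lambda$ as a pseudodifferential operator determines the Taylor series of $g$ in boundary normal coordinates; the Lee--Uhlmann argument is unaffected by the lower-order term $\lambda^2$. For $n=2$ this is the stated assumption. We then build $\psi$ from boundary normal coordinates of $g_1$ and $g_2$ near $\partial M$, extended to a diffeomorphism of $M$ equal to the identity on $\partial M$. Afterwards $g:=g_1$ and $\tilde g:=\psi^*g_2$ agree to infinite order at $\partial M$, and $\Lambda_g^\lambda=\Lambda_{\tilde g}^\lambda$ still holds. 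In particular $\partial_\pm SM$ coincide.

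\textbf{Integral identity and Gaussian beams.} Green's formula gives
\[
\int_{\partial M}\bigl(\Lambda^\lambda_{g}-\Lambda^\lambda_{\tilde g}\bigr)f\,\overline{h}\,dS=0
\]
for all $f,h$. We use it as follows: if $u_1$ solves the $g$-Helmholtz equation and $u_2$ solves the $\tilde g$ equation with the same Dirichlet data, then their Neumann data coincide. Fix $(x_0,v_0)\in\partial_-SM$. We construct, for the fixed large $\lambda$ (which plays the role of $h^{-1}$), a Gaussian beam $u_1=e^{i\lambda\varphi_1}a_1+r_1$ concentrating on $\gamma^g_{(x_0,v_0)}$. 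We take it to be the exact solution with Dirichlet data $f=u_1|_{\partial M}$, and choose the remainder to be $O(\lambda^{-N})$ by the non-trapping resolvent bound $\|(-\Delta_g-\lambda^2)^{-1}\|=O(\lambda^{-1})$. Non-trapping and convexity ensure the beam is well defined on the whole maximal geodesic, which meets $\partial M$ transversally at both ends.

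Near the boundary the beam splits into an incoming piece near $x_0$ and an outgoing piece near the exit point. The phase of the outgoing piece has real part whose boundary restriction is stationary exactly at $x_+=\pi(S_g(x_0,v_0))$. There its tangential gradient is the tangential part of $\dot\gamma(\ell_g)$, and its value is $\ell_g(x_0,v_0)$ plus the phase at $x_0$. We write $u_2$ for the $\tilde g$-solution with the same boundary data $f$. By uniqueness and the same resolvent estimate, $u_2$ agrees, up to $O(\lambda^{-N})$, with the $\tilde g$-beam launched from the same incoming data. Near $x_0$ the two beams are identical because the metrics agree to infinite order there.

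\textbf{Reading off the lens data.} Since the Neumann data agree, pairing $\partial_\nu u_1-\partial_\nu u_2$ on $\partial M$ with a test wave packet $\overline{h}$ localized near an arbitrary boundary point and covector gives an identity of oscillatory integrals. We evaluate the pairing of $h$ with $\partial_\nu u_1$ and with $\partial_\nu u_2$ by stationary phase. If $S_g(x_0,v_0)\ne S_{\tilde g}(x_0,v_0)$, choose $h$ microlocalized at $S_g(x_0,v_0)$ but away from $S_{\tilde g}(x_0,v_0)$. The $g$-term then has a nonzero leading amplitude of size $\lambda^{c}$, while the $\tilde g$-term is $O(\lambda^{-\infty})$. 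Equating the two terms forces $S_g=S_{\tilde g}$. Once exit data agree, the leading terms differ only through the phase factors $e^{i\lambda\ell_g}$ and $e^{i\lambda\ell_{\tilde g}}$. Comparing them, and varying $(x_0,v_0)$ continuously, gives $\lambda(\ell_g-\ell_{\tilde g})\in 2\pi\mathbb{Z}$ on the connected components of $\partial_-SM$. Near the glancing set both travel times tend to $0$ and agree, since the metrics coincide to infinite order at $\partial M$. Continuity then forces $\ell_g=\ell_{\tilde g}$.

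\textbf{Main obstacle.} The hard step is the uniformity needed at a single fixed $\lambda$. The errors in the stationary phase expansion and in the beam remainders must be bounded uniformly in $(x_0,v_0)$, and in the test localization at a scale depending on the separation $|S_g-S_{\tilde g}|$. Only then is there a single $\lambda_0(g_1,g_2)$ beyond which the leading terms dominate. I would first handle a compact subset of $\partial_-SM$ away from the glancing set, using compactness and the resolvent estimate with constants depending only on $g_1,g_2$. I would then treat near-glancing directions separately, using strict convexity and the boundary agreement from the first step, where short geodesics of $g$ and $\tilde g$ coincide to high order.
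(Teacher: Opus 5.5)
Your argument breaks at the description of $u_2$. You take $u_2$ to be the $\tilde g$-Dirichlet solution with data $f=u_1|_{\partial M}$, and you assert that it agrees up to $O(\lambda^{-N})$ with the $\tilde g$-beam launched from $(x_0,v_0)$. This is circular. In fact it is false exactly when $S_g(x_0,v_0)\neq S_{\tilde g}(x_0,v_0)$, which is the case you need to rule out. The trace of that $\tilde g$-beam has its outgoing Gaussian bump at $\pi S_{\tilde g}(x_0,v_0)$, whereas $f$ has it at $\pi S_g(x_0,v_0)$. So the two Dirichlet data are close only if the scattering maps already agree. Indeed, if you knew $u_2\approx$ the $\tilde g$-beam, you could compare Dirichlet traces directly and would not need the Neumann data at all.

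Even if the traces did match to $O(\lambda^{-N})$, you would need a bound on the Dirichlet solution operator at this fixed $\lambda$. Its norm blows up as $\lambda^2$ approaches $\Spec(-\Delta_{\tilde g})$, and the hypothesis $\lambda^2\notin\Spec$ gives no quantitative control. The non-trapping estimate (Proposition~\ref{prop_resolvent_estimate}) is a solvability estimate: it produces \emph{some} solution with bounds and imposes no boundary condition. It is not an $O(\lambda^{-1})$ bound on a Dirichlet resolvent. Consequently $\partial_\nu u_1=\partial_\nu u_2$ is just $\Lambda^\lambda_{g}f=\Lambda^\lambda_{\tilde g}f$ restated, and your wave-packet comparison has nothing to act on.

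The missing idea is to use the DN equality only through an identity involving boundary values of solutions you construct and control.

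\begin{itemize}
\item \textbf{Independent beams.} The paper builds $u_1,u_2$ independently, as corrected beams for $g_1,g_2$ with the same \emph{incoming} data. It then applies Proposition~\ref{prop:extension-2}, $\langle\partial_\nu(u_1-u_2),u\rangle-\langle u_1-u_2,\partial_\nu u\rangle=\langle(\Lambda^\lambda_{g_1}-\Lambda^\lambda_{g_2})u_1,u\rangle=0$, with a reversed $g_2$-beam $u$. Alternatively it uses the quadratic version $\Im\langle\partial_\nu(u_1-u_2),u_1-u_2\rangle=0$.
\item \textbf{Exact agreement in a collar.} First it extends to $\widetilde M\supset M$ with $\widetilde g_1=\widetilde g_2$ \emph{exactly} on the collar $\widetilde M\setminus M^\circ$. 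Then the incoming parts cancel exactly. The identity reduces to a diagonal Gaussian term centred at $x_2$, bounded below, against a mixed term. The mixed term is bounded by $e^{-c\lambda d_{\partial M}(x_1,x_2)^2}$, and by non-stationary phase once $x_1=x_2$.
\item \textbf{Near-glancing directions.} The same extension handles this worry. Every geodesic of $M$, nearly tangent ones included, is a sub-arc of a uniformly transverse geodesic of $\widetilde M$. Agreement ``to high order'' of short geodesics would not give exact equality of lens data. Also, the beam estimates (exit-angle lower bound, boundary Gaussian localization) degenerate near glancing.
\item \textbf{Travel times.} At fixed $\lambda$ you never get $\lambda(\ell_g-\ell_{\tilde g})\in2\pi\mathbb Z$ exactly, and the exit amplitudes of the two beams differ. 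However, once $S_{g_1}=S_{g_2}$ is known, the identity $d\ell_g=S_g^*\alpha-\alpha$ (Proposition~\ref{prop:identity}) gives $\ell_{g_1}=\ell_{g_2}$ directly.
\end{itemize}
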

We note that the Calder\'on problem in dimension $n = 2$ has an additional conformal symmetry, which does not appear here since the operator in question is $-\Delta_g - \lambda^2$ and $\lambda$ is large. In fact, the case $n = 2$ in the more general setting where no equality of boundary jets is assumed follows directly from \cite[Theorem 1.1]{Carstean-Liimatainen-Tzou-24}, but we include this case for completeness. Using the known uniqueness results for lens rigidity, the equality \eqref{eq:lens-data-equality} further implies that $g_1$ and $\psi^*g_2$ are isometric under one of the following geometric assumptions:
\begin{itemize}
    \item when $n \geq 3$, under a strictly convex foliation condition \cite[Theorem 1.3]{Local_global_rigidity_SU};
    \item when $g_1$ and $\psi^*g_2$ are sufficiently close to each other (local uniqueness), under suitable assumptions, see \cites{Stefanov_Uhlmann_duke, Croke-Dairbekov-Sharafutdinov-00, Cekic-Guillarmou-Lefeuvre-24, Colin_JAMS, Stefanov-Uhlmann-09} and references therein;
    \item when $n = 2$, under certain curvature or simplicity assumptions \cites{Otal-90, Croke-90, Pestov-Uhlmann-05}.
\end{itemize}

\subsection{Quantification of uniqueness.} We now quantify the results of Theorem \ref{thm_main_1}. Let us first introduce some notation. We let $N \geq 2$ be an integer, $B > 1$, $\delta > 0$, $T > 0$, and we fix an arbitrary non-trapping Riemannian metric $g_0$ on the compact manifold $M$, with strictly convex boundary. Denote the set of smooth Riemannian metrics on $M$ by $\mathbf{Riem}(M)$, and set
    \begin{align}\label{eq:space-of-metrics}
        \mathcal{M}_{N, B, \delta, T}(g_0) &= \{g \in \mathbf{Riem}(M) \mid \|g - g_0\|_{C^N} \leq B,\quad g|_{M \setminus M_{\delta}} = g_0|_{M \setminus M_{\delta}},\,\,\\ 
        &\hspace{180pt}\tau_g \leq T ,\quad B^{-1}g_0 \leq g \leq B g_0\},
    \end{align}
    where $M_{\delta} = \{x \in M \mid d_{g_0}(x, \partial M) \geq \delta\}$. Here $\tau_g \leq T$ means that $\sup_{(x, v) \in S_gM} \tau_g(x, v) \leq T$. In particular, every metric in $\mathcal{M}_{N, B, \delta, T}(g_0)$ is non-trapping, and all maximal geodesics have length $\leq T$. Given $g_1, g_2 \in \mathcal{M}_{N, B, \delta, T}(g_0)$, there is a natural distance $d((\ell_{g_1}, S_{g_1}), (\ell_{g_2}, S_{g_2}))$ between the corresponding lens data, defined using the Sasaki metric of $g_0$, see \S \ref{ssec:distance-lens-data} below for a precise definition.
    
    We will denote by $H^s_{\lambda^{-1}}$ the semiclassical Sobolev space associated to a small parameter $\lambda^{-1} > 0$ and of index $s \in \mathbb{R}$, see \cite{Zworski} for more details. In particular, we have
    \[
        \|u\|_{H^1_{\lambda^{-1}}(\partial M)}^2 := \|u\|_{L^2(\partial M)}^2 + \lambda^{-2} \|du\|_{L^2(\partial M, T^*M)}^2,\quad u \in C^\infty(\partial M).
    \]
    
    \begin{theorem}\label{theorem:quantification}
        Let $(M, g_0)$ be a fixed background smooth connected oriented non-trapping Riemannian manifold with strictly convex boundary of dimension $n \geq 2$. There exists an integer $N = N(n) \geq 2$, such that for every $B > 1$, $T > 0$, and $\delta > 0$, there are constants $\lambda_0 > 0$, $\varepsilon_0 > 0$, and $C > 0$, such that the following holds. 
        
        If $\lambda > \lambda_0$ and $g_1, g_2 \in \mathcal{M}_{N, B, \delta, T}(g_0)$ satisfy
        \[
            \lambda^2 \not \in \Spec(-\Delta_{g_1}) \cup \Spec(-\Delta_{g_2}),
        \]
        then
        \[
            \lambda^{-1}\|\Lambda_{g_1}^\lambda - \Lambda_{g_2}^\lambda\|_{H^1_{\lambda^{-1}} \to L^2} \leq \varepsilon_0 \implies d((\ell_{g_1}, S_{g_1}), (\ell_{g_2}, S_{g_2})) \leq C\lambda^{-\frac{1}{2}}.
        \]
    \end{theorem}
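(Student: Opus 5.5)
Our plan is to run the proof of Theorem \ref{thm_main_1} with every constant tracked, using only quantities controlled uniformly on $\mathcal{M}_{N,B,\delta,T}(g_0)$. Since $g_1=g_2=g_0$ on $M\setminus M_\delta$, no boundary determination or diffeomorphism $\psi$ is needed: both metrics agree near $\partial M$, so $\partial_\pm SM$ are identified directly. We also extend $(M,g_j)$ by $g_0$ to a slightly larger strictly convex non-trapping manifold $(\widetilde M,g_0)$.

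\textbf{Step 1: uniform Gaussian beams.} Fix $(x,v)\in\partial_-SM$ and let $\gamma_j=\gamma^{g_j}_{(x,v)}$. We build Gaussian beam quasimodes $u_j=e^{\I\lambda\varphi_j}a_j$ along $\gamma_j$, starting in the collar where $g_j=g_0$. Using $\tau_{g_j}\le T$, the $C^N$ bound and the comparison $B^{-1}g_0\le g\le Bg_0$, the Riccati equation for the Hessian of $\varphi_j$ and the transport equations are solved on $[0,T]$ with bounds depending only on $(N,B,T,\delta,g_0)$. This requires $N$ large compared with $n$, which fixes $N(n)$. We then obtain
\[
\|(-\Delta_{g_j}-\lambda^2)u_j\|_{L^2}\le C\lambda^{-1/2}\|u_j\|,
\]
with uniform $C$. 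We correct $u_j$ to an exact solution using the resolvent on the extended non-trapping manifold, where the outgoing resolvent bound $O(\lambda^{-1})$ is uniform because non-trapping holds uniformly with escape time $\le T$. Near the boundary, the solution splits into an incoming part near $(x,v)$ and an outgoing part concentrating at $S_{g_j}(x,v)$. The outgoing phase equals $\ell_{g_j}(x,v)$ plus the linear phase $\langle \eta_j, y-y_j\rangle$ coming from the exit direction, up to quadratic terms.

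\textbf{Step 2: boundary identity and stationary phase.} From $\Lambda^\lambda_{g_1}=\Lambda^\lambda_{g_2}$ approximately, Green's formula gives an identity of the form
\[
\int_{\partial M}\bigl(\Lambda_{g_1}^\lambda-\Lambda_{g_2}^\lambda\bigr)f\,\overline{h}\,dS .
\]
Here $f=u_1|_{\partial M}$ (the incoming trace, common to both metrics since the beams coincide in the collar), and $h$ is a test solution of the form $e^{\I\lambda\varphi_2}$ taken from a time-reversed beam for $g_2$. After the normalization $\|f\|_{H^1_{\lambda^{-1}}}\lesssim\lambda^{(n-1)/4}$, the left side is bounded by $C\varepsilon_0\lambda\cdot$(normalization) by hypothesis. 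The right side reduces to an oscillatory boundary integral pairing the outgoing parts of the $g_1$- and $g_2$-beams, which are Gaussians of width $\lambda^{-1/2}$ centered at the exit points $y_1,y_2$. Stationary phase evaluates this integral as
\[
c\,e^{\I\lambda(\ell_1-\ell_2)}\exp\Bigl(-c\lambda\bigl(|y_1-y_2|^2+|\eta_1-\eta_2|^2\bigr)\Bigr)\,(1+O(\lambda^{-1/2})),
\]
where $c$ is uniformly nonzero.

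\textbf{Step 3: conclusion.} If $\varepsilon_0$ is small, the modulus bound forces $\lambda(|y_1-y_2|^2+|\eta_1-\eta_2|^2)\le C$, so the scattering maps differ by $O(\lambda^{-1/2})$. For the travel times, the phase factor alone determines $\ell_1-\ell_2$ only mod $2\pi/\lambda$. To remove this ambiguity, we vary $\lambda$ slightly, or better, we insert a small time shift: we use beams started at interior points along the geodesic, or compare with first-order Taylor terms in $(x,v)$. Combined with continuity of $\ell$ and the identity $\ell_1=\ell_2$ at glancing directions (short geodesics lying in the collar), a connectedness argument in $(x,v)$ gives $|\ell_1-\ell_2|\le C\lambda^{-1/2}$ uniformly.

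We expect this last point, and the uniform control of the subprincipal terms in the stationary phase so that the constant $c$ stays bounded below over the class, to be the main obstacles. The travel time step is the delicate one, because the phase only sees $\ell_1-\ell_2$ modulo $2\pi/\lambda$ and the connectedness argument must be made quantitative.
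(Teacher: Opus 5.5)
\textbf{There is a genuine gap: the inference in your Step 3 runs the wrong way, because the non-oscillatory diagonal term is missing.} As you set it up, one side of the identity is $\mc{O}(\varepsilon_0)$ after normalisation. The other side is only the cross term between the outgoing $g_1$-beam and the $g_2$ test beam. If that were the whole identity, smallness of the cross term would force $\lambda(|y_1-y_2|^2+|\eta_1-\eta_2|^2)$ to be \emph{large}, not bounded.

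Only the incoming parts coincide: $u_1^-=u_2^-$, but the traces $u_1|_{\partial M}$ and $u_2|_{\partial M}$ do not. So Proposition \ref{prop:extension-2}, applied to the corrected solutions, reduces to pairing $u_1^+-u_2^+$ with a test beam $u$ whose piece near $x_2$ carries the \emph{same} phase $\Phi_2^+$.
\begin{itemize}
\item The $u_2^+$ contribution $I_2$ has integrand $e^{-2\lambda\Im\Phi_2^+}\Re(\partial_\nu\Phi_2^+)\,a_0^{(2),+}\overline{a}_0^-(\chi_2^+)^2$. It satisfies $|I_2|\geq C_2>0$ uniformly on the class, by uniform exit transversality ($\Re\partial_\nu\Phi_2^+>\varepsilon_+/2$) and lower bounds on the amplitudes.
\item The identity then reads $I_2-I_1=\mc{O}(\lambda^{-1/2})+\mc{O}(\varepsilon_0)$. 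It is the resulting lower bound $|I_1|\geq C_2/2$, played against an upper bound $|I_1|\lesssim e^{-c\lambda(|y_1-y_2|^2+|\eta_1-\eta_2|^2)}+\mc{O}(\lambda^{-1/2})$, that gives $\mc{O}(\lambda^{-1/2})$-closeness of exit points and directions. Your joint Gaussian bound in $(y,\eta)$ is a fine substitute for the paper's two-step localisation and non-stationary-phase argument, provided you also treat the non-perturbative regime.
\item You must also show that pairing $u_1^+-u_2^+$ with the piece of the test beam near the entry point $x_0$ is lower order, since uniformly over $\mc{G}_\varepsilon$ the exit point may lie close to $x_0$. The paper uses that the normal components have opposite signs to write $\partial_\nu(\Phi_1^++\overline{\Phi_1^-})$ as a multiple of $\nabla_{\partial M}\Phi$ plus $\mc{O}(d_{\partial M})$, and then integrates by parts.
\end{itemize}

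\textbf{The travel-time step is not yet an argument, and the phase you propose to use is wrong.} $e^{\I\lambda(\ell_1-\ell_2)}$ is not the phase of the cross term. At the relevant scale $|y_1-y_2|\sim\lambda^{-1/2}$ the cross term has phase $\lambda\Psi$, where $\Psi=\Phi(z_{\mathrm{crit}})$ is the value of an almost analytic extension of $\Phi=\Phi_1^+-\overline{\Phi}_2^+$ at its complex critical point. Already for quadratic phases, $\lambda\Psi-\lambda(\ell_1-\ell_2)$ contains $-\tfrac{\lambda}{2}(\eta_1+\eta_2)\cdot(y_1-y_2)=\mc{O}(\lambda^{1/2})$, which cannot be absorbed into a factor $1+\mc{O}(\lambda^{-1/2})$. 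Varying $\lambda$ is not available, since the hypothesis holds at one fixed frequency, and starting the beams at interior points changes nothing.

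For the connectedness argument you need the following:
\begin{itemize}
\item a phase $\Psi(x,v)$ that is well defined up to $\mc{O}(\lambda^{-\infty})$ independently of charts and extensions, continuous on $\partial_-SM$, zero near glancing, and with $\Re\Psi=\ell_{g_1}-\ell_{g_2}+\mc{O}(\lambda^{-1/2})$;
\item a $\lambda$-independent amplitude $F$ with $|F|$ bounded below and $dF$ bounded.
\end{itemize}
Then $e^{\I(\lambda\Re\Psi+\arg F)}=1+o(1)$ can be lifted continuously along a fibre path from a glancing direction, giving $\lambda\Re\Psi=\mc{O}(1)$. This is what Lemmas \ref{lemma:stationary-phase} and \ref{lemma:smoothness-parameter} provide.

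A cleaner route, once $d(S_{g_1},S_{g_2})=\mc{O}(\lambda^{-1/2})$ is known, is to integrate $d\ell_g=S_g^*\alpha-\alpha$ (Proposition \ref{prop:identity}) along such a fibre path. Applying Stokes' theorem on the thin strip between its images under $S_{g_1}$ and $S_{g_2}$ gives $|\ell_{g_1}-\ell_{g_2}|=\mc{O}(\lambda^{-1/2})$ with no $2\pi/\lambda$ ambiguity at all.
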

    
    The assumption that $g_1$ and $g_2$ agree with a fixed metric $g_0$ near the boundary is taken for simplicity, and can probably be removed with some more work on quantifying the boundary determination results of \cite{LeeUhlmann1989}. We expect a classical H\"older stability statement to be true, in line with the work \cite{Ma_Sahoo_Salo} and the phenomenon of increasing stability for inverse problems at large frequency (whereas for the Calder\'on problem at `low frequencies' we expect only a logarithmic type of stability). The quantity $\lambda^{-1}\|\Lambda_{g_1}^\lambda - \Lambda_{g_2}^\lambda\|_{H^1_{\lambda^{-1}} \to L^2}$ appears naturally when studying a suitable boundary integral identity. Although $\Lambda_g^\lambda$ is a classical pseudodifferential operator of order one for each fixed admissible $\lambda$, the full family $(h\Lambda^{h^{-1}}_g)_h$ where $h = \lambda^{-1}$ need not belong to the standard semiclassical pseudodifferential calculus as the associated operator $-h^2\Delta_g - 1$ has a non-empty characteristic set. In contrast, $-h^2\Delta_g + 1$ is semiclassically elliptic, and its normalized Dirichlet-to-Neumann map is a semiclassical pseudodifferential operator of order one.
    
\subsection{State of the art and previous work.} Let $\Omega \subset \mathbb{R}^n$ be a bounded domain with smooth boundary and let $\gamma(x)$ denote a positive conductivity function. Given a boundary voltage $f$, the electrical potential $u$ inside $\Omega$ satisfies the elliptic equation
\begin{equation}
\begin{cases}
\nabla \cdot (\gamma \nabla u) = 0 & \text{in } \Omega, \\
u|_{\partial \Omega} = f .
\end{cases}
\end{equation}
The corresponding boundary current flux is
\[
\gamma \partial_\nu u \big|_{\partial \Omega}.
\]
This defines the \emph{Dirichlet-to-Neumann map} (DN map)
\[
\Lambda_\gamma : f \mapsto \gamma \partial_\nu u|_{\partial \Omega}.
\]
The Calder\'on problem asks: Does the Dirichlet-to-Neumann map $\Lambda_\gamma$ determine the conductivity $\gamma$ uniquely? It was shown in \cite{Calderon_Paper} that the linearisation of DN map at $\gamma \equiv 1$ is injective, using special complex exponential harmonic solutions. A major breakthrough was achieved by Sylvester and Uhlmann \cite{sylvester1987global}, who constructed \emph{complex geometrical optics (CGO)} solutions and used them to prove global uniqueness for smooth isotropic conductivities in dimensions $n \geq 3$. In two dimensions, the problem was solved by Nachman \cite{Nachman_annals} for $C^2$ conductivities, and later extended to merely $L^\infty$ conductivities by Astala and P\"aiv\"arinta \cite{Astala_Paivarint_Annals_Math} using quasiconformal techniques. For the anisotropic conductivity equation $-\div(\gamma \nabla u) = \mu u$ at a fixed $\mu \neq 0$, counterexamples to uniqueness with $C^k$ regularity were constructed in \cite{Daude-Helffer-Kamran-Nicoleau-24}, and upgraded to $C^\infty$ and Gevrey coefficients in \cite{Daude-Enciso-Helffer-Kamran-Nicoleau-24}; the latter work also treats the underlying operator $-\Delta_g + V$ with a prescribed non-constant potential $V$. We emphasise that these formulations \emph{differ} from the one considered in this article.

The Calder\'on problem is geometric in nature and is most naturally formulated as in \eqref{eq:1-AC22} for $\lambda = 0$ and $n \geq 3$ using Riemannian metrics; see also \cite{Uhlmann2014}. Again, when $\lambda = 0$ and $n \geq 3$, the problem remains largely open in the smooth category. Uniqueness is known under additional analyticity assumptions on $(M,g)$; see \cites{LeeUhlmann1989,LassasUhlmann2001}. In the smooth case, uniqueness in a fixed conformal class is known on conformally transversally anisotropic manifolds under suitable injectivity assumptions for the transversal geodesic ray transform; see \cite{Dos_Jems, DKSU}. We also refer to the very recent works \cites{Stefanov-26, Lin-26} establishing local rigidity of the Euclidean metric for the anisotropic Calderón problem on smooth bounded domains in $\mathbb{R}^n$ for $n \geq 3$. See also \cite{Carstea-Feizmohammadi-Oksanen-23} for further related results on the anisotropic Calderón problem. In two dimensions, the problem admits an additional conformal invariance and is unconditionally solved in \cite{LassasUhlmann2001}; see also \cites{Belishev,Sharafutdinov} for related results in dimension two. 

Recently, there has been a lot of progress on Calder\'on-type inverse problems at high fixed frequency, starting from \cite{Uhlmann-Wang-24}, where uniqueness was shown for compactly supported potentials with fixed Riemannian metrics of non-positive curvature, and \cite{Ma_Sahoo_Salo} where uniqueness was proven under the assumption that the $X$-ray transform is stably invertible and that $(M, g)$ is non-trapping. Later, the Calder\'on problem for connections at high frequency was considered in \cite{St-Amant-24} and uniqueness was shown under similar geometric assumptions as in the present paper implying that the parallel transport transform is injective; we note that in comparison, in Theorem \ref{thm_main_1} we are able to remove the technical assumption of no self-intersection of geodesics at the boundary. Analogous uniqueness results were obtained for certain non-linear versions of the Calder\'on problem, see \cites{St-Amant-24, KMSSS} and also references therein.

In dimensions $n \ge 3$, for the wave equation $\partial_t^2 - \Delta_g$, Belishev and Kurylev showed, using the boundary control method, that the dynamical DN map determines the Riemannian manifold up to an isometry equal to the identity on the boundary, see \cites{Belishev-Kurylev-91, Belishev-Kurylev-92, Belishev-97, Katchalov-Kurylev-Lassas-01}; see also \cites{hyperbolic_dn_SU,hyperbolic_dn_map_nonsimple_SU}. However, for the stationary Laplace--Beltrami operator, the corresponding problem remains open. 

\subsection{Proof ideas} The proof has two main ingredients: understanding the geometry of Gaussian beam solutions near the boundary, and an analysis of their interactions in a boundary integral identity. In the proof of Theorem \ref{thm_main_1}, we begin by placing the two metrics in a common extension where they agree in an exterior collar. This is achieved using the determination of the boundary jets of the metrics from the equality of the Dirichlet-to-Neumann maps in dimensions $n \geq 3$, or the assumed agreement of the boundary jets in dimension $n = 2$. In the proof of Theorem \ref{theorem:quantification}, agreement with a fixed background metric in a boundary collar is built in to the definition of the metric class.

Strict convexity and non-trapping then yield a uniform geometric description of the
geodesics near the boundary. Geodesics which are sufficiently close to tangential remain
entirely in the common collar. On the other hand, for every geodesic entering the inner region, its intersection with a sufficiently thin boundary collar consists of two short, uniformly transverse segments: an incoming segment adjacent to the entry point and an outgoing segment adjacent to the exit point (see Figure \ref{figure} below). Correspondingly, near the boundary Gaussian beams admit a scattering-type decomposition
\[
    u = u^- + u^+,
\]
where $u^-$ and $u^+$ are concentrated near the incoming and outgoing endpoints, respectively.
We refer to them as the incoming and outgoing parts of the beam. 

Let us consider an incoming vector $(x, v)$ generating two geodesics: one for $g_1$ and the other for $g_2$; write $u_1$ and $u_2$ for the corresponding Gaussian beams having the same initial data at $(x, v)$. Since the metrics agree in the common collar, the incoming parts $u_1^-$ and $u_2^-$ agree and hence cancel in the boundary integral identity of Proposition \ref{prop:extension-2}. The outgoing parts encode the lens data directly. More precisely, the centre of Gaussian beam $u_i^+$ is the exit point of the corresponding geodesic, the tangential gradient of the real part of its phase at the centre gives the tangential component of the exit direction, and the value of the phase at the centre is the travel time. The positive definiteness of the boundary Hessian of the imaginary part of the phase gives uniform Gaussian localization near the exit point. This structure is the basis of the two proofs of Theorems \ref{thm_main_1} and \ref{theorem:quantification} described below.

Let us mention that we use the construction of Gaussian beam quasimodes concentrated near non-tangential
geodesics, together with the uniform estimates needed in the argument, from \cites{Ma_Sahoo_Salo, St-Amant-24}. The quasimodes are corrected to exact solutions of the Helmholtz equation by means of the high-frequency solvability estimate for non-trapping manifolds from \cite{Ma_Sahoo_Salo}. 

In the first proof, given in Section \ref{section:three-solution} below and referred to as `three-solution proof', the two solutions $u_1$ and $u_2$ are tested against a third solution $u$ for $g_2$ going ‘backwards’, in the sense that the geodesic is taken with the opposite orientation and the transport data are prescribed at the exit point of the original $g_2$-geodesic. Correspondingly, the incoming and outgoing labels are interchanged, so that $u^\pm$ carries the same phase as $u_2^{\mp}$. We then study the interactions of the incoming and outgoing $(\pm)$ parts in the integral identity. The ‘backscattering term’ involves the interaction of $u^+$ with $u_i^+$, for $i =1, 2$, and is shown to be of lower order in Step 5 of the proof. Thus, the principal contribution comes from the interaction of $u_1^+ - u_2^+$ with $u^-$. A careful step-by-step stationary and nonstationary phase analysis computes the principal contributions of the relevant interactions and shows that the exit points, exit directions, and travel times agree; see Steps 6–8. In fact, in this geometric setting the scattering map determines the full lens data; see Proposition \ref{prop:known?} below. Nevertheless, we also recover the travel times directly from the integral identity, by a stationary phase argument, since this procedure may be of independent interest and potentially useful in other settings.

The proof of Theorem \ref{theorem:quantification} quantifies the steps of the preceding argument, treating the difference of the Dirichlet-to-Neumann maps as an error; see Section \ref{section:quantification-three-solutions}. The distance between the scattering maps is estimated similarly. There is, however, an important difference in the
recovery of the travel times. The mixed integral corresponding to the interaction of $u^-$ and
$u_1^+$ involves an oscillatory phase which need not have a real non-degenerate critical point. An almost analytic extension of this phase has a unique nearby non-degenerate complex
critical point, up to negligible errors arising from the choice of extension, and we establish a Melin–Sj\"ostrand-type stationary phase lemma to analyse its leading asymptotics, see Lemma \ref{lemma:stationary-phase} below.

The second proof, given in Section \ref{section:two-solutions} below and referred to as `two-solution proof', uses only the two exact solutions obtained from $u_1$ and $u_2$, together with a quadratic boundary identity for their difference. After the common incoming parts cancel, the leading identity compares a sum of positive diagonal Gaussian contributions with the mixed interaction of the outgoing beams. Direct Gaussian
localization and integration by parts recover the exit points and directions, with the natural
$\mc{O}(\lambda^{-\frac{1}{2}})$ accuracy in the quantitative setting. The travel times are then recovered as before using Proposition \ref{prop:known?}. In the quantitative case, one integrates this identity to obtain a control of the difference of travel times by the distance between scattering data. Together with the direct Gaussian lower bound for the diagonal term, this yields a two-solution proof which does not require stationary-phase asymptotics, nor does it require backscattering analysis.

\subsection{Acknowledgements} MC was supported by an Ambizione grant
(project number 201806) from the Swiss National Science Foundation during a part of the work on this project. SKS is supported by IIT Bombay seed grant (RD/0524-IRCCSH0-021) and ANRF Early Career Research Grant (ECRG) (RD/0125- ANRF000-016). MC and SKS thank ETH Z\"urich for providing working space, where part of this work was carried out. The research of K.K. is partially supported by the National Science Foundation (DMS 2408793). The work of G.U. is partially supported by the National Science Foundation. We warmly thank Yiran Wang for discussions and his contribution to the paper. We are also grateful to Colin Guillarmou and Gabriel P. Paternain for discussions about Proposition \ref{prop:known?}.

No AI/LLM tools were used, except in the final stage for proofreading.

\section{Preliminaries}

In this section, we collect several results that will be used later.

\subsection{Resolvent estimate} The following high frequency resolvent estimate was derived in \cite[Proposition 3.1]{Ma_Sahoo_Salo}.

\begin{proposition} \label{prop_resolvent_estimate}
    Let $(M,g)$ be a compact non-trapping manifold with smooth boundary.
    There are $C = C(M,g) > 0$ and $\lambda_0 = \lambda_0(M,g) > 0$ so that for any $\lambda \geq \lambda_0$ and any $f \in L^2(M)$, the equation 
    \[
    (-\Delta_g   - \lambda^2) u = f \text{ in $M$}
    \]
    has a solution $u \in H^2(M)$ with
    \[
    \lambda \norm{u}_{L^2(M)} + \norm{\nabla u}_{L^2(M)} + \lambda^{-1} \norm{\nabla^2 u}_{L^2(M)}
    \leq C\nrm[L^2(M)]{f}.
    \]
\end{proposition}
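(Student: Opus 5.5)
The approach is to prove the estimate by a semiclassical positive commutator (escape function) argument on a slightly larger non-trapping manifold, and then obtain the solution by duality. Set $h=\lambda^{-1}$ and $P_h = -h^2\Delta_g - 1$, so that the equation becomes $P_h u = h^2 f$. The target estimate is then $\|u\|_{L^2} + \|h\nabla u\|_{L^2} + \|h^2\nabla^2 u\|_{L^2} \le C h^{-1}\|P_h u\|_{L^2}$, which is the standard non-trapping bound with a loss of $h^{-1}$.

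First I extend $(M,g)$ to a slightly larger compact manifold $(\widetilde M,\tilde g)$ with boundary, chosen to be still non-trapping, so that $M$ sits in its interior. For the strictly convex case one can take a thin collar extension; in general, non-trapping on $M$ together with compactness should allow a small collar extension that is still non-trapping. On the cosphere bundle $\{|\xi|_g=1\}$ over $\widetilde M$, non-trapping lets me construct an escape function $a\in C^\infty(T^*\widetilde M)$ with $H_p a \ge 1$ near the characteristic set over $M$, where $p=|\xi|_g^2-1$. A concrete choice is to take $a(x,\xi)$ equal to minus the time to exit $\widetilde M$ along the backward bicharacteristic, smoothed and cut off. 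Then for $u\in C_c^\infty(\widetilde M^\circ)$ I apply the commutator identity
\[
\tfrac{i}{h}\langle [P_h,\mathrm{Op}_h(a)]u,u\rangle = -\tfrac{2}{h}\operatorname{Im}\langle P_h u,\mathrm{Op}_h(a)u\rangle,
\]
use the sharp G\aa rding inequality microlocally near the characteristic set, and use elliptic estimates away from it. This gives
\[
\|u\|_{L^2} \le C h^{-1}\|P_h u\|_{L^2} + \mathcal O(h)\|u\|_{L^2},
\]
and absorbing the error term yields $\|u\|\le Ch^{-1}\|P_hu\|$ for compactly supported $u$.

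Next comes existence. Since $P_h$ is formally self-adjoint, the a priori estimate for $P_h$ on $C_c^\infty(\widetilde M^\circ)$ gives, by Hahn--Banach, a solution $w\in L^2(\widetilde M)$ of $P_h w = h^2\tilde f$, where $\tilde f$ is $f$ extended by zero. Concretely, the functional $P_h\varphi\mapsto\langle \varphi, h^2\tilde f\rangle$ is bounded by $C h\|\tilde f\|$ on the range of $P_h$, and Riesz representation produces $w$ with $\|w\|_{L^2}\le Ch\|f\|_{L^2}$. Restricting $w$ to $M$ and multiplying by $\lambda$ gives the bound $\lambda\|w\|\le C\|f\|$. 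Interior semiclassical elliptic regularity on $\widetilde M^\circ$, via a cutoff equal to $1$ on $M$, then upgrades this to $\|h\nabla w\|_{L^2(M)}+\|h^2\nabla^2 w\|_{L^2(M)}\le C(\|w\|_{L^2(\widetilde M)}+h^2\|f\|)$. The commutator terms in that step are controlled by $\|w\|_{L^2(\widetilde M)}$ together with the $H^1_h$ norm on a slightly bigger set, which in turn is handled by the energy identity $\|h\nabla w\|^2 = \|w\|^2 + \langle P_h w,w\rangle$ localized. Unscaling gives exactly the stated inequality for $u=w|_M$.

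The main obstacle is the escape function step. Two things need care there: first, ensuring the extension stays non-trapping, so that bicharacteristics through $M$ leave a fixed neighbourhood in uniformly bounded time; second, gluing the commutator estimate near the characteristic set with the elliptic estimate off it without losing powers of $h$. A fallback that avoids pseudodifferential machinery would be a Carleman-type multiplier $\langle P_hu, X u\rangle$ with $X$ a vector field built from a strictly convex-like function. That route, however, requires such a function to exist, which non-trapping alone does not guarantee, so I would pursue the microlocal argument first.
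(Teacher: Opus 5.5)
Your proposal follows essentially the paper's route, which is imported from \cite{Ma_Sahoo_Salo} and sketched in \S\ref{ssec:uniformity}: an escape function on a non-trapping extension, a positive commutator a priori estimate, and Hahn--Banach duality, the only structural difference being that the paper runs the commutator estimate at Sobolev order $s=-2$ so that duality yields the $H^2_h$ bound directly, whereas your $L^2$ duality followed by interior elliptic regularity (valid because $-h^2\Delta_g+1=P_h+2$ is semiclassically elliptic) recovers the derivative bounds equally well. One point to tighten: the paper uses the exit time through the strictly convex hypersurface $\partial M_2$, which is smooth by transversality, but through a non-convex boundary the exit time can jump at glancing rays and mollifying it need not preserve $H_pa\ge 1$, so in general you should use the standard flow-box escape function; you also need $H_pa>0$ on the characteristic set over the whole support of your test functions (e.g.\ take $\varphi\in C_c^\infty(\Omega)$ with $M\subset\Omega\Subset\widetilde M^\circ$), not just over $M$.
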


Certain resolvent estimates also hold for Riemannian manifolds admitting trapped trajectories at frequencies suitably far from Dirichlet eigenvalues, see \cite[Theorem 2.1]{KMSSS}.

\subsection{Stationary phase} Here we recall the stationary phase lemma, see \cite[Theorem 7.7.5]{hormander1} which we only state for the leading term asymptotics. In the following, we write $\Phi'$ and $\Phi''$ for the gradient and Hessian of a function $\Phi$, respectively.
\begin{lemma}\label{lem_stationary_phase}
    Let $K \subset \R^m$ be a compact set containing $0 \in \R^m$, and let $U$ be an open neighbourhood of $K$. Let $c > 0$ and $\Phi \in C^\infty(U)$ a phase function such that
    \begin{align*}
        \Im \Phi \geq 0,\quad \Im \Phi(0) = 0, \quad \Phi'(0) = 0,\quad |\det \Phi''(0)| \geq c,\quad  |\Phi'(x)| \geq c|x|\quad \mathrm{for}\quad x \in U.
    \end{align*}
    Moreover, assume that $\Phi$ belongs to a fixed bounded subset of $C^{3 \lceil{\frac{m}{2} + 1}\rceil + 1}(U)$. Then, there exists $C > 0$, such that for all $u \in C^\infty_{\comp}(U)$ with $\supp(u) \subset K$, we have
\begin{equation}\label{eq:stationary_phase}
	\left|\lambda^{\frac{m}{2}} \int_{U} e^{\I\lambda \Phi(x)} u(x)\, dx - \det\left(\frac{\Phi''(0)}{2\pi \I}\right)^{-\frac{1}{2}} e^{\I \lambda \Phi(0)} u(0)\right| \leq C\lambda^{-1} \sum_{\abs{\alpha} \leq m + 3} \sup_{x \in U} \abs{D^\alpha u(x)}.
\end{equation}
\end{lemma}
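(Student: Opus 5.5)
The estimate in Lemma \ref{lem_stationary_phase} is a uniform, leading-term version of \cite[Theorem 7.7.5]{hormander1}. I would prove it by reducing to a quadratic model phase, after splitting off a neighbourhood of the origin. Throughout I track how constants depend on the data, so that uniformity over the fixed bounded $C^{3\lceil m/2+1\rceil+1}$ family of phases and the lower bounds $c$ is transparent.

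\textbf{Step 1: localization.} Choose $\chi\in C^\infty_{\comp}(B(0,2r))$ with $\chi=1$ on $B(0,r)$, where $r$ depends only on $c$ and the $C^3$ bound of $\Phi$. Split $u=\chi u+(1-\chi)u$. On $\supp((1-\chi)u)\subset K\setminus B(0,r)$ we have $|\Phi'|\ge cr$. There I apply the non-stationary phase argument with the operator
\[
L=\frac{\overline{\Phi'}\cdot\nabla}{\I\lambda\,|\Phi'|^2},
\qquad L\,e^{\I\lambda\Phi}=e^{\I\lambda\Phi}.
\]
The identity uses $\Im\Phi\ge0$ only through $|e^{\I\lambda\Phi}|\le1$. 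Integrating by parts $k$ times with $k=\lceil m/2+1\rceil$ gives a bound $C\lambda^{-k}\sum_{|\alpha|\le k}\sup|D^\alpha u|$. This is at most $C\lambda^{-1-m/2}$ times the right-hand side, since the coefficients of $(L^t)^k$ involve at most $k+1$ derivatives of $\Phi$.

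\textbf{Step 2: Taylor expansion of the phase.} Near $0$, write
\[
\Phi(x)=\Phi(0)+\tfrac12\langle Ax,x\rangle+R(x),\qquad A=\Phi''(0),\qquad |R(x)|\le C|x|^3.
\]
Since $\Im\Phi\ge0$ and $\Im\Phi(0)=0$, $\Im\Phi'(0)=0$, we get $\Im A\ge0$. Together with $|\det A|\ge c$, this lets the Gaussian model be computed exactly:
\[
\int e^{\frac{\I\lambda}{2}\langle Ax,x\rangle}\,dx=\lambda^{-m/2}\det\!\left(\frac{A}{2\pi\I}\right)^{-\frac12},
\]
with the branch of the square root fixed by analytic continuation from $A=\I\,\mathrm{Id}$. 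For the local term I would follow Hörmander's proof (Lemma 7.7.3 and Theorem 7.7.5). One route is to interpolate $\Phi_t=\Phi(0)+\tfrac12\langle Ax,x\rangle+tR$ and differentiate in $t$. The cleaner route is Hörmander's: first prove the estimate for the quadratic phase by Fourier transform, namely
\[
\Big|\int e^{\frac{\I\lambda}{2}\langle Ax,x\rangle}v\,dx-\lambda^{-m/2}\det\!\left(\tfrac{A}{2\pi\I}\right)^{-\frac12}v(0)\Big|\le C\lambda^{-m/2-1}\sum_{|\alpha|\le m+3}\sup|D^\alpha v|.
\]
Then handle $e^{\I\lambda R}$ by the standard trick of estimating the factor $e^{\I\lambda R}u$, with $R$ vanishing to third order. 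Expanding in the quadratic stationary phase formula, each factor $\lambda R$ is compensated by three extra powers of $\lambda^{-1/2}$, so only the $u(0)$ term survives at leading order. The number of derivatives of $\Phi$ needed is what produces the stated regularity index $3\lceil m/2+1\rceil+1$.

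\textbf{Main obstacle.} The delicate point is the remainder $R$ with complex-valued $\Phi$. If $R$ is complex, $e^{\I\lambda R}$ is not a symbol, and it is bounded only because $\Im\Phi\ge0$ globally. Hörmander handles this via $\Im\Phi\ge0$ and the inequality $|\Phi'(x)|^2\lesssim\Im\Phi(x)$ near points where $\Im\Phi=0$, together with the hypothesis $|\Phi'(x)|\ge c|x|$. This hypothesis gives the global bound
\[
\big|\lambda^{m/2}\textstyle\int e^{\I\lambda\Phi}\,u\,|x|^{2j}\,dx\big|\lesssim\lambda^{-j}.
\]
That bound is proved by writing $|x|^2\lesssim|\Phi'|^2$, expressing $|\Phi'|^2$ through $\overline{\Phi'}\cdot\nabla e^{\I\lambda\Phi}$, and integrating by parts. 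I expect this step, made quantitative with explicit derivative counts, to be the main work. The remaining steps are bookkeeping to ensure every constant depends only on $c$, $K$, $U$ and the bound on $\Phi$.
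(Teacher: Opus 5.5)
\textbf{How this compares with the paper.} The paper's proof is a direct citation. It applies H\"ormander's Theorem 7.7.5 with $k=\lceil \frac m2+1\rceil$; that theorem's constant is uniform when $\Phi$ ranges over a bounded set of $C^{3k+1}$ and $|x|/|\Phi'(x)|$ stays bounded, which are exactly the hypotheses here. It then multiplies by $\lambda^{m/2}$, uses $\lambda^{m/2-k}\le\lambda^{-1}$ and $2k\le m+3$, and absorbs the terms $\lambda^{-j}L_ju$, $1\le j<k$, into the error. The bound $|\det\Phi''(0)|\ge c$ controls $\det(\Phi''(0)/2\pi\I)^{-1/2}$ and the $\Phi''(0)^{-1}$ inside $L_j$. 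You instead re-derive Theorem 7.7.5. Your Step 1 and the quadratic model are fine, but the local step, where you go beyond quoting H\"ormander, contains two real errors.

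\textbf{First error: $e^{\I\lambda R}$ is not bounded.} We have $|e^{\I\lambda R}|=e^{-\lambda\Im R}$ with $\Im R=\Im\Phi-\frac12\langle\Im\Phi''(0)x,x\rangle$, and this can be negative: for $m=1$ and $\Phi=\frac12x^2+\I(\frac12x^2+x^3)$ one gets $\Im R=x^3$. Only $e^{\I\lambda\Phi}$ is bounded. Likewise $|\Phi'|^2\lesssim\Im\Phi$ fails for any real $\Phi$. The inequality that does hold is $|(\Im\Phi)'|^2\le 2\Im\Phi\,\sup|(\Im\Phi)''|$, and it is not needed when $|\Phi'(x)|\ge c|x|$ on all of $U$. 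So the remainder in the expansion of $e^{\I\lambda R}$ cannot be controlled the way you describe.

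The correct mechanism is your ``first route''. Set $\Phi_0=\Phi(0)+\frac12\langle\Phi''(0)x,x\rangle$ and $\Phi_s=(1-s)\Phi_0+s\Phi$. Then $\Im\Phi_s\ge0$, and $|\Phi_s'(x)|\ge\frac c2|x|$ on $B(0,2r)$ uniformly in $s\in[0,1]$. Taylor's formula in $s$ gives $\int e^{\I\lambda\Phi}\chi u\,dx=\int e^{\I\lambda\Phi_0}(1+\I\lambda R)\chi u\,dx-\lambda^2\int_0^1(1-s)\int e^{\I\lambda\Phi_s}R^2\chi u\,dx\,ds$. The last term is $O(\lambda^{-1-m/2})$ because $R^2\chi u$ vanishes to order $6$; this uses the estimate below, applied to the phases $\Phi_s$.

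\textbf{Second error: your power count stops at $\lambda^{-1/2}$.} ``Each factor $\lambda R$ is compensated by three powers of $\lambda^{-1/2}$'' leaves a net $\lambda^{-1/2}$. So the linear term $\I\lambda\int e^{\I\lambda\Phi_0}R\chi u\,dx$ would only be $O(\lambda^{-1/2-m/2})$. Worse, your displayed one-term quadratic estimate, applied to $v=R\chi u$ with $v(0)=0$, gives only $O(\lambda^{-m/2})$ for it, i.e.\ no decay after normalisation.

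The missing ingredient is parity. The quadratic expansion involves only $\langle\Phi''(0)^{-1}D,D\rangle^\nu v(0)$, so an amplitude vanishing to order $3$ first contributes at $\nu=2$; this is H\"ormander's constraint $2\nu\ge3\mu$ in $L_j$. You therefore need the quadratic lemma with two terms.

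Alternatively, for a general phase write $x=B(x)\Phi'(x)$ with $B(x)=\bigl(\int_0^1\Phi''(tx)\,dt\bigr)^{-1}$, which is smooth near $0$. Then $x_je^{\I\lambda\Phi}=(\I\lambda)^{-1}\sum_kB_{jk}\partial_ke^{\I\lambda\Phi}$. Induction on $|\beta|$, starting from the case $\beta=0$, gives $\int x^\beta w\,e^{\I\lambda\Phi}\,dx=O(\lambda^{-\lceil|\beta|/2\rceil-m/2})$. This identity is also the correct form of your step ``$|x|^2\lesssim|\Phi'|^2$''. An inequality cannot be integrated by parts, and $|x|^2/|\Phi'|^2$ is not smooth at $0$.

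With these repairs you recover H\"ormander's proof, which is exactly why the paper simply quotes Theorem 7.7.5.
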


We note that this leading order version of the stationary phase lemma follows directly from the statement in \cite[Theorem 7.7.5]{hormander1} for $k = \lceil{\frac{m}{2} + 1}\rceil$, after absorbing the higher order expansion terms inside the error; this is also the place where the lower bound on the determinant at zero is used. Also, note that the square root of the determinant is defined by selecting the usual holomorphic branch of square root, see the discussion in \cite[page 85]{hormander1}.

\subsection{Integral identity} We record an integral identity on the boundary, based on the self-adjoint property of the DN map. It essentially relates two DN maps on extensions of Riemannian manifolds. 

In what follows, we use the convention that the inner products are linear in the first argument and conjugate-linear in the second argument. We write $d\vol_g$ for the Riemannian volume measure of a Riemannian manifold $(M, g)$; on the boundary $\partial M$ we also use the natural volume measure. For simplicity, we will not make the restrictions to boundary explicit. By $M^\circ$ we denote the interior of $M$. We recall that for $\lambda \in \R$ such that $\lambda^2 \not \in \Spec(-\Delta_g)$, the DN map commutes with complex conjugation and is symmetric
\[
    \Lambda_g^\lambda \overline{f} = \overline{\Lambda_g^\lambda f},\quad \langle{\Lambda_g^\lambda f, h}\rangle_{L^2(\partial M)} = \langle{f, \Lambda_g^\lambda h}\rangle_{L^2(\partial M)},\quad f, h \in C^\infty(\partial M).
\]

\begin{proposition}\label{prop:extension-2}
    Let $M$ and $\widetilde{M}$ be smooth compact manifolds with boundary such that $M \subset \widetilde{M}$. For $i = 1, 2$, let $g_i$ and $\widetilde{g}_i$ be Riemannian metrics on $M$ and $\widetilde{M}$, respectively, such that $\widetilde{g}_i|_{M} = g_i$ and $\widetilde{g}_1 = \widetilde{g}_2$ in $\widetilde{M} \setminus M^\circ$. Assume that $\lambda^2 \not \in \Spec(-\Delta_{g_1}) \cup \Spec(-\Delta_{g_2})$. Then for all $u, u_1, u_2 \in C^\infty(\widetilde{M})$ such that
    \[
        (-\Delta_{\widetilde{g}_i} - \lambda^2)u_i = 0,\quad  (-\Delta_{\widetilde{g}_2} - \lambda^2) u = 0,
    \]
    we have
    \begin{itemize}
        \item[1.] $\langle{\partial_\nu(u_1 - u_2), u}\rangle_{L^2(\partial \widetilde{M})} - \langle{u_1 - u_2, \partial_\nu u}\rangle_{L^2(\partial \widetilde{M})} = \langle{(\Lambda_{g_1}^\lambda - \Lambda_{g_2}^\lambda)u_1, u}\rangle_{L^2(\partial M)}$,
        \item[2.] $\Im \langle{\partial_\nu(u_1 - u_2), u_1 - u_2}\rangle_{L^2(\partial \widetilde{M})} = \Im \langle{(\Lambda_{g_1}^\lambda - \Lambda_{g_2}^\lambda)u_2, u_1}\rangle_{L^2(\partial M)}$.
    \end{itemize}
\end{proposition}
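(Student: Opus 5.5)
The plan is to apply Green's formula on the collar $\widetilde{M}\setminus M^\circ$, where the two metrics coincide, and then use the DN map on $\partial M$. Write $g := \widetilde{g}_1 = \widetilde{g}_2$ on $\widetilde{M}\setminus M^\circ$. On this region $u_1 - u_2$ and $u$ both solve $(-\Delta_g - \lambda^2)w = 0$, and $\lambda$ is real. Green's second identity on the collar $\widetilde{M}\setminus M^\circ$, whose boundary is $\partial\widetilde{M}\sqcup\partial M$, gives for any two such solutions $w, v$
\[
\langle \partial_\nu w, v\rangle_{L^2(\partial \widetilde{M})} - \langle w, \partial_\nu v\rangle_{L^2(\partial \widetilde{M})} = \langle \partial_\nu w, v\rangle_{L^2(\partial M)} - \langle w, \partial_\nu v\rangle_{L^2(\partial M)}.
\]
Here $\nu$ on $\partial M$ is the outward normal of $M$. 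This relation uses $\overline{v}$ solving the same real equation, and the sign flip comes from the collar's outward normal on $\partial M$ being $-\nu$. Since the metrics agree to all orders across $\partial M$ from outside, the normal derivatives on $\partial M$ computed from inside $M$ coincide with those from the collar, because $u_i,u$ are smooth on $\widetilde{M}$. Note also that $\widetilde{g}_i|_{\partial M}=g_i$ agree there, so $\nu$ and the boundary measure on $\partial M$ are the same for both metrics.

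Next, restrict to $M$. There $u_i|_M$ solves the $g_i$-equation with Dirichlet data $u_i|_{\partial M}$, so $\partial_\nu u_i|_{\partial M} = \Lambda^\lambda_{g_i}(u_i|_{\partial M})$ by uniqueness, using $\lambda^2\notin \Spec$. Likewise $\partial_\nu u = \Lambda_{g_2}^\lambda u$.

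For item 1, take $w = u_1-u_2$ and $v = u$. The right side becomes
\[
\langle \Lambda_{g_1}^\lambda u_1 - \Lambda_{g_2}^\lambda u_2, u\rangle - \langle u_1-u_2, \Lambda^\lambda_{g_2}u\rangle .
\]
Using symmetry of $\Lambda^\lambda_{g_2}$ in the stated sesquilinear sense, which together with commuting with conjugation is what makes the term $\langle u_1 - u_2,\Lambda_{g_2}^\lambda u\rangle$ equal $\langle \Lambda_{g_2}^\lambda(u_1-u_2),u\rangle$, the $\Lambda_{g_2}$ terms collapse. What remains is $\langle(\Lambda_{g_1}^\lambda - \Lambda_{g_2}^\lambda)u_1,u\rangle$.

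For item 2, take $v = w = u_1-u_2$. Then the left side of the Green identity equals $2\I\,\Im\langle \partial_\nu w, w\rangle_{\partial\widetilde{M}}$, and similarly on $\partial M$. It therefore suffices to compute $\Im\langle \Lambda_{g_1}^\lambda u_1 - \Lambda_{g_2}^\lambda u_2, u_1 - u_2\rangle_{L^2(\partial M)}$. Expand this and use that $\langle \Lambda_{g_i}^\lambda f, f\rangle$ is real by symmetry, so $\Im\langle\Lambda_{g_1}u_1,u_1\rangle = \Im\langle\Lambda_{g_2}u_2,u_2\rangle=0$. Also use $\Im\langle \Lambda_{g_2} u_2, u_1\rangle = -\Im\langle u_1,\Lambda_{g_2}u_2\rangle$ and $\langle\Lambda_{g_1}u_1,u_2\rangle=\langle u_1,\Lambda_{g_1}u_2\rangle$. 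The cross terms then combine to $\Im\langle(\Lambda_{g_1}^\lambda-\Lambda_{g_2}^\lambda)u_2,u_1\rangle$.

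The only delicate point is the bookkeeping of signs and of which normal is used on $\partial M$. I expect this, together with the sign conventions in the cross terms of item 2, to be the main place to be careful.
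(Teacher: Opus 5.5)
Your proof is correct and is essentially the paper's argument. The paper writes $\langle \partial_\nu u_i, u\rangle_{L^2(\partial\widetilde M)}$ as the energy form over $\widetilde M$ and splits it into the collar $\widetilde M\setminus M$ (where $\widetilde g_1=\widetilde g_2$) and $M$ (where it becomes $\langle \Lambda^\lambda_{g_i}u_i,u\rangle_{L^2(\partial M)}$); with the symmetry of $\Lambda^\lambda_{g_2}$, this is the same content as your Green's identity on the collar plus $\partial_\nu u_i|_{\partial M}=\Lambda^\lambda_{g_i}u_i$. The only difference is in item 2: the paper applies item 1 twice (with $u=u_2$, then with the roles of $(u_1,g_1)$ and $(u_2,g_2)$ swapped) and subtracts, whereas your direct use of Green's second identity on the collar with $w=v=u_1-u_2$ is exactly the alternative proof the paper mentions in the remark after the proposition.
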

For $i = 1, 2$, note that $\Spec(-\Delta_{g_i})$ denotes the Dirichlet spectrum of $-\Delta_{g_i}$ on $M$. Note also that by assumption the outward normals of $\widetilde{g}_1$ and $\widetilde{g}_2$ agree on $\partial \widetilde{M}$, denoted by $\nu$. We remark that the identities hold for $H^2$ solutions by density, and that for admissible $\lambda$, the DN map extends continuously as a first order pseudodifferential operator to
\[
    \Lambda_g^\lambda: H^{\frac{3}{2}}(\partial M) \to H^{\frac{1}{2}}(\partial M),\quad \Lambda_g^\lambda: H^{1}(\partial M) \to L^2(\partial M),
\]
where $H^{\frac{3}{2}}(\partial M)$ appears as the trace space of $H^2(M)$ solutions. However, it is also possible to work with smooth solutions in the remainder of the paper, since Proposition \ref{prop_resolvent_estimate} can be arranged to produce smooth correction terms by considering an extension of the manifold. 
\begin{proof}
    Firstly, we simply observe that
    \[
        \langle{\partial_\nu u_2, u}\rangle_{L^2(\partial \widetilde{M})} = \int_{\widetilde{M}} (\langle{du_2, du}\rangle_{\widetilde{g}_2} - \lambda^2 u_2 \overline{u})\, d\vol_{\widetilde{g}_2} = \langle{u_2, \partial_\nu u}\rangle_{L^2(\partial \widetilde{M})},
    \]
    where we used Stokes' theorem throughout (or equivalently, the quadratic form associated to the DN map). For the part of the integral identity involving $u_1$, we make the following computation 
    \begin{align*}
        &\langle{\partial_\nu u_1, u}\rangle_{L^2(\partial \widetilde{M})} = \int_{\widetilde{M}} (\langle{du_1, du}\rangle_{\widetilde{g}_1} - \lambda^2 u_1 \overline{u})\,d\vol_{\widetilde{g}_1}\\
        &= \int_{\widetilde{M} \setminus M} (\langle{du_1, du}\rangle_{\widetilde{g}_1} - \lambda^2 u_1 \overline{u})\, d\vol_{\widetilde{g}_1} + \langle{\Lambda_{g_1}^\lambda u_1, u}\rangle_{L^2(\partial M)}\\
        &= \int_{\widetilde{M} \setminus M} (\langle{du_1, du}\rangle_{\widetilde{g}_1} - \lambda^2 u_1 \overline{u})\, d\vol_{\widetilde{g}_1} + \langle{(\Lambda_{g_1}^\lambda - \Lambda_{g_2}^\lambda)u_1,  u}\rangle_{L^2(\partial M)} + \langle{u_1, \Lambda_{g_2}^\lambda  u}\rangle_{L^2(\partial M)}\\
        &= \int_{\widetilde{M} \setminus M} (\langle{du_1, du}\rangle_{\widetilde{g}_1} - \lambda^2 u_1 \overline{u})\, d\vol_{\widetilde{g}_2} + \int_{M} (\langle{du_1, du}\rangle_{g_2} - \lambda^2 u_1 \overline{u})\, d\vol_{g_2}\\
        &\hspace{8cm}+ \langle{(\Lambda_{g_1}^\lambda - \Lambda_{g_2}^\lambda)u_1,  u}\rangle_{L^2(\partial M)}\\
        &= \langle{u_1, \partial_\nu u}\rangle_{L^2(\partial \widetilde{M})} + \langle{(\Lambda_{g_1}^\lambda - \Lambda_{g_2}^\lambda)u_1,  u}\rangle_{L^2(\partial M)},
    \end{align*}
    where in the first two lines we again used Stokes' theorem (or the quadratic form associated to the DN map), in the third line we used that the DN map is (formally) self-adjoint, in the fourth line we used that $\widetilde{g}_1 = \widetilde{g}_2$ in $\widetilde{M} \setminus M$ as well as Stokes' theorem, which we also used in the last line. Subtracting the previous identity from this one yields the first integral identity.

    By the first part, by replacing $u$ with $u_2$ we get
    \[
        \langle{\partial_\nu(u_1 - u_2), u_2}\rangle_{L^2(\partial \widetilde{M})} - \langle{u_1 - u_2, \partial_\nu u_2}\rangle_{L^2(\partial \widetilde{M})} = \langle{(\Lambda_{g_1}^\lambda - \Lambda_{g_2}^\lambda)u_1, u_2}\rangle_{L^2(\partial M)},
    \]
    and by swapping $u_1$ and $u_2$ (as well as $g_1$ with $g_2$) in the preceding identity we get
    \[
        \langle{\partial_\nu(u_1 - u_2), u_1}\rangle_{L^2(\partial \widetilde{M})} - \langle{u_1 - u_2, \partial_\nu u_1}\rangle_{L^2(\partial \widetilde{M})} = \langle{(\Lambda_{g_1}^\lambda - \Lambda_{g_2}^\lambda)u_2, u_1}\rangle_{L^2(\partial M)}.
    \]
    The second identity is obtained by subtracting the first identity from the second one, and the fact that the DN map is self-adjoint.
\end{proof}

\begin{remark}
    A direct alternative proof of Proposition \ref{prop:extension-2}, Item 2, is to apply Green's second identity to $u = u_1 - u_2$ satisfying $(-\Delta_{\widetilde{g}_1} - \lambda^2) u = 0$ in the exterior domain $\widetilde{M} \setminus M$.
\end{remark}

\subsection{A geometric identity}

    Here we give a proof of a standard geometric identity for which we could not locate a proof in the literature. Given a Riemannian manifold $(M, g)$, write 
    \[
        \alpha(x, v)(\xi) := g_x(d\pi (\xi), v)
    \]
    for the Liouville contact form, and $(\varphi_t)_{t}$ for the geodesic flow. In other words, for $(x, v) \in SM$, we have
    \[
        \varphi_t(x, v) = (\gamma_{x, v}(t), \dot{\gamma}_{x, v}(t))
    \]
    for all $t$ for which the geodesic is defined. Denote by $X(x, v) = \partial_t|_{t=0} \varphi_t(x, v)$ the geodesic vector field. We recall that $\pi: SM \to M$ is the footpoint projection. 

\begin{proposition}\label{prop:identity}
    Assume $(M, g)$ has strictly convex boundary and is non-trapping. Then
    \[
        d\ell_g = S_g^*\alpha - \alpha.
    \]
\end{proposition}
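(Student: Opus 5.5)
We prove the identity by a first variation argument applied to the geodesic flow, regarding $\ell_g$ and $S_g$ as smooth functions on $\partial_-SM$.

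\emph{Smoothness.} Strict convexity and non-trapping make every maximal geodesic from $(x,v)\in\partial_-SM$ exit transversally at time $\ell_g(x,v)$. This holds because a tangential exit would contradict strict convexity, which forces the geodesic to leave a neighbourhood of the tangency on both sides within $M$. Extend $g$ to a slightly larger manifold $\widetilde M$. Then $\ell_g$ is defined implicitly by $\rho(\pi\varphi_{\ell}(x,v))=0$, where $\rho$ is a boundary defining function. Transversality at exit gives $\partial_t\rho(\pi\varphi_t)\neq 0$ at $t=\ell$, so the implicit function theorem shows that $\ell_g\in C^\infty(\partial_-SM)$. It follows that $S_g=\varphi_{\ell_g}$ is smooth as well.

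\emph{Key computation.} Two standard facts are used.
\begin{itemize}
\item[(i)] $\alpha$ is invariant under the geodesic flow, $\varphi_t^*\alpha=\alpha$. This follows from Cartan's formula, since $\iota_X\alpha=|v|^2=1$ is constant on $SM$ and $\iota_X d\alpha=0$, so $\mathcal L_X\alpha=0$.
\item[(ii)] $\alpha(X)=1$.
\end{itemize}
Take $\xi\in T_{(x,v)}\partial_-SM$ and a curve $c(s)$ in $\partial_-SM$ with $c(0)=(x,v)$ and $\dot c(0)=\xi$. Differentiating $S_g(c(s))=\varphi_{\ell_g(c(s))}(c(s))$ gives
\[
dS_g(\xi)=d\varphi_{\ell}(\xi)+d\ell_g(\xi)\,X(S_g(x,v)).
\]
Applying $\alpha$ and using (i) and (ii), we obtain
\[
(S_g^*\alpha)(\xi)=\alpha(\xi)+d\ell_g(\xi),
\]
which is the claimed identity on $\partial_-SM$. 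Here $\alpha$ is understood as pulled back to $\partial_\pm SM$ via inclusion, so both sides are 1-forms on $\partial_-SM$.

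\emph{Main point to check.} The only delicate step is the smoothness of $\ell_g$ up to the required regularity, that is, the transversality at exit. This follows from strict convexity as explained above. If one wants the identity on the closure of $\partial_-SM$, near the glancing set where $\ell_g\to0$ and smoothness fails in general, the identity is stated on the open set $\partial_-SM$ only, so no further issue arises.

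An alternative, more geometric check is the first variation of length. Vary the geodesic $\gamma_{c(s)}$ and note that $\ell_g(c(s))$ equals the length of $\gamma_{c(s)}$. The first variation formula gives that the derivative equals the endpoint terms $g(\dot\gamma,J)$ evaluated at the exit point minus the same at the entry point. These are exactly $S_g^*\alpha(\xi)-\alpha(\xi)$, confirming the result.
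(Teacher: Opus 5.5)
Your proof is correct and is essentially the paper's argument: differentiate $S_g=\varphi_{\ell_g}$ along $\partial_-SM$, evaluate $\alpha$ on $dS_g(\xi)=d\varphi_{\ell_g}(\xi)+d\ell_g(\xi)\,X$, and use $\alpha(X)=1$ together with the flow-invariance $\varphi_t^*\alpha=\alpha$ (the paper cites Paternain for the invariance rather than deriving it from Cartan's formula). Your implicit-function-theorem smoothness argument, which the paper replaces by a citation to PSU, and your first-variation cross-check are consistent with this proof but go beyond what the paper's proof contains.
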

\begin{proof}
    Observe that by definition $\varphi_{\ell_g(x, v)}(x, v) = S_g(x, v)$ for $(x, v) \in \partial_-SM$. Note that $S_g$ is differentiable in $(x, v)$ since $(M, g)$ is assumed to be non-trapped and since it has strictly convex boundary (see \cite[Lemma 3.3.5]{PSU_book}). Differentiating in $(x, v)$, using the chain rule, and that $X$ generates the geodesic flow, we get for $\xi \in T_{(x, v)}\partial_-SM$ that
    \[
        dS_g(\xi) = X(S_g(x, v)) d\ell_g(\xi) + d\varphi_{\ell_g(x,v)}(\xi).
    \]
    Applying $d\pi$, and using that $d\pi(X(x, v)) = v$, we get
    \[
        d\pi dS_g(\xi) = S_g(x, v) d\ell_g(\xi) + d\pi d\varphi_{\ell_g(x, v)}(\xi)
    \]
    Taking inner product with $S_g(x, v)$ and using that it has norm one, we get
    \[
        S_g^*\alpha(\xi) = d\ell_g(\xi) + g_{\pi(S_g(x, v))}(d\pi d\varphi_{\ell_g(x, v)}(\xi), S_g(x, v)).
    \]
    We are left to show that the last term equals $\alpha(x, v)(\xi)$. Indeed, it equals $\varphi_{\ell_g(x, v)}^*\alpha(x, v)(\xi)$, and $\alpha$ is known to be invariant under the geodesic flow, see \cite[Lemma 1.30]{Paternain-99}. This completes the proof.
\end{proof}

This determines the travel times from the scattering map up to a constant. In favourable geometric situations, we may exactly recover the travel times.

\begin{proposition}\label{prop:known?}
    Assume that $(M, g_1)$ and $(M, g_2)$ are non-trapping with strictly convex boundary, such that $g_1$ and $g_2$ agree on $TM|_{\partial M}$. Then 
    \[
        S_{g_1} = S_{g_2} \implies \ell_{g_1} = \ell_{g_2}.
    \]
\end{proposition}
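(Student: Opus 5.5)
By Proposition \ref{prop:identity} applied to $g_1$ and $g_2$, and since $g_1=g_2$ on $TM|_{\partial M}$, the sets $\partial_\pm SM$ and the restriction of the Liouville form $\alpha$ to them coincide for the two metrics. Hence $S_{g_1}=S_{g_2}$ gives
\[
    d(\ell_{g_1}-\ell_{g_2}) = S_{g_1}^*\alpha-\alpha - (S_{g_2}^*\alpha - \alpha) = 0 \quad \text{on } \partial_-SM.
\]
So $\ell_{g_1}-\ell_{g_2}$ is locally constant on $\partial_-SM$. We then need two things: the constant is zero, and it is the same constant on every component of $\partial_-SM$.

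To identify the constant, we look at almost tangential directions. Fix $x\in\partial M$ and let $(x,v_k)\in\partial_-SM$ with $v_k\to v_0\in T_x\partial M$, where $v_0$ is unit. By strict convexity and non-trapping, $\ell_{g_i}(x,v_k)\to 0$ for $i=1,2$; this is the standard continuity of $\tau_g$ up to the glancing set, with $\tau_g=0$ on $\partial_0SM$ (see \cite[Lemma 3.2.3 and \S 3.2]{PSU_book}). The only metric input is that the glancing set and the boundary behaviour of $\tau$ depend on $g|_{TM|_{\partial M}}$ through the second fundamental form. That dependence is harmless here: the vanishing of $\tau_g$ at glancing points holds for each metric separately, using only strict convexity of that metric. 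Therefore the locally constant function $\ell_{g_1}-\ell_{g_2}$ tends to $0$ along sequences approaching the glancing set.

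It remains to check that every connected component of $\partial_-SM$ has the glancing set $\partial_0SM$ in its closure. This is the step that needs care. Over each boundary component $\Sigma$ of $\partial M$, $\partial_-SM|_\Sigma$ is a bundle of open hemispheres. It is diffeomorphic to the unit ball bundle of $T\Sigma$ through $v\mapsto v-g(v,\nu)\nu$, whose inverse adds back the normal component. So each component is connected and its fibres accumulate on the equator $\partial_0SM$. (When $n=2$ the fibre is an interval whose endpoints are glancing, and the same reasoning applies.) Hence on each component the constant equals its limit at glancing points, which is $0$, and so $\ell_{g_1}=\ell_{g_2}$.

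I expect the main obstacle to be justifying the limit $\ell_g\to0$ near the glancing set without assuming more regularity than is available. I would do it by an elementary convexity argument. Take a boundary defining function $\rho$, positive in the interior. Along a geodesic $\gamma$ starting at $x$ with initial direction $v$, we have $\ddot{(\rho\circ\gamma)}\le -c<0$ near $\partial M$ by strict convexity, and $\dot{(\rho\circ\gamma)}(0)=-g(v,\nu)$, which is small. It follows that $\rho\circ\gamma$ returns to $0$ within time $O(g(v,\nu))$. Non-trapping then rules out the geodesic remaining in the interior beyond this first return, since the first return to $\partial M$ is the exit by definition of $\tau_g$.
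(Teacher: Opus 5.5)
Your proof is correct and follows the paper's argument. Proposition \ref{prop:identity} together with the agreement of $\alpha$ on $SM|_{\partial M}$ gives $d(\ell_{g_1}-\ell_{g_2})=0$; each component of $\partial_-SM$ lies over a component of $\partial M$; and strict convexity forces both travel times to $0$ at glancing directions, so every constant is zero. Your ball-bundle description of $\partial_-SM$ and the convexity argument for $\rho\circ\gamma$ only flesh out steps the paper states briefly, with two small points of care: the bound $\partial_t^2\rho(\gamma(t))\le -c$ holds only while the velocity stays nearly tangential (a short bootstrap up to the return time handles this), and non-trapping is not needed in that step.
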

\begin{proof}
    By assumption that $g_1$ and $g_2$ agree on $TM|_{\partial M}$, we have that the respective contact $1$-forms agree, i.e. $\alpha_{g_1} = \alpha_{g_2}$ when restricted to $SM|_{\partial M}$. By Proposition \ref{prop:identity} and the assumption on equality of scattering maps, we therefore conclude that $d(\ell_{g_1} - \ell_{g_2}) \equiv 0$. Thus $\ell_{g_1} - \ell_{g_2}$ is constant on connected components of $\partial_-SM$ (note that the metrics are non-trapping), and these connected components precisely correspond to connected components of $\partial M$. However, note that if $(v_i)_{i\geq 1} \in \partial_-SM$ is a sequence of incoming vectors such that $d(v_i, S\partial M) \to 0$ as $i \to \infty$ (i.e. the sequence becomes nearly tangent to the boundary), we have 
    \[
        \ell_{g_1}(v_i) \to_{i \to \infty} 0,\quad \ell_{g_2}(v_i) \to_{i \to \infty} 0,
    \]
    since by strict convexity of the boundary the corresponding geodesics become short. This proves that the mentioned constants are zero, and so that $\ell_{g_1} \equiv \ell_{g_2}$.
\end{proof}

\subsection{A natural distance between lens data.}\label{ssec:distance-lens-data} Assuming the notation of Theorem \ref{theorem:quantification}, the distance between lens data can be defined explicitly as 
\[
    d((\ell_{g_1}, S_{g_1}), (\ell_{g_2}, S_{g_2})) = \max_{(x, v) \in \partial_- SM}\left( |\ell_{g_1}(x, v) - \ell_{g_2}(x, v)| + d_{g_{\mathrm{Sas}}}(S_{g_1}(x, v), S_{g_2}(x, v))\right),
\]
where $g_{\mathrm{Sas}}$ denote the Sasaki metric on $SM|_{\partial M}$; see \cite{Paternain-99} for more details on the Sasaki metric.

However in practise we will use the following distance $d'$ defined using parallel transport, which is equivalent to $d$ near the diagonal and for incoming vectors that build a uniformly positive angle with the boundary. If $r > 0$ is the injectivity radius of $(\partial M, g_0)$, then
    \[
        d'((\ell_{g_1}, S_{g_1}), (\ell_{g_2}, S_{g_2})) = \max_{(x, v) \in \partial_- SM}\left( |\ell_{g_1}(x, v) - \ell_{g_2}(x, v)| + d'(S_{g_1}(x, v), S_{g_2}(x, v))\right).
    \]
    Note that the maximum is indeed a maximum since for vectors close to being tangent to $\partial M$, by strict convexity and by the equality $g_1 = g_2$ near the boundary, $\ell_{g_1} = \ell_{g_2}$ and $S_{g_1} = S_{g_2}$ there. Also, here we set for $(y, u), (z, w) \in \partial_+SM$ that
    \[
        d'((y, u), (z, w)) = \begin{cases}
            1,\quad \mathrm{if}\quad  d_{\partial M}(y, z) \geq r,\\
            \min(1, |\mc{P}_{z \to y} w - u| + d_{\partial M}(y, z)),\quad \mathrm{otherwise},
        \end{cases} 
    \]
    where $d_{\partial M}(\bullet, \bullet)$ denotes the boundary distance, and $\mc{P}_{z \to y}$ denotes the parallel transport with respect to Levi-Civita connection of $g_0$ along the shortest boundary geodesic from $z$ to $y$.

\section{Gaussian beams with uniform estimates}

In this section we recall the result about uniform Gaussian beam solutions, \cite[Theorem 6.2]{Ma_Sahoo_Salo}, and derive further consequences. Notably, for manifolds with strictly convex boundary, we observe that the solutions split into incoming $(-)$ and outgoing $(+)$ solutions as in scattering theory. We recall that we always have $h = \lambda^{-1}$.

\begin{theorem} \label{thm:gbu-AC22}
	Let $(M,g)$ be a compact oriented manifold with smooth boundary. Fix constants $T > 0$, $p \in [1,\infty)$, $k \geq 0$, and $K \geq 0$. There is a constant $C = C(M,g,T,p,k,K) > 0$ such that for any $(x,v) \in \partial_-SM$ with $\ell_g(x,v)\leq T$ and $h \in (0,1)$, there is $u = u_{x,v,h} \in C^{\infty}(M)$ associated with $\gamma = \gamma_{x,v}$ and satisfying 
	\begin{gather*}
		\norm{u}_{L^p(M)} \leq C, \\
		\norm{(-h^2 \Delta_g - 1)u}_{W^{k,p}(M)} \leq C h^K, \\
		\mathrm{supp}(u) \subset \{ y \in M \mid \mathrm{dist}_g \big( y, \gamma([0,\ell_g(x,v)]) \big) \leq C^{-1} \},
	\end{gather*}
	uniformly over all $0 < h < 1$.
	
	There is also a symmetric complex $(1,1)$-tensor $H(t) = H_{x,v}(t)$ on $T_{\gamma(t)} M$, depending smoothly on $t \in [0,\ell_g(x,v)]$ and satisfying 
	\[
	\mathrm{Im}(H(t)^{\flat}) \geq 0, \qquad \mathrm{Im}(H(t)^{\flat})|_{\dot{\gamma}(t)^{\perp}} \geq C^{-1} g|_{\dot{\gamma}(t)^{\perp}},
	\]
	such that $u = u_{x,v,h}$ has the following form. If $x_0 \in \gamma([0,\ell_g(x,v)])$ and if $t_1 < \dotsb < t_{N_p}$ are the times in $[0,\ell_g(x,v)]$ when $\gamma(t_l) = x_0$, then in a small neighbourhood $U$ of $x_0$ we have 
	\[
	u|_U = u^{(1)} + \dotsb + u^{(N_p)}
	\]
	where each $u^{(l)}$ satisfies 
	\[
	u^{(l)}|_U = h^{-\frac{n-1}{2p}} e^{\I \Phi^{(l)}/h} (a_0^{(l)} + h a_1^{(l)} + \dotsb + h^N a_N^{(l)}) \rho.
	\]
	Here $N = N(M,g,T,p,k,K)$, and $\rho$ is a smooth cutoff function supported near $\gamma|_{[t_l-\varepsilon,t_l+\varepsilon]}$. The phase $\Phi = \Phi^{(l)}$ is independent of $h$ and satisfies for $t$ near $t_l$ 
	\[
	\Phi(\gamma(t)) = t, \quad \nabla \Phi(\gamma(t)) = \dot{\gamma}(t), \quad \nabla^2 \Phi(\gamma(t)) = H(t)^{\flat}, \qquad \norm{\Phi}_{C^k(\overline{U})} \leq C.
	\]
	The amplitudes $a_j^{(l)}$ are independent of $h$, and for $t$ near $t_l$ one has 
	\[
	a_0^{(l)}(\gamma(t)) = \exp \left[ -\frac{1}{2} \int_0^t \mathrm{tr}_g(H(s)) \dif s \right], \qquad \norm{a_j^{(l)}}_{C^k(\overline{U})} \leq C.
	\]
\end{theorem}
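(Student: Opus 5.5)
This statement is \cite[Theorem 6.2]{Ma_Sahoo_Salo} specialised to our setting. The plan is to reproduce the standard Gaussian beam construction and to track the uniformity of all constants in $(x,v)$. We work on a slightly larger manifold $\widetilde M \supset M$ into which $g$ is extended smoothly, so that every geodesic $\gamma_{x,v}$ with $\ell_g(x,v)\le T$ extends a little past both endpoints. The set of such $(x,v)$ has compact closure in $\partial_-SM$, together with the limiting tangential vectors, whose geodesics are very short. This compactness is what will make all constants depend only on $(M,g,T,p,k,K)$.

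\textbf{Step 1 (local construction).} Cover $\gamma([-\varepsilon,\ell_g+\varepsilon])$ by finitely many short segments, each injectively embedded, and work in Fermi coordinates $(t,y)$ along each segment. On a segment, make the WKB ansatz $e^{\I\Phi/h}\sum_{j\le N} h^j a_j$.
\begin{itemize}
\item The eikonal equation $|\nabla\Phi|_g^2=1$ is solved only to high order on $\{y=0\}$. Taylor expanding gives $\Phi(t,0)=t$ and $\nabla\Phi=\dot\gamma$. The Hessian $H(t)$ solves the matrix Riccati equation $\dot H + H^2 + R = 0$, where $R$ is the curvature term.
\item We choose initial data with $\Im H(0)>0$ on $\dot\gamma^\perp$. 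The standard Jacobi-field factorisation $H=\dot Y Y^{-1}$, with $(Y,\dot Y)$ spanning a positive Lagrangian subspace, then shows that $H$ exists for all time and that $\Im H$ stays positive definite on $\dot\gamma^\perp$. Its lower bound depends continuously on the geodesic, hence is uniform by compactness.
\item The transport equations $2\langle\nabla\Phi,\nabla a_0\rangle+(\Delta_g\Phi)a_0=0$ are also solved to high order on $\gamma$. On $\gamma$ this gives $a_0(\gamma(t))=\exp[-\tfrac12\int_0^t\tr_g H]$. The higher $a_j$ are obtained recursively.
\end{itemize}

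\textbf{Step 2 (gluing, self-intersections, cutoff).} The phase and amplitude are parametrised by the geodesic time $t$, so the segment constructions are compatible on overlaps and glue along $\gamma$. Near a self-intersection point $x_0$ we do not glue anything. We simply sum the contributions from each passage time $t_l$, which gives the decomposition $u^{(1)}+\dots+u^{(N_p)}$. Multiply by a cutoff $\rho$ supported in a tube of width $C^{-1}$ around $\gamma$, and normalise by $h^{-\frac{n-1}{2p}}$.

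\textbf{Step 3 (estimates).} By positivity of $\Im H$ we have $\Im\Phi(t,y)\ge c|y|^2$ in the tube, so
\[
|u|\lesssim h^{-\frac{n-1}{2p}}e^{-c|y|^2/h}.
\]
This gives the uniform $L^p$ bound. Applying $-h^2\Delta_g-1$ produces two kinds of terms:
\begin{itemize}
\item terms of the form $h^{m}|y|^{r}e^{-c|y|^2/h}$, where the eikonal and transport equations were solved only to finite order; these are $\mathcal O(h^K)$ in $W^{k,p}$ once $N$ and the vanishing orders are large;
\item terms where derivatives fall on $\rho$; these live where $|y|\ge c'$ and are $\mathcal O(h^\infty)$.
\end{itemize}
All $C^k$ bounds on $\Phi$ and $a_j$ follow from smooth dependence of the ODE solutions on $(x,v)$ over a compact parameter set.

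\textbf{Main obstacle.} The main difficulty is uniformity near the boundary and at self-intersections. The number $N_p$ of passages through $x_0$ must be bounded uniformly; this follows since the length is at most $T$ and the segments have a uniform length. The other issue is controlling the near-tangential geodesics. For these I would use strict convexity, which makes their geodesics short and keeps them in a fixed collar, so the same argument applies with uniform constants.
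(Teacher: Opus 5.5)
The paper does not prove this statement. It imports it from \cite[Theorem 6.2]{Ma_Sahoo_Salo}, and \S\ref{ssec:uniformity} only recalls why the constants are uniform: the ODE coefficients along geodesics are bounded, the injectivity radius is bounded below, and there are uniformly few self-intersections. Your outline is the same construction, so the route agrees, but your Step 3 has a real hole.

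You list only two kinds of error terms. However, $\rho$ must also truncate the beam in the $t$-direction near the two ends of the extended geodesic, and there the derivatives of $\rho$ hit the beam at $|y|\lesssim h^{1/2}$. If $w$ is the untruncated WKB expression, the commutator term $-2h^2\langle\nabla\rho,\nabla w\rangle\approx-2\I h\,(\partial_t\rho)\,w$ is only $\mathcal{O}(h)$ in $L^p$ and $\mathcal{O}(h^{1-k})$ in $W^{k,p}$, never $\mathcal{O}(h^K)$. The construction works only if these truncation regions, thickened by the tube width $C^{-1}$, lie in $\widetilde M\setminus M$. The whole uniformity question is whether this can be arranged with the extension length and $C$ independent of $(x,v)$. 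When $\gamma_{x,v}$ meets $\partial M$ at both ends at angles bounded below, this is immediate: take the tube width small compared with angle times extension length. That is also the only situation in which the paper uses the theorem, namely $(x,v)\in\mathcal{G}_\varepsilon$ on strictly convex manifolds, where Proposition \ref{prop:boundary-estimate+geometry} also bounds the exit angle from below.

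Near-tangential directions are where your argument fails as written. Strict convexity is not among the hypotheses of the statement. Even granting it, the property you invoke (short geodesics staying in a collar) is not the one needed. What convexity actually gives is quadratic separation of the extended ends from $M$: $\sigma(\gamma_{x,v}(-s))\le -cs^2$ for small $s>0$, and likewise past the exit point, where $\sigma$ is the signed distance to $\partial M$ (positive in $M$). This lets you place the truncation outside $M$ with tube width of order (extension length)$^2$.

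Without such an input the mechanism breaks down. At a concave piece of $\partial M$, the backward extension of a geodesic entering at angle $\theta$ re-enters $M$ after time of order $\theta$ and never gets farther than order $\theta^2$ from $M$. So any truncation compatible with the support condition meets $M$ within distance $\mathcal{O}(\theta^2)$ of the axis of a beam of width $\sim h^{1/2}$. The resulting error is then not $\mathcal{O}(h^K)$ uniformly as $\theta\to 0$. You must either add an explicit geometric assumption and prove the separation, or restrict to uniformly transversal $(x,v)$.

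Separately, your compactness claim is inaccurate. The set $\{(x,v)\in\partial_-SM:\ell_g(x,v)\le T\}$ is not compact, its closure contains tangential vectors, and $\ell_g$ can be discontinuous without convexity. The uniform bounds on $H$, $\Phi$ and $a_j$ should instead come from one of two sources:
\begin{itemize}
\item parametrising over all unit vectors of the closed extension, with times in a fixed compact interval; or
\item Gronwall-type bounds for ODEs with uniformly bounded coefficients, as in \cite{Ma_Sahoo_Salo}.
\end{itemize}
That part is fixable; the end truncation is the genuine gap.
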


    We assume now that $(M, g)$ has strictly convex boundary; by definition, this means that the second fundamental form 
    \[
        \mathrm{II}(x)(u, v) := \langle{\nabla_u \partial_\nu, v}\rangle,\quad u, v \in T_x \partial M,
    \]
    is positive definite. Equivalently, if $\rho$ denotes the boundary distance function, we have $\mathrm{II} = -\nabla^2 \rho|_{T\partial M}$, where $\nabla^2$ is the Hessian defined by the Riemannian metric. For $\delta > 0$, write $M_\delta = \{x \in M \mid d(x, \partial M) \geq \delta\}$. By continuity, there exists $\delta_0 > 0$ such that $\partial M_\delta$ is strictly convex for $\delta < \delta_0$. If $\gamma$ is a geodesic starting at $\partial M$ and going inside $M$, we have
    \begin{equation}\label{eq:strictly-geodesic-convex}
        \partial_t^2 \rho(\gamma(t)) = \langle{\nabla_{\dot{\gamma}} \nabla \rho, \dot{\gamma}(t)}\rangle = \nabla^2 \rho(\dot{\gamma}(t), \dot{\gamma}(t)) = \nabla^2 \rho(\Pi_{(\nabla \rho)^\perp} \dot{\gamma}(t), \Pi_{(\nabla \rho)^\perp} \dot{\gamma}(t)) \leq 0,
    \end{equation}
    for all $t$ such that $\rho(\gamma(t)) \leq \delta_0$, where in the first equality we used that $\gamma$ is a geodesic (so $\nabla_{\dot{\gamma}} \dot{\gamma} = 0$), and in the last equality we wrote $\Pi_{(\nabla \rho)^\perp}$ for the orthogonal projector to $(\nabla \rho)^\perp$. We also used that 
    \[
        \nabla^2 \rho(\nabla \rho, v) = \langle{\nabla_{\nabla \rho} \nabla \rho, v}\rangle = 0,\quad v \in TM,
    \]
    because $\nabla \rho$ integrates to the geodesics orthogonal to the boundary, and in the last inequality that the second fundamental form of $\partial M_{\rho(\gamma(t))}$ is positive definite. As a consequence, any such geodesic $\gamma$ can have at most two possible intersections with $(\partial M_\delta)_{\delta < \delta_0}$.

    Recall that we say that $\gamma$ is \emph{maximal non-tangential} precisely if $\gamma = \gamma_{x, v}$ for some $(x, v) \in \partial_-SM$ (i.e. it cannot be extended further and intersects the boundary transversely).
    
\begin{proposition}\label{prop:boundary-estimate+geometry}
    Assume that $(M, g)$ has strictly convex boundary and is non-trapping, and let $\delta_0 > 0$ be such that $(\partial M_\eta)_{\eta \in [0, \delta_0]}$ are strictly convex. There exist $\delta \in (0, \delta_0)$ and $\varepsilon > 0$ such that the following holds. Let
    \[
        \mc{G}_{\varepsilon} = \{(x, v) \in \partial_-SM \mid -g_x(v, \nu(x)) > \varepsilon\}.
    \]
    Then we have
    \begin{itemize}
        \item Any maximal non-tangential geodesic $\gamma$ in $M_{\delta_0}$ is extended to a maximal non-tangential geodesic $\widetilde{\gamma}$ in $M$, which is the concatenation of $\gamma_1$, $\gamma$, and $\gamma_2$, so that $\gamma_1, \gamma_2 \subset M \setminus M_{\delta_0}$, and $\widetilde{\gamma} = \gamma_{x, v}$ for some $(x, v) \in \mc{G}_\varepsilon$.

        \item For all $\eta \in [0, \delta]$ and for each $(x, v) \in \mc{G}_{\varepsilon}$, we have that $\gamma_{x, v}$ intersects $\partial M_{\eta}$ exactly twice.

        \item There exists $\varepsilon_+ = \varepsilon_+(\varepsilon) > 0$ such that, for all $(x, v) \in \mc{G}_\varepsilon$, writing $S_g(x, v) = (x_+, v_+)$, we have
        \[
            g_{x_+} (v_+, \nu(x_+)) > \varepsilon_+.
        \]
    \end{itemize}
    
    Up to a further reduction of $\delta > 0$, the following holds. If $u$ is the Gaussian beam constructed in Theorem \ref{thm:gbu-AC22} along $\gamma \in \mc{G}_\varepsilon$, then
    \[
        u|_{M \setminus M_{\delta/2}} = u_- + u_+,
    \]
    where
    \[
        u^\pm = \lambda^{\frac{n-1}{2p}} e^{\I \lambda \Phi_\pm} (a_0^{\pm} + \lambda^{-1} a_1^{\pm} + \dotsb + \lambda^{-N} a_N^{\pm}) \chi_\pm,
    \]
    and the uniform estimates hold for $a^\pm, \chi_\pm, \Phi_\pm$. Also,
\begin{enumerate}
    \item[1.] $\|u\|_{L^p(\partial M)} = \mc{O}(1)$.
    \item[2.] $\|\partial_\nu u\|_{L^p(\partial M)} = \mc{O}(\lambda)$. Also, if 
    \[
        \partial_\nu u|_{\partial M} = \lambda^{\frac{n-1}{2p}}(e^{\I \lambda \Phi_+} \I\lambda \partial_\nu \Phi_+ a_0^+ \chi_+ + e^{\I \lambda \Phi_-} \I\lambda \partial_\nu \Phi_- a_0^- \chi_-) + R,
    \]
    then $\|R\|_{L^p(\partial M)} = \mc{O}(1)$.
\end{enumerate}
\end{proposition}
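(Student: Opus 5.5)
The plan is to prove the geometric statements first, using the concavity \eqref{eq:strictly-geodesic-convex} of $\rho\circ\gamma$ together with compactness and non-trapping. Then I will read off the splitting of the beam from the local form in Theorem \ref{thm:gbu-AC22}.

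\emph{First bullet.} Let $\gamma$ be a maximal non-tangential geodesic of $M_{\delta_0}$. Extend it in both directions in $M$. By non-trapping the extension $\widetilde\gamma$ reaches $\partial M$ in finite time, and by strict convexity of $\partial M$ it does so transversally. Suppose the extension re-entered $M_{\delta_0}$ after leaving it. Then $\rho\circ\widetilde\gamma$ would have an interior local minimum in the region $\{\rho\le\delta_0\}$, which contradicts concavity. Hence the pieces $\gamma_1,\gamma_2$ lie in $M\setminus M_{\delta_0}$.

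For the angle bound, the set of $(x,v)\in\partial_-SM$ whose geodesic meets $M_{\delta_0}$ is compact in the interior of $\partial_-SM$. To see this, note that near-tangent geodesics stay in a thin collar: $\rho\circ\gamma$ is concave with $\rho(0)=0$ and $\partial_t\rho(0)=-g(v,\nu)$ small, and $\partial_t^2\rho\le -c|\Pi v|^2$ on the collar, so $\max\rho\lesssim g(v,\nu)^2$. This gives $\varepsilon$.

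\emph{Second bullet.} For $(x,v)\in\mathcal G_\varepsilon$ the same concavity shows that $\rho\circ\gamma$ rises from $0$ to a maximum and then falls back to $0$. I pick $\delta$ smaller than $\inf_{\mathcal G_\varepsilon}\max\rho\circ\gamma_{x,v}$; this infimum is positive because of the initial slope $>\varepsilon$ and a uniform bound on $\partial^2_t\rho$. Each level $\eta\le\delta$ is then crossed exactly twice, and at both crossings $|\partial_t\rho|$ is bounded below.

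\emph{Third bullet.} The lower bound $\varepsilon_+$ on the exit angle follows by applying the previous argument to the reversed geodesic, or by continuity of $S_g$ on the compact closure of $\mathcal G_\varepsilon$ together with the fact that exit is transversal by strict convexity.

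For the splitting of $u$ I will use the second bullet. Shrinking $\delta$, the set $\gamma^{-1}(M\setminus M_{\delta/2})$ consists of two intervals $[0,t_-]$ and $[t_+,\ell]$, whose lengths are uniformly bounded by $C\delta/\varepsilon$. On these intervals $\gamma$ is a short, uniformly transverse arc, so it has no self-intersections. The support condition in Theorem \ref{thm:gbu-AC22} (distance $\le C^{-1}$, where I may shrink the tube) shows that near these arcs only the branch $u^{(l)}$ belonging to times in that interval contributes. I need to exclude middle-time passes of $\gamma$ through the collar, and this follows from the two-crossing property.

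Gluing finitely many local expansions along each arc with a partition of unity gives $u_\pm$ with a single phase $\Phi_\pm$. Here $\Phi_\pm$ is independent of the chart because it is determined by the transport equations along the arc, and $C^k$-uniformity of $a^\pm$, $\chi_\pm$, $\Phi_\pm$ is inherited from the theorem. The main obstacle is this uniformity: the tube width and the $\delta$ must be chosen independently of $(x,v)\in\mathcal G_\varepsilon$, using the compactness of the closure of $\mathcal G_\varepsilon$.

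Items 1 and 2 are then direct computations. On $\partial M$, restrict $u=u_-+u_+$ and use that $\operatorname{Im}\Phi_\pm$ restricted to $\partial M$ has a positive definite Hessian at the center. This holds because $\operatorname{Im}H\ge C^{-1}g$ on $\dot\gamma^\perp$ and $T\partial M$ is uniformly transverse to $\dot\gamma$. It gives $|e^{\I\lambda\Phi_\pm}|\le e^{-c\lambda|y-y_\pm|^2}$, so the $L^p$ norm of $\lambda^{\frac{n-1}{2p}}e^{-c\lambda|y|^2}$ over the $(n-1)$-dimensional boundary is $\mathcal O(1)$. Differentiating produces $\I\lambda\partial_\nu\Phi_\pm a_0^\pm\chi_\pm$, and every remaining term (derivatives of $a_j$ and $\chi_\pm$, and $\lambda^{-j}$ corrections) carries one less power of $\lambda$. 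This gives $\|R\|_{L^p}=\mathcal O(1)$.
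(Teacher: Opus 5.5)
Your proposal is correct. For the three bullets and the splitting it follows the paper's proof. The paper also takes $\varepsilon$ below the minimum of $-g(v,\nu)$ over incoming vectors whose geodesic reaches $\partial M_{\delta_0}$, and takes $\delta<\inf_{\mc{G}_\varepsilon}\max\rho\circ\gamma_{x,v}$. It obtains $\varepsilon_+$ from ``a small exit angle traps the geodesic in a thin collar''. It excludes middle passes by making the tube width less than $\delta/2$. Your bounds $\max\rho\lesssim g(v,\nu)^2$ and $\max\rho\geq\min(\delta_0,\varepsilon^2/(2C_0))$ only make the positivity of these constants explicit.

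The genuine difference is in Items 1--2.
\begin{itemize}
\item \emph{The paper's route.} It localizes $\supp\chi_-\cap\partial M$ in $B_{\partial M}(x_0,C\delta)$ and extends $\gamma_-$ slightly past $\partial M$. It proves $d_{\partial M}(y,x_0)\sim d(y,\widetilde\gamma_-(t_y))$ and combines this with the bulk bound $\Im\Phi_-\geq C\,d(\cdot,\widetilde\gamma_-)^2$, obtained in Fermi coordinates.
\item \emph{Your route.} You Taylor expand $\Im\Phi_\pm|_{\partial M}$ at the crossing point, where it vanishes to second order and has uniformly positive tangential Hessian. This is exactly the computation the paper does later in Proposition \ref{prop:phase-proposition}. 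It needs $\Im H(\dot\gamma,\cdot)=0$, which follows from $\Im H\geq 0$ and $\Im H(\dot\gamma,\dot\gamma)=0$. It also needs $\partial_\nu\Im\Phi_\pm(x_\pm)=0$, which kills the second fundamental form term.
\end{itemize}
Your route is shorter and needs no exterior extension of the geodesic. The paper's route additionally controls $\Im\Phi_\pm$ in the bulk, not only on $\partial M$.

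To close your route, you should state that $\supp\chi_\pm\cap\partial M$ lies within a uniform Taylor radius of $x_\pm$. This follows from your arc-length bound $\mc{O}(\delta/\varepsilon)$ and the tube width, after shrinking $\delta$.

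Two minor precisions:
\begin{itemize}
\item A point with $\rho<\delta/2$ can be within $\delta/2$ of $\gamma(t)$ with $\rho(\gamma(t))$ close to $\delta$. So the end intervals to use are those of $\gamma^{-1}(M\setminus M_\delta)$ (second bullet at $\eta=\delta$), not those of $\gamma^{-1}(M\setminus M_{\delta/2})$.
\item The incoming and outgoing arcs may cross each other. So instead of saying ``only one branch contributes'' near each arc, define $u_\pm$ as the sum of the branches with times in the incoming, respectively outgoing, interval.
\end{itemize}
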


\begin{figure}[htbp]
    \centering
    \def\svgwidth{0.8\textwidth} 
    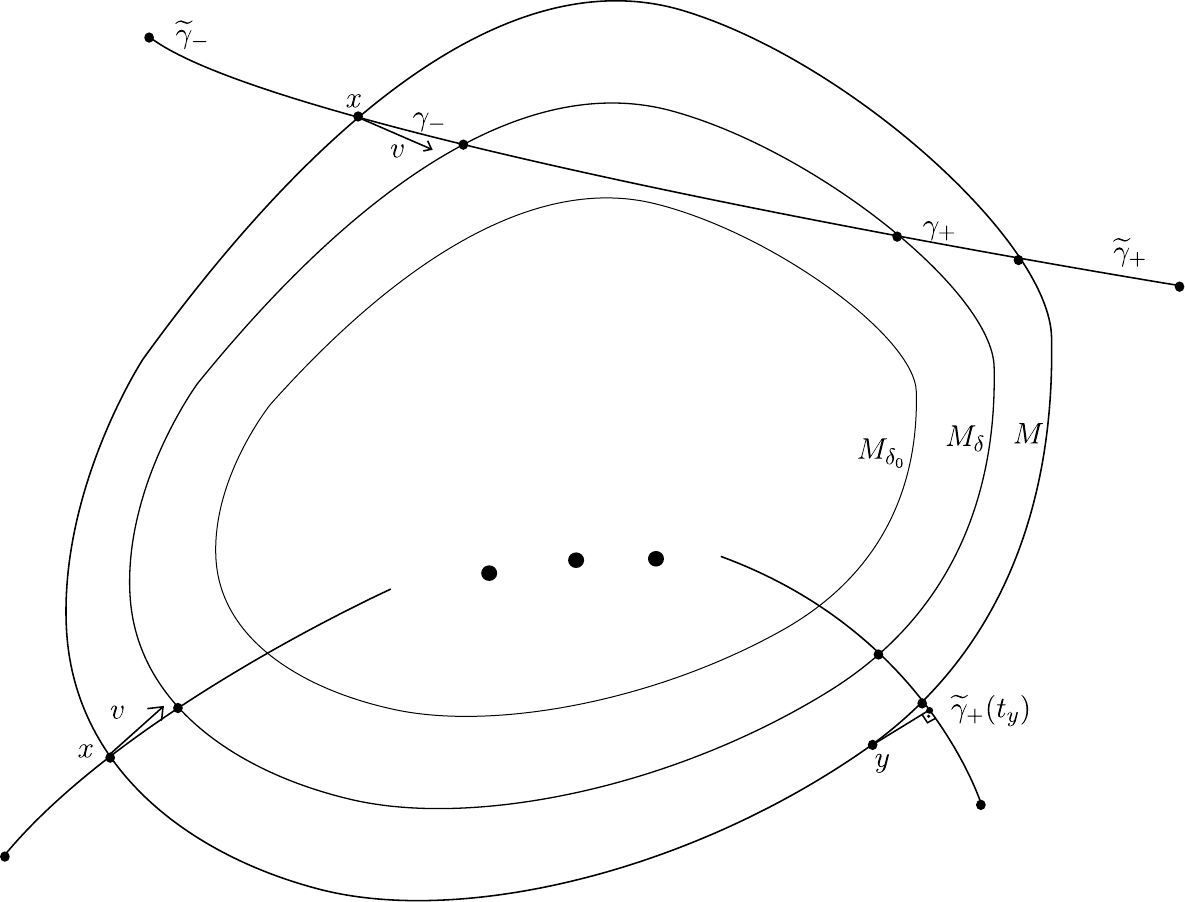
    \caption{A visualisation of the geometric construction of Proposition \ref{prop:boundary-estimate+geometry}. The picture emphasises that there are two kinds of geodesics: short geodesics (the upper one) and geodesics visiting the interior of $M_{\delta_0}$ (the lower one). For simplicity, we write $(x, v)$ for the incoming data of both of them, as well as $\widetilde{\gamma}_{\pm}$ for the exterior segments.}
    \label{figure}
\end{figure}

We note that in theory $\gamma$ could self-intersect near or on the boundary $\partial M$. This is not an issue here because $\gamma \cap M \setminus M_\delta = \gamma_- \cup \gamma_+$ splits as a union of two geodesic segments, each of which without self-intersections. Also, note that the maximal non-oriented geodesic $\widetilde{\gamma}$ has two intersections with $\partial M$ and is thus generated by both $(x, v)$ and by $-S_g(x, v)$; we must have $(x, v), -S_g(x, v) \in \mc{G}_\varepsilon$ as otherwise $\widetilde{\gamma}$ would not have any intersection with $\partial M_{\delta_0}$ by construction.

\begin{proof}
    Let
    \[
        I := \min \{-g_x(v, \nu(x)) \mid (x, v) \in \partial_-SM\,\,\mathrm{s.t.}\,\, \gamma_{x, v}\quad \mathrm{intersects}\quad \partial M_{\delta_0}\}.
    \]
    Note that $I > 0$ since for $v$ forming a small angle with $\partial M$, $\gamma_{x, v}$ is contained in the collar $M \setminus M_{\delta_0}$ by strict convexity. Choose $\varepsilon > 0$ such that $I > \varepsilon$. Note that by construction, if $(x, v) \not \in \mc{G}_\varepsilon$, then $\gamma_{x, v}$ is contained in the collar $M \setminus M_{\delta_0}$. Note also that any maximal non-tangential geodesic $\gamma$ in $M_{\delta_0}$ is extended to a maximal non-tangential geodesic $\widetilde{\gamma}$ in $M$, thanks to the strict convexity assumption, see \eqref{eq:strictly-geodesic-convex} above. The decomposition of $\widetilde{\gamma}$ in the collar also follows, as well that $\widetilde{\gamma}$ is generated by some $(x, v) \in \mc{G}_\varepsilon$, as required.

    Let 
    \[
        \delta_1 := \inf_{(x, v) \in \mc{G}_{\varepsilon}} \sup_{t \in [0, \ell_{\gamma}]} d(\gamma_{x, v}(t), \partial M).
    \]
    Note that $\delta_1 > 0$ by strict convexity of the boundary and the assumption on the angle of $v$ with $\partial M$. Let  $\delta \in (0, \min(\delta_0, \delta_1))$. By construction and by strict convexity (see \eqref{eq:strictly-geodesic-convex}), it follows that for any $(x, v) \in \mc{G}_\varepsilon$, and any $\eta \in [0, \delta]$, $\gamma_{x, v}$ intersects $\partial M_\eta$ exactly twice, proving the second bullet-point (see Figure \ref{figure}). For the third and final bullet-point,  simply notice that by strict convexity, if the angle a geodesic forms with boundary is sufficiently small, the geodesic stays in an arbitrarily small neighbourhood of the boundary (see \eqref{eq:strictly-geodesic-convex}) contradicting the second bullet-point. Thus there is a positive lower bound for the exit angle. 
    
    We now move to the construction of $u_\pm$. Assume $C > 0$ in Theorem \ref{thm:gbu-AC22} is large enough that $C^{-1} < \frac{\delta}{2}$, and let $\gamma = \gamma_{x_0, v_0}$ for some $(x_0, v_0) \in \mc{G}_\varepsilon$. Denote the two near-boundary geodesic segments of $\gamma$ by $\gamma_\pm$ (see Figure \ref{figure}), i.e.
    \[
        \gamma_+ \cup \gamma_- = \gamma \cap (M \setminus M_{\delta}).
    \]
    Embed $(M, g)$ into a closed Riemannian manifold, and extend $\gamma_{\pm}$ to the exterior of $M$ up to arc-length $\delta_2 > 0$, keeping them embedded; write $\widetilde{\gamma}_\pm$ for the obtained geodesic segments (see Figure \ref{figure}). Note that $\delta > 0$ can be chosen small enough so that the $\delta$-tubular neighbourhood of $\widetilde{\gamma}_-$ is embedded. Assume $x \in M \setminus M_{\delta/2}$ is in $\delta/2$-tubular neighbourhood of $\widetilde{\gamma}_-$, with the closest point $y \in \widetilde{\gamma}_-$ to $x$. Then by triangle inequality $d(y, \partial M) < \delta$ and so $y$ is outside $M_\delta$. The preceding discussion determines the $u_-$ part of the Gaussian beam in $M \setminus M_{\delta/2}$, and so the required expansion holds by inspecting the construction of Gaussian beams (see \cite[Proof of Theorem 6.2]{Ma_Sahoo_Salo} or \cite{St-Amant-24}). An analogous discussion is valid for $u_+$.

    Let us now verify the uniform boundary estimates. By the triangle inequality, it suffices to show the estimate for each $u_\pm$; let us show it for $u_-$, while the case of $u_+$ follows in the same way. In turn by triangle inequality it suffices to show that
    \begin{equation}\label{eq:GB-boundary-estimate}
        \lambda^{\frac{n-1}{2}} \int_{\partial M} |\chi_-|^p e^{-p\lambda \Im \Phi_-} |a_i^-|^p\, d\vol_g = \mc{O}(1),\quad i = 0, 1, \dotsc, N.
    \end{equation}
    We first claim that $\supp(\chi_-) \cap \partial M \subset B_{\partial M}(x_0, C\delta)$, for some uniform $C > 0$. Indeed, assume $\chi_-(y) \neq 0$ with $y \in \partial M$. Then $d(y, \widetilde{\gamma}_-) < \delta/2$; write $\widetilde{\gamma}_-(t_y)$ for the point which minimizes the distance from $y$ to $\widetilde{\gamma}_-$ (see Figure \ref{figure}). Note that by the preceding discussion $d(\widetilde{\gamma}_-(t_y), \partial M) < \delta$. By triangle inequality $d(y, x_0) < |t_y| + \delta/2$, so it suffices to show $|t_y| = \mc{O}(\delta)$. To see this, we compute
    \[
        \partial_t|_{t = 0}\rho(\gamma(t)) = \langle{\nabla \rho(x_0), v_0}\rangle = -\langle{\nu(x_0), v_0}\rangle > \varepsilon. 
    \]
    We extend $\rho$ as a smooth signed distance function to $\partial M$ (negative in the exterior). For the second derivative, we get from \eqref{eq:strictly-geodesic-convex} that
    \begin{equation}\label{eq:bounded}
        |\partial_t^2 \rho(\widetilde{\gamma}_-(t))| = |\langle{\nabla_{\dot{\widetilde{\gamma}}_-} \nabla \rho, \dot{\widetilde{\gamma}}_-(t)}\rangle| = |\nabla^2 \rho(\dot{\widetilde{\gamma}}_-(t), \dot{\widetilde{\gamma}}_-(t))| \leq \|\nabla^2 \rho\|_{C^0(\{|\rho| \leq \delta\})} =: C_0,
    \end{equation}
    as long as $d(\widetilde{\gamma}_-(t), \partial M) < \delta$. Integrating, we conclude that
    \begin{equation}\label{eq:angle-lower-bound}
        \partial_t \rho(\widetilde{\gamma}_-(t)) \geq \varepsilon - C_0|t|,\quad t \in I,
    \end{equation}
    where $I$ is the connected interval containing zero consisting of all $t$ such that $d(\widetilde{\gamma}_-(t), \partial M) < \delta$. Note that $t_y \in I$. We now choose $\delta > 0$ small enough such that the length of the geodesic segment of $\widetilde{\gamma}_- \cap \{|\rho| \leq \delta\}$ is bounded by $\frac{\varepsilon}{2C_0}$,
    i.e. so that the geodesic segments inside $\{|\rho| \leq \delta\}$ are small enough depending on the norm of $\nabla^2 \rho$. We then have by \eqref{eq:angle-lower-bound} that $\partial_t \rho(\widetilde{\gamma}_-(t)) \geq \frac{\varepsilon}{2}$ for $t \in I$, and so by integration
    \begin{equation}\label{eq:taylor-bound}
        \delta > |\rho(\widetilde{\gamma}_-(t_y))| = \left|\int_0^{t_y} \partial_s \rho(\widetilde{\gamma}_-(s))\, ds\right| > \frac{\varepsilon}{2} |t_y|
    \end{equation}
    which in particular implies that $|t_y| = \mc{O}(\delta)$, proving the original claim. (Note that the same argument also shows that $\ell(\gamma_-) = \mc{O}(\delta)$.)

    We record another consequence of \eqref{eq:taylor-bound}; we claim that
    \begin{equation}\label{eq:distance-equivalence}
        d_{\partial M}(y, x_0) \sim d(y, \widetilde{\gamma}_-(t_y)),\quad y \in \supp \chi_-,
    \end{equation}
    where $d_{\partial M}(\bullet, \bullet)$ denotes the distance along the boundary, and $\sim$ means uniformly equivalent. By Taylor's expansion, we have $d_{\partial M}(y, x_0) \sim d(y, x_0)$. Also, $d(y, \widetilde{\gamma}_-(t_y)) \leq d(y, x_0)$ by definition of $\widetilde{\gamma}_-(t_y)$. For the reverse inequality, we have
    \[
        d(y, x_0) \leq |t_y| + d(y, \widetilde{\gamma}_-(t_y)) < 2\varepsilon^{-1} |\rho(\widetilde{\gamma}_-(t_y))| + d(y, \widetilde{\gamma}_-(t_y)) \leq (1 + 2\varepsilon^{-1}) d(y, \widetilde{\gamma}_-(t_y)),
    \]
    where in the first estimate we used the triangle inequality, in the second one we used \eqref{eq:taylor-bound}, and in the last one we used that $|\rho(\widetilde{\gamma}_-(t_y))| \leq d(y, \widetilde{\gamma}_-(t_y))$ by definition of $\rho$. This proves the claim \eqref{eq:distance-equivalence}.

    We are now prepared to prove \eqref{eq:GB-boundary-estimate}. Indeed, we have
    \begin{align}\label{eq:usual-bound}
    \begin{split}
        \lambda^{\frac{n-1}{2}} &\int_{\partial M} |\chi_-|^p e^{-p\lambda \Im \Phi_-} |a_i|^p\, d\vol_g \\&\leq \mc{O}(1)\lambda^{\frac{n-1}{2}} \int_{B_{\partial M}(x_0, \mc{O}(\delta))} e^{-C\,\lambda p d_{\partial M}(x, x_0)^2}d\vol_g(x)
        = \mc{O}(1),
    \end{split}
    \end{align}
    where in the inequality we used that $a_i$ and $\chi_-$ are uniformly bounded, and that the phase satisfies $\Im \Phi_- \geq Cd(y, \widetilde{\gamma}_-)^2$ for some uniform $C > 0$ (see Proposition \ref{prop:phase-proposition} below), as well as \eqref{eq:distance-equivalence}. The passage to $\mc{O}(1)$ is justified by going to local normal geodesic coordinates based at $x_0 \in \partial M$ and a linear change of variables. This completes the proof of the first item.
    
    For the estimate $\|\partial_\nu u\|_{L^p(\partial M)} = \mc{O}(\lambda)$ we argue in the same way by expanding the terms; here we also make use of the fact that $\|\Phi_\pm\|_{C^1} = \mc{O}(1)$. The proof of the second item also follows from the same consideration. This completes the argument.
\end{proof}

We end this section with an additional auxiliary statement about the phase functions $\Phi^\pm$ and their non-degeneracy.

    \begin{proposition}\label{prop:phase-proposition}
        Let $(M, g)$, $\gamma = \gamma_{x_-, v_-}$ for some $(x_-, v_-) \in \mc{G}_{\varepsilon}$ be as in Proposition \ref{prop:boundary-estimate+geometry}, and write $u_\pm, \Phi^\pm$ for the Gaussian Beam approximate solution along $\gamma$. Write $S_g(x_-, v_-) = (x_+, v_+)$. Then there exists $C = C(g) >1$, such that 
        \[
            C d_{\partial M}(x, x_\pm)^2 \geq \Im \Phi^\pm(x) \geq C^{-1} d_{\partial M}(x, x_\pm)^2,\,\, x \in \supp \chi^\pm \cap \partial M. 
        \]
        Moreover, 
        \[
            \nabla^2_{\partial M} \Im \Phi^\pm(x) \geq C^{-1} g(x)|_{T\partial M},\quad x \in \supp (\chi^\pm) \cap \partial M,
        \]
        and by possibly further uniformly reducing $\delta > 0$ we may assume that $x_\pm$ is the only critical point of $\Im \Phi^\pm$ in $\supp(\chi^\pm) \cap \partial M$.
    \end{proposition}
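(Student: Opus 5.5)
We work in a neighbourhood of the exit point; the entry point is handled identically with $\widetilde\gamma_-$ in place of $\widetilde\gamma_+$. The idea is to Taylor expand $\Im\Phi^+$ around the extended geodesic segment $\widetilde\gamma_+$ and restrict the expansion to $\partial M$, using that $\widetilde\gamma_+$ crosses $\partial M$ transversally at $x_+$ with angle bounded below by $\varepsilon_+$ (Proposition \ref{prop:boundary-estimate+geometry}).

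\emph{Ambient bounds.} By Theorem \ref{thm:gbu-AC22} we have $\Im\Phi^+(\widetilde\gamma_+(t))=0$ and $\nabla\Im\Phi^+(\widetilde\gamma_+(t))=0$, since $\nabla\Phi^+=\dot\gamma$ is real. The Hessian $\nabla^2\Im\Phi^+(\widetilde\gamma_+(t))=\Im H(t)^\flat$ is $\geq 0$, and it is $\geq C^{-1}g$ on $\dot\gamma^\perp$. Since $\Im H^\flat\geq 0$ and $\Im H^\flat(\dot\gamma,\cdot)=\nabla_{\dot\gamma}\nabla\Im\Phi^+=0$ along the curve, $\Im H^\flat$ is block diagonal with respect to $\mathbb{R}\dot\gamma\oplus\dot\gamma^\perp$. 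In Fermi coordinates $(t,z)$ around $\widetilde\gamma_+$, Taylor's formula with the uniform $C^3$ bound on $\Phi^+$ then gives
\[
    C^{-1}|z|^2\leq \Im\Phi^+(t,z)\leq C|z|^2
\]
for $|z|$ small. Here $|z|=d(y,\widetilde\gamma_+)$, and $\delta$ is shrunk uniformly so that $\supp\chi^+$ lies in this region. Combined with the equivalence \eqref{eq:distance-equivalence} (proved there for $\widetilde\gamma_-$, and identically for $\widetilde\gamma_+$), this yields the two-sided bound $\Im\Phi^+(y)\sim d_{\partial M}(y,x_+)^2$ on $\supp\chi^+\cap\partial M$.

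\emph{Boundary Hessian.} At $x_+$ we have $d\Im\Phi^+(x_+)=0$, so the boundary Hessian equals the restriction $\nabla^2\Im\Phi^+(x_+)|_{T_{x_+}\partial M}$; the second fundamental form term vanishes because the gradient vanishes. For $w\in T_{x_+}\partial M$, write $w=a v_++w^\perp$. Because $g(v_+,\nu)>\varepsilon_+$, we get $|w^\perp|\geq c(\varepsilon_+)|w|$. Hence
\[
    \nabla^2\Im\Phi^+(w,w)=\Im H^\flat(w^\perp,w^\perp)\geq C^{-1}c^2|w|^2.
\]
The $C^3$ bound on $\Im\Phi^+$ and the $C^1$ bound on the geometry of $\partial M$ then propagate this positivity to a uniform ball $B_{\partial M}(x_+,r)$. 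Shrinking $\delta$ uniformly forces $\supp\chi^+\cap\partial M\subset B_{\partial M}(x_+,C\delta)$, by the argument given for $\chi_-$ in the proof of Proposition \ref{prop:boundary-estimate+geometry}. This proves the Hessian lower bound on $\supp\chi^+\cap\partial M$.

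\emph{Uniqueness of the critical point.} On a geodesically convex boundary ball, a function whose Hessian is uniformly positive definite is strictly convex along boundary geodesics. It therefore has at most one critical point, and $x_+$ is one.

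\emph{Main obstacle.} The delicate step is uniformity. All constants depend only on $\varepsilon$, on $C^k$ bounds for $\Phi^\pm$ and $\rho$ from Theorem \ref{thm:gbu-AC22}, and on $\varepsilon_+$. In particular, the radius of positivity of the boundary Hessian must be shown to be independent of $(x_-,v_-)\in\mc G_\varepsilon$. This requires tracking the $C^3$ norm of $\Phi^\pm$ (which controls the Taylor remainder) against the lower bound $C^{-1}c(\varepsilon_+)^2$, and choosing $\delta$ so that the $\mc O(\delta)$-sized boundary support sits inside that radius.
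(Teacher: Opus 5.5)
Your proposal is correct and follows essentially the same route as the paper. For the two-sided bound, you use a Taylor expansion of $\Im\Phi^\pm$ in Fermi coordinates about the extended segment together with \eqref{eq:distance-equivalence}. For the boundary Hessian, you use the vanishing of $d\Im\Phi^+(x_+)$ to remove the second fundamental form term, the identity $\nabla^2\Im\Phi^+(\dot\gamma,\cdot)=0$, and the decomposition $w=av_++w^\perp$ with $|a|\le\varepsilon_+^{-1}|w^\perp|$, followed by Taylor propagation away from $x_+$.

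There are only two minor differences. You make the uniqueness of the critical point explicit via strict convexity on a small geodesically convex boundary ball, whereas the paper simply says it follows after shrinking the support. You also take the vanishing and positivity properties of the phase on the exterior extension $\widetilde\gamma_\pm$ directly from Theorem \ref{thm:gbu-AC22}. That theorem only covers $t\in[0,\ell_g(x,v)]$, so for the extension the paper cites \cite[Lemma 6.3]{Ma_Sahoo_Salo}.
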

    \begin{proof}
        For the first claim, let us consider the case of $\Phi^{-}$ without loss of generality; $\Phi^+$ is treated in the same way. We first recall that $\gamma_-$ denotes the (initial) segment of $\gamma$ in $M\setminus M_{\delta}$, and $\widetilde{\gamma}_-$ denotes a slight extension of $\gamma_-$ to the exterior of $M$ (embedded in a closed Riemannian manifold), see the proof of Proposition \ref{prop:boundary-estimate+geometry}. By Theorem \ref{thm:gbu-AC22}, 
        \[
            \nabla \Im \Phi^-|_{\gamma} = 0,\quad \nabla^2 \Im \Phi^-|_{\dot{\gamma}^\perp} \geq C^{-1} g|_{\dot{\gamma}^\perp},
        \]
        and in fact this also holds on $\widetilde{\gamma}_-$, see \cite[Lemma 6.3]{Ma_Sahoo_Salo}. By Taylor's expansion, and by going to Fermi's coordinates near $\widetilde{\gamma}_-$, we immediately get that $\Im \Phi^-(x) \geq C d(x, \widetilde{\gamma}_-)^2$ for $x$ sufficiently uniformly close to $\gamma$. After possibly uniformly decreasing the support of $\chi^\pm$, this implies the lower bound in the statement by applying \eqref{eq:distance-equivalence} (the proof of which uses that the geodesics we consider are non-tangential to the boundary in a uniform way). The upper bound follows by the uniform bound on the derivatives of degree $\leq 2$. This completes the proof of the first claim.

        We now prove the statement on the Hessian of $\Im \Phi^\pm$. Again, without loss of generality we prove the statement for $\Phi^+$; the case of $\Phi^-$ is treated in the same way. Since $\Phi^+$ has uniformly bounded derivatives, by Taylor's expansion it suffices to prove the estimate for $x = x_+$ for some uniform constant $C > 0$. The statement about uniqueness of the critical point follows, after possibly uniformly decreasing the support of $\chi^\pm$. We will use the following elementary formula valid for vector fields $X$ and $Y$ parallel to the boundary, and a function $f \in C^\infty(M)$,
        \[
            \nabla^2_{\partial M}f(X, Y) = \langle{(\nabla_{\partial M})_X \nabla_{\partial M} f, Y}\rangle = \langle{\nabla_X (\nabla f - (\partial_\nu f) \nu), Y}\rangle = \nabla^2 f(X, Y) - \partial_{\nu} f \mathbf{II}(X, Y),
        \]
        where in the second equality we used that the restriction covariant derivative is given by the orthogonal projection of $\nabla_X$, and in the last equality the definition of the second fundamental form. We will apply this with $f = \Im \Phi^+$, and use that $\partial_{\nu} \Im \Phi^+(x_+) = 0$. Note that we also have
        \begin{equation}\label{eq:zero-along-geodesic}
              \nabla^2 \Im \Phi^+(\dot{\gamma}, \bullet) = \langle{\nabla_{\dot{\gamma}} \nabla \Im \Phi^+, \bullet}\rangle = 0,  
        \end{equation}
        since $\nabla \Im \Phi^+|_{\gamma} = 0$ by Theorem \ref{thm:gbu-AC22}.
        
        Let $v \in T_{x_+} \partial M$ and write $v = av_+ + w$, where $a \in \mathbb{R}$ and $w \in (v_+)^\perp$. By definition $v$ is orthogonal to $\partial_\nu$, so
        \[
            a = -\frac{\langle{w, \nu}\rangle}{\langle{v_+, \nu}\rangle} \implies |a| \leq (\varepsilon_+)^{-1} |w|,
        \]
        where we used that $\langle{v_+, \nu}\rangle > \varepsilon_+$ thanks to Proposition \ref{prop:boundary-estimate+geometry}. It follows in particular that 
        \[
            |v|^2 = a^2 + |w|^2 \leq (1 + (\varepsilon_+)^{-2}) |w|^2.
        \]
        We may now compute
        \[
            \nabla^2_{\partial M} \Im \Phi^+ (v, v) = \nabla^2 \Im \Phi^+(v, v) = \nabla^2 \Im \Phi^+(w, w) \geq C|w|^2 \geq \frac{C}{1 + (\varepsilon_+)^{-2}} |v|^2,
        \]
        where in the second equality we used \eqref{eq:zero-along-geodesic}, and in the first inequality Theorem \ref{thm:gbu-AC22}. This completes the proof.
    \end{proof}

\section{Main proof: the three-solution strategy}\label{section:three-solution}

The section is devoted to the proof of Theorem \ref{thm_main_1}.

\begin{proof}[Proof of Theorem \ref{thm_main_1}]
We divide the proof into several steps.
\medskip

\emph{Step 1: boundary determination and extension.} Since by assumption $\Lambda_{g_1}^\lambda = \Lambda_{g_2}^\lambda$, by \cite[Proposition 1.3]{LeeUhlmann1989}, when $n \geq 3$ in boundary normal coordinates the full Taylor series of $g_1$ and $g_2$ agree. (This is unaffected by the $\lambda^2$ factor, see also \cite{DKSU}.) Let $\psi(x)$ be defined for $x$ in a sufficiently small neighbourhood of $\partial M$ as follows: follow the unique shortest geodesic $\gamma_1$ (with respect to $g_1$) orthogonal to $\partial M$ for time $t = d_{g_1}(x, \partial M)$, in the direction of $\partial M$, and then follow the unique geodesic $\gamma_2$ orthogonal to $\partial M$ (with respect to $g_2$) back to the interior for time $t$. This defines a diffeomorphism in a neighbourhood of the boundary, such that $\psi|_{\partial M} = \mathrm{Id}$, and $\psi^*g_2$ and $g_1$ agree to infinite order at $\partial M$ by the previous discussion (see also \cite[Proof of Lemma 2.1]{LeeUhlmann1989}). This diffeomorphism can be extended to a diffeomorphism of $M$, so we may assume that $g_1$ and $g_2$ have equal jets at the boundary. Note that $\partial M$ is still strictly convex with respect to $\psi^*g_2$, and $\psi^*g_2$ is non-trapping. When $n = 2$, we may simply take $\psi = \mathrm{Id}$ by the assumption of theorem. From now on, for simplicity we write $g_2$ instead of $\psi^*g_2$. 

We would now like to extend $(M, g_i)$ for $i = 1, 2$ so that the metrics agree. Observe first of all that the non-trapping assumption and strict convexity of the boundary are stable under $C^\infty$ perturbation (and even $C^2$ perturbation), see \cite{PSU_book}. Therefore, for $i = 1, 2$, $(M, g_i)$ admits an extension to non-trapping $(\widetilde{M}, \widetilde{g}_i)$ with strictly convex boundary, such that
\[
    \widetilde{g}_1|_{\widetilde{M} \setminus M} = \widetilde{g}_2|_{\widetilde{M} \setminus M},
\]
and moreover, we may assume that the equidistant hypersurfaces $\partial \widetilde{M}_{\eta}$ at distance $\eta \geq 0$ from $\partial \widetilde{M}$ are strictly convex for $\eta \in [0, \delta_0]$, for some $\delta_0 > 0$, such that $\partial \widetilde{M}_{\delta_0} = \partial M$. We are thus in a situation where Proposition \ref{prop:boundary-estimate+geometry} applies, and let $\varepsilon > 0$, $\delta \in (0, \delta_0)$ be as in the statement of that proposition so that its conclusions work for both $(\widetilde{M}, \widetilde{g}_1)$ and $(\widetilde{M}, \widetilde{g}_2)$ (just take the minimum of the respective constants for $\widetilde{g}_1$ and $\widetilde{g}_2$). Since $\widetilde{g}_1$ and $\widetilde{g}_2$ agree near $\partial \widetilde{M}$, and $g_1$ and $g_2$ have equal $C^\infty$-jets at $\partial M$, we may write $\partial_\pm S\widetilde{M}$ and $\partial_\pm SM$ for the respective common incoming and outgoing boundaries. We recall the notation
\[
    \mc{G}_\varepsilon := \{(x, v) \in \partial_- S\widetilde{M} \mid -(\widetilde{g}_1)_x(v, \nu(x)) > \varepsilon\}.
\]

We claim that 
\[
    (\ell_{\widetilde{g}_1}, S_{\widetilde{g}_1}) = (\ell_{\widetilde{g}_2}, S_{\widetilde{g}_2}) \quad \mathrm{on}\quad \mc{G}_\varepsilon \implies (\ell_{g_1}, S_{g_1}) = (\ell_{g_2}, S_{g_2}).
\]
Indeed, let $(x, v) \in \partial_-SM$. By Proposition \ref{prop:boundary-estimate+geometry}, for $i = 1, 2$, $\gamma_{x, v}^{g_i}$ (the $g_i$-geodesics generated by $(x, v)$) extends to a maximal non-tangential geodesic $\gamma_{(\widetilde{x}_i, \widetilde{v}_i)}^{\widetilde{g}_i}$ in $\widetilde{M}$, where $(\widetilde{x}_i, \widetilde{v}_i) \in \mc{G}_\varepsilon$. Moreover, the geodesic $\gamma_{(\widetilde{x}_i, \widetilde{v}_i)}^{\widetilde{g}_i}$ is a concatenation of two geodesic segments in the collar $\widetilde{M} \setminus M$ and $\gamma^{g_i}_{x, v}$. Since $\widetilde{g}_1 = \widetilde{g}_2$ in $\widetilde{M} \setminus M$, the incoming geodesic segments for $\widetilde{g}_1$ and $\widetilde{g}_2$ agree; denote them by $\gamma_{\mathrm{in}}$. It follows that $(\widetilde{x}_1, \widetilde{v}_1) = (\widetilde{x}_2, \widetilde{v}_2) =: (\widetilde{x}, \widetilde{v})$. As $S_{\widetilde{g}_1}(\widetilde{x}, \widetilde{v}) = S_{\widetilde{g}_2}(\widetilde{x}, \widetilde{v})$, the outgoing segments of $\gamma_{(\widetilde{x}, \widetilde{v})}^{\widetilde{g}_1}$ and $\gamma_{(\widetilde{x}, \widetilde{v})}^{\widetilde{g}_2}$ also agree, since again $\widetilde{g}_1 = \widetilde{g}_2$ in the collar; denote this segment by $\gamma_{\mathrm{out}}$. Following $\gamma_{\mathrm{out}}$ backwards to $\partial M$, we conclude that $\gamma_{(\widetilde{x}, \widetilde{v})}^{\widetilde{g}_1}$ and $\gamma_{(\widetilde{x}, \widetilde{v})}^{\widetilde{g}_2}$ end at the same point and direction; this proves that $S_{g_1}(x, v) = S_{g_2}(x, v)$ as required. Since
\[
    \ell_{g_1}(x, v) = \ell_{\widetilde{g}_1}(\widetilde{x}, \widetilde{v}) - \ell_{\widetilde{g}_1}(\gamma_{\mathrm{in}}) - \ell_{\widetilde{g}_1}(\gamma_{\mathrm{out}}) = \ell_{\widetilde{g}_2}(\widetilde{x}, \widetilde{v}) - \ell_{\widetilde{g}_1}(\gamma_{\mathrm{in}}) - \ell_{\widetilde{g}_1}(\gamma_{\mathrm{out}}) = \ell_{g_2}(x, v),
\]
we have that the travel times also agree, proving the claim. 

We note that if $(x, v) \in \partial_-S\widetilde{M} \setminus \mc{G}_\varepsilon$, by construction the geodesics $\gamma^{\widetilde{g}_1}_{x, v}$ and $\gamma^{\widetilde{g}_2}_{x, v}$ stay entirely in the collar $\widetilde{M} \setminus M$, and so they agree since $\widetilde{g}_1 = \widetilde{g}_2$ there. Therefore $(\ell_{\widetilde{g}_1}, S_{\widetilde{g}_1})(x, v) = (\ell_{\widetilde{g}_2}, S_{\widetilde{g}_2})(x, v)$. Therefore 
\begin{equation}\label{eq:equivalence-partial-scattering-data}
    (\ell_{\widetilde{g}_1}, S_{\widetilde{g}_1}) = (\ell_{\widetilde{g}_2}, S_{\widetilde{g}_2}) \quad \mathrm{on}\quad \mc{G}_\varepsilon \iff (\ell_{\widetilde{g}_1}, S_{\widetilde{g}_1}) = (\ell_{\widetilde{g}_2}, S_{\widetilde{g}_2}).
\end{equation}

From now on, for $i = 1, 2$, for simplicity we will write $(M, g_i)$ for $(\widetilde{M}, \widetilde{g}_i)$, with the aim of proving that the lens data agree on $\mc{G}_\varepsilon = \{(x, v) \in \partial_-SM \mid -g_x(v, \nu(x)) > \varepsilon\}$. By the previous paragraphs, this is sufficient to prove the theorem.

    \medskip

    \emph{Step 2: construction of suitable solutions and the setup.} Let $(x_0, v_0) \in \mc{G}_\varepsilon$, and denote $(x_i, v_i) := S_{g_i}(x_0, v_0)$ for $i = 1, 2$. Write $\gamma_i$ for the corresponding geodesics, and write $\gamma^\pm_i$ for the incoming and outgoing segments of $M \setminus M_{\delta} \cap \gamma_i$; note that $\gamma_1^- = \gamma_2^-$ since $g_1 = g_2$ there. Let $u_i$ be the Gaussian beam approximate solutions constructed in Theorem \ref{thm:gbu-AC22}; we take $p = 2$ and $K$ sufficiently large to be determined later. We also assume that the initial data from the construction of $u_i$ are the same at $(x_0, v_0)$.

    Similarly, we let $u$ be a (backwards oriented) Gaussian beam approximate solution with respect to $g_2$ whose initial data we control at $x_2$. According to Proposition \ref{prop:boundary-estimate+geometry}, for $i = 1, 2$ we may write
    \begin{align}\label{eq:GB-expansion}
    \begin{split}
        u_i|_{M \setminus M_{\delta/2}} &= u_i^- + u_i^+,\quad u_i^\pm = \lambda^{\frac{n - 1}{4}} e^{\I \lambda \Phi_i^\pm} a^{(i),\pm} \chi_i^\pm,\quad a^{(i),\pm} = \sum_{j = 0}^{N} a_j^{(i),\pm} \lambda^{-j},\\
        u|_{M \setminus M_{\delta/2}} &= u^- + u^+,\quad u^\pm = \lambda^{\frac{n - 1}{4}} e^{\I \lambda \Phi_2^\mp} a^{\pm} \chi_2^\mp,\quad a^{\pm} = \sum_{j = 0}^{N} a_j^{\pm} \lambda^{-j},
    \end{split}
    \end{align}
    where the cutoff functions $\chi_i^\pm$ are real-valued, with values in $[0, 1]$, supported in a $\delta_{\mathrm{GB}}$-neighbourhood of $\gamma^\pm_i$, and are equal to $1$ in a uniformly smaller neighbourhood of $\gamma^\pm_i$; here $\delta_{\mathrm{GB}} \in (0, \delta/2)$. (Note the discrepancy between the $\pm$ signs in the definition of $u^\pm$; this is because, by construction, we take the phase defining $u$ to be $\Phi_2$, but we reverse the direction of the geodesic $\gamma_2$.) Here $N = N(K)$ is sufficiently large depending on $K$. Note that in fact 
    \begin{equation}\label{eq:incoming-solution}
        u_1^-|_{M \setminus M_{\delta/2}} \equiv u_2^-|_{M \setminus M_{\delta/2}},\quad \chi_1^- \equiv \chi_2^-,
    \end{equation}
    since $u_i^-$ are supported in a $\delta_{\mathrm{GB}}$-neighbourhood of $\gamma_1^- = \gamma_2^-$, and they are determined uniquely from the metric and initial conditions, while the metrics agree on $M\setminus M_{\delta_0}$ by assumption (see also the proof of Proposition \ref{prop:boundary-estimate+geometry}).

    By Theorem \ref{thm:gbu-AC22} (note the shift in $K$ compared to the statement of the theorem),
    \begin{equation}\label{eq:Gaussian-Beam-approximate-solution}
        (-\Delta_{g_i} - \lambda^2) u_i = \mc{O}_{L^2}(\lambda^{- K}),\quad (-\Delta_{g_2} - \lambda^2)u = \mc{O}_{L^2}(\lambda^{- K}).
    \end{equation}
    We note that the norms in Proposition \ref{prop_resolvent_estimate} are defined using the metric. However, we here take Sobolev spaces to be defined using an arbitrary background metric; by compactness all such choices have equivalent norms and the implied constants depend only of $g_1$ and $g_2$. The exact solutions are constructed by solving the following equations for $i = 1, 2$
    \begin{align*}
        (-\Delta_{g_i} - \lambda^2) r_i = -(-\Delta_{g_i} - \lambda^2) u_i,\quad (-\Delta_{g_2} - \lambda^2) r = -(-\Delta_{g_2} - \lambda^2) u.
    \end{align*}
    By applying the resolvent estimate, see Proposition \ref{prop_resolvent_estimate}, and using \eqref{eq:Gaussian-Beam-approximate-solution}, we get for $i = 1, 2$ that

    \begin{equation}\label{eq:H^2-remainder}
        r_i = \mc{O}_{H^2}(\lambda^{1 - K}),\quad  r = \mc{O}_{H^2}(\lambda^{1 - K}),
    \end{equation}
    as well as that
    \[
        r_i = \mc{O}_{H^1}(\lambda^{-K}),\quad r = \mc{O}_{H^1}(\lambda^{-K}).
    \]
    By the trace theorem, as a  consequence we get for $i = 1, 2$ that
        \begin{align}\label{eq:remainder-estimate}
    \begin{split}
        &\|r_i|_{\partial M}\|_{L^2(\partial M)} \leq C \|r_i\|_{H^1(M)} = \mc{O}(\lambda^{- K}),\quad \|r|_{\partial M}\|_{L^2(\partial M)} = \mc{O}(\lambda^{- K}),\\
        &\|\partial_\nu r_i|_{\partial M}\|_{L^2(\partial M)} \leq C \|r_i\|_{H^2(M)} = \mc{O}(\lambda^{1 - K}),\quad \|\partial_\nu r|_{\partial M}\|_{L^2(\partial M)} = \mc{O}(\lambda^{1 - K}).
    \end{split}
    \end{align}
    \medskip

    \emph{Step 3: integration over $\partial M$.} By Proposition \ref{prop:extension-2}, we get that
        \begin{align*}
        &\langle{ \partial_\nu(u_1 - u_2), u}\rangle_{L^2(\partial M)} + \underbrace{\langle{ \partial_\nu(r_1 - r_2), u}\rangle_{L^2(\partial M)}}_{= \mc{O}(\lambda^{1 - K})} + \underbrace{\langle{ \partial_\nu((u_1 + r_1) - (u_2 + r_2)), r}\rangle_{L^2(\partial M)}}_{= \mc{O}(\lambda^{1 - K})}\\
        &=\langle{ \partial_\nu(u_1 + r_1) - \partial_\nu(u_2 + r_2), u + r}\rangle_{L^2(\partial M)}\\ 
        &=  \langle{u_1 + r_1 - (u_2 + r_2), \partial_\nu(u + r)}\rangle_{L^2(\partial M)}\\ 
        &= \langle{u_1 - u_2, \partial_\nu u}\rangle_{L^2(\partial M)} + \underbrace{\langle{r_1 - r_2, \partial_\nu u}\rangle_{L^2(\partial M)}}_{= \mc{O}(\lambda^{1 - K})} + \underbrace{\langle{u_1 + r_1 - (u_2 + r_2), \partial_\nu r}\rangle_{L^2(\partial M)}}_{=\mc{O}(\lambda^{1 - K})},
    \end{align*}
    where the terms involving the corrections to the approximate Gaussian Beams are estimated by Cauchy-Schwarz, and by Proposition \ref{prop:boundary-estimate+geometry} as well as \eqref{eq:remainder-estimate}. We thus get
    \[
        \langle{\partial_\nu(u_1 - u_2), u}\rangle_{L^2(\partial M)} - \langle{u_1 - u_2, \partial_\nu u}\rangle_{L^2(\partial M)} = \mc{O}(\lambda^{1 - K}).
    \]
    Using \eqref{eq:incoming-solution}, i.e. that $u_1^-$ and $u_2^-$ agree, we deduce
    \begin{equation}\label{eq:working-identity-1}
        \langle{\partial_\nu(u_1^+ - u_2^+), u}\rangle_{L^2(\partial M)} - \langle{u_1^+ - u_2^+, \partial_\nu u}\rangle_{L^2(\partial M)} = \mc{O}(\lambda^{1 - K}).
    \end{equation}
    According to $u = u^- + u^+$ we then deal with the corresponding terms arising from \eqref{eq:working-identity-1}.    
    \medskip
    
    \emph{Step 4: the principal term.} We now determine the principal term in \eqref{eq:working-identity-1}, that is the one obtained from the interaction of $u_i^+$ with $u^-$. The other term is treated below, and is shown to be of lower order.
    
    According to \eqref{eq:backscattering-lower-order} below and \eqref{eq:working-identity-1}, when $K \geq 1$ we have
    \begin{equation}\label{eq:working-identity-1'}
        \langle{\partial_\nu(u_1^+ - u_2^+), u^-}\rangle_{L^2(\partial M)} - \langle{u_1^+ - u_2^+, \partial_\nu u^-}\rangle_{L^2(\partial M)} = \mc{O}(\lambda^{\frac{1}{2}}).
    \end{equation}
    Let us now separate the top degree terms in \eqref{eq:working-identity-1'}. Using \eqref{eq:GB-expansion}, we get for $i = 1, 2$ that
    \begin{align}\label{eq:normal-derivative-expansion}
    \begin{split}
        \partial_\nu u_i^+ &= \lambda^{\frac{n - 1}{4}} e^{\I \lambda \Phi_i^+}\left( (\I \lambda \partial_\nu \Phi_i^+ \chi_i^+ + \partial_\nu \chi_i)\left(\sum_{j = 0}^N a_j^{(i), +} \lambda^{-j}\right) + \chi_i^+ \left(\sum_{j = 0}^N \partial_\nu a_j^{(i), +} \lambda^{-j}\right)\right)\\ 
        &= \lambda^{\frac{n - 1}{4}} e^{\I \lambda \Phi_i^+} a_0^{(i), +} \I \lambda \partial_\nu \Phi_i^+ \chi_i^+ + R_i^+,\\
        \partial_\nu u^- &= \lambda^{\frac{n - 1}{4}} e^{\I \lambda \Phi_2^+}\left( (\I \lambda \partial_\nu \Phi_2^+ \chi_2^+ + \partial_\nu \chi_2^+)\left(\sum_{j = 0}^N a_j^- \lambda^{-j}\right) + \chi_2^+ \left(\sum_{j = 0}^N \partial_\nu a_j^- \lambda^{-j}\right)\right)\\
        &=\lambda^{\frac{n - 1}{4}} e^{\I \lambda \Phi_2^+} a_0^- \I \lambda \partial_\nu \Phi_2^+ \chi_2^+ + R^-,
    \end{split}
    \end{align}
    where $R^-$ and $R_i^+$ are implicitly defined. Using Proposition \ref{prop:boundary-estimate+geometry} to treat the remainder terms, from \eqref{eq:working-identity-1'} we get that
    \begin{align*}
        \mc{O}(\lambda^{\frac{1}{2}}) &= \I \lambda^{\frac{n-1}{2} + 1} \int_{\partial M} (e^{\I \lambda \Phi_1^+} a_0^{(1), +} \partial_\nu \Phi_1^+ \chi_1^+ - e^{\I \lambda \Phi_2^+} a_0^{(2), +} \partial_\nu \Phi_2^+ \chi_2^+) e^{-\I \lambda \overline{\Phi}_2^+} \overline{a}_0^- \chi_2^+\, d\vol\\ 
        &+ \I \lambda^{\frac{n-1}{2} + 1} \int_{\partial M} (e^{\I \lambda \Phi_1^+} a_0^{(1), +} \chi_1^+ - e^{\I \lambda \Phi_2^+} a_0^{(2), +} \chi_2^+) e^{-\I \lambda \overline{\Phi}_2^+} \partial_\nu \overline{\Phi}_2^+ \overline{a}_0^- \chi_2^+\, d\vol.
    \end{align*}
    Collecting the terms with the same phase we get
    \begin{align}\label{eq:working-identity-2}
    \begin{split}
        \mc{O}(\lambda^{-\frac{1}{2}}) &= \underbrace{2\lambda^{\frac{n-1}{2}} \int_{\partial M} e^{-2\lambda \Im \Phi_2^+} a_0^{(2), +} \overline{a}_0^- \Re(\partial_{\nu}\Phi_2^+) (\chi_2^+)^2
\, d\vol}_{I_2:=}\\
        &- \underbrace{\lambda^{\frac{n-1}{2}} \int_{\partial M} e^{\I \lambda(\Phi_1^+ - \overline{\Phi}_2^+)} a_0^{(1), +} \overline{a}_0^- (\partial_\nu \overline{\Phi}_2^+ + \partial_\nu \Phi_1^+) \chi_1^+ \chi_2^+\, d\vol}_{I_1 :=}.
    \end{split}
    \end{align}
    \medskip

    \emph{Step 5: ``backscattering" is of lower order.} We claim that 
    \begin{equation}\label{eq:backscattering-lower-order}
        \langle{\partial_\nu(u_1^+ - u_2^+), u^+}\rangle_{L^2(\partial M)} - \langle{u_1^+ - u_2^+, \partial_\nu u^+}\rangle_{L^2(\partial M)} = \mc{O}(\lambda^{\frac{1}{2}}).
    \end{equation}      
    By an expansion similar to \eqref{eq:normal-derivative-expansion}, and using Proposition \ref{prop:boundary-estimate+geometry} to estimate the remainder terms, this is equivalent to showing
    \begin{align*}
        &\underbrace{\lambda^{\frac{n - 1}{2}}\int_{\partial M} e^{\I \lambda (\Phi_1^+ - \overline{\Phi_1^-})} a_0^{(1), +} \overline{a}_0^+ \chi_1^+ \chi_2^- (\partial_\nu \Phi_1^+ + \partial_\nu \overline{\Phi_1^-})\, d\vol}_{T_1 := }\\ 
        &- \lambda^{\frac{n - 1}{2}}\int_{\partial M} e^{\I \lambda (\Phi_2^+ - \overline{\Phi_2^-})} a_0^{(2), +} \overline{a}_0^+ \chi_2^+ \chi_2^- (\partial_\nu \Phi_2^+ + \partial_\nu \overline{\Phi_2^-})\, d\vol = \mc{O}(\lambda^{-\frac{1}{2}}),
    \end{align*}
    where we used that by construction $\Phi_1^- = \Phi_2^-$. It thus suffices to show that both terms on the left hand side are $\mc{O}(\lambda^{-\frac{1}{2}})$; by symmetry, it suffices to show this for the first term $T_1$. We will write $\Phi := \Phi_1^+ - \overline{\Phi_1^-}$.
    
    We first expand the last part of the integrand using Taylor's expansion. Note that by assumption we have $\partial_\nu \Phi_1^-(x_0) < -\varepsilon$, while $\partial_\nu \Phi_1^+(x_1) > \varepsilon_+$. By the uniform $C^2$ bounds on $\Phi_1^\pm$ we may assume that (after possibly reducing the constant $\delta_{\mathrm{GB}}$ which we recall controls the size of support of $\chi_i^\pm$, see Step 2)
    \begin{equation}\label{eq:uniform-lower-bound-exit}
        \Re \partial_\nu \Phi_{1}^-(x) < -\varepsilon/2,\quad x \in \supp (\chi_2^-),\quad \Re \partial_\nu \Phi_{1}^+(x) > \varepsilon_{+}/2,\quad x \in \supp(\chi_1^+),
    \end{equation}
    where we recall $\varepsilon_+ > 0$ comes from Proposition \ref{prop:boundary-estimate+geometry}. We may then compute for $x \in \supp(\chi_2^-) \cap \supp(\chi_1^+)$ that
    \begin{align*}
        &\partial_{\nu}(\Phi_1^+ + \overline{\Phi_1^-})(x) = \I \partial_{\nu}(\Im \Phi_1^+ - \Im \Phi_1^-)(x) + \partial_{\nu}(\Re \Phi_1^+ + \Re \Phi_1^-)(x)\\
        &=\mc{O}(d_{\partial M}(x_0, x) + d_{\partial M}(x_1, x)) +  \sqrt{|\nabla \Re \Phi_1^+(x)|^2 - |\nabla_{\partial M} \Re \Phi_1^+(x)|^2}\\ 
        &\hspace{9cm}- \sqrt{|\nabla \Re \Phi_1^-(x)|^2 - |\nabla_{\partial M} \Re \Phi_1^-(x)|^2}\\
        &= \mc{O}(d_{\partial M}(x_0, x) + d_{\partial M}(x_1, x)) +  F(x) (|\nabla \Re \Phi_1^+(x)|^2 - |\nabla \Re \Phi_1^-(x)|^2\\ 
        &\hspace{9cm}+ |\nabla_{\partial M} \Re \Phi_1^-(x)|^2 - |\nabla_{\partial M} \Re \Phi_1^+(x)|^2)\\
        &= \mc{O}(d_{\partial M}(x_0, x) + d_{\partial M}(x_1, x))\\ 
        &\hspace{3cm}+  F(x) \langle{\nabla_{\partial M} \Re \Phi_1^-(x) - \nabla_{\partial M} \Re \Phi_1^+(x), \nabla_{\partial M} \Re \Phi_1^-(x) + \nabla_{\partial M} \Re \Phi_1^+(x)}\rangle\\
        &= \mc{O}(d_{\partial M}(x_0, x) + d_{\partial M}(x_1, x)) -  F(x) \langle{\nabla_{\partial M} \Phi(x), \nabla_{\partial M} \Re \Phi_1^-(x) + \nabla_{\partial M} \Re \Phi_1^+(x)}\rangle,
    \end{align*}
    where we introduced the notation
    \[
            F(x) = \left[\sqrt{|\nabla \Re \Phi_1^+(x)|^2 - |\nabla_{\partial M} \Re \Phi_1^+(x)|^2} + \sqrt{|\nabla \Re \Phi_1^-(x)|^2 - |\nabla_{\partial M} \Re \Phi_1^-(x)|^2}\right]^{-1},
    \]
    and in the second line we used that $\Im \Phi_1^+$ and $\Im \Phi_1^-$ vanish to first order at $x_1$ and $x_0$, respectively, and Taylor's expansion, so that
    \[
        \partial_\nu \Im \Phi_1^+(x) = \mc{O}(d(x_1, x)),\quad \partial_\nu \Im \Phi_1^-(x) = \mc{O}(d(x_0, x)).
    \]
    Note crucially that the sign in front of the square root is different for $\Phi_1^+$ and $\Phi_1^-$. In the fourth line we used Taylor's expansion again and the fact that 
    \[
        |\nabla \Re \Phi_1^+(x_1)| = |\nabla \Re \Phi_1^-(x_0)| = 1,
    \]
    so that
    \[
        |\nabla \Re \Phi_1^+(x)|^2 = 1 + \mc{O}(d_{\partial M}(x, x_1)),\quad |\nabla \Re \Phi_1^-(x)|^2 = 1 + \mc{O}(d_{\partial M}(x, x_0)).
    \]
    In the last two lines we used that $\|F\|_{C^0} = \mc{O}(1)$ and by Taylor's expansion that
    \[
        \nabla_{\partial M} \Phi(x) = \nabla_{\partial M} \Re \Phi_1^+(x) - \nabla_{\partial M} \Re \Phi_1^-(x) + \mc{O}(d_{\partial M}(x, x_0) + d_{\partial M}(x, x_1)).
    \]
    Note that $\langle{\bullet, \bullet}\rangle$ denotes the Hermitian inner product. Therefore, we may write
    \begin{align*}
        T_1 &= -\underbrace{\lambda^{\frac{n - 1}{2}}\int_{\partial M} e^{\I \lambda \Phi} a_0^{(1), +} \overline{a}_0^+ \chi_1^+ \chi_2^- F \langle{\nabla \Phi, \nabla_{\partial M} \Re(\Phi_1^+ + \Phi_1^-)}\rangle\, d\vol}_{A:=}\\
        &+ \underbrace{\lambda^{\frac{n - 1}{2}}\int_{\partial M} e^{\I \lambda \Phi} a_0^{(1), +} \overline{a}_0^+ \chi_1^+ \chi_2^- \mc{O}(d_{\partial M}(x_0, x) + d_{\partial M}(x_1, x))\, d\vol.}_{B:=}
    \end{align*}
    In what follows we will use the following notation: $Yf$ denotes the $Y$ derivative of a function $f$, where $Y$ is a vector field; $Y$ will be the gradient vector field of a suitable function. For the first term of $T_1$ we simply write
    \begin{align*}
        A &= \frac{1}{\I \lambda} \lambda^{\frac{n-1}{2}}\int_{\partial M} \nabla_{\partial M} \Re(\Phi_1^+ + \Phi_1^-) \left(e^{\I \lambda \Phi}\right) \,a_0^{(1), +} \overline{a}_0^+ \chi_1^+ \chi_2^- F\,d\vol\\
        &= \frac{1}{\I \lambda} \lambda^{\frac{n-1}{2}} \int_{\partial M} e^{\I \lambda \Phi} (-\nabla_{\partial M} \Re(\Phi_1^+ + \Phi_1^-) - \Delta^{\partial M}_{g_1} \Re (\Phi_1^+ + \Phi_1^-))(a_0^{(1), +} \overline{a}_0^+ \chi_1^+ \chi_2^- F)\, d\vol\\
        &= \mc{O}(\lambda^{-1}),
    \end{align*}
    where in the second equality we integrated by parts (we write $\Delta_{g_1}^{\partial M}$ for the Laplacian of the metric $g_1$ on $\partial M$), and in the last inequality we estimate the integral in the usual way (as in \eqref{eq:usual-bound}) using Proposition \ref{prop:phase-proposition}, as well as that the integrands are uniformly bounded.
    
    For the other term, we simply have
    \begin{align*}
        |B| &\leq \mc{O}(1)\lambda^{\frac{n-1}{2}} \int_{\partial M} e^{- C\lambda (d_{\partial M}(x_0, x)^2 + d_{\partial M}(x_1, x)^2)} \chi_1^+ \chi_2^- \mc{O}(d_{\partial M}(x_0, x) + d_{\partial M}(x_1, x))\, d\vol\\
        &\leq \mc{O}(1)\lambda^{\frac{n-1}{2}} \left(\int_{\partial M} e^{- C\lambda d_{\partial M}(x_0, x)^2} \chi_2^- d_{\partial M}(x_0, x)\, d\vol\right.\\ 
        &\hspace{5cm}+ \left.\int_{\partial M} e^{- C\lambda d_{\partial M}(x_1, x)^2} \chi_1^+ d_{\partial M}(x_1, x)\, d\vol\right)\\
        &= \mc{O}(\lambda^{-\frac{1}{2}}),
    \end{align*}
    where in the first line we used Proposition \ref{prop:phase-proposition}, and in the last line we estimated both integrals in the usual way (as in \eqref{eq:usual-bound}). Since $T_1 = B - A$, the claim follows immediately.
 \begin{remark}
        We note that the loss of half a power of $\lambda$ in \eqref{eq:backscattering-lower-order} might be necessary. Indeed, the function $\partial_\nu \Im \Phi_1^+$ need not vanish to second order at $x_1$, so the Taylor expansion $\partial_\nu \Im \Phi_1^+(x) = \mc{O}(d_{\partial M}(x, x_1))$ might be sharp. In fact, a straightforward calculation shows that $\partial_\nu \Im \Phi_1^+(x) = \mc{O}(d_{\partial M}(x, x_1)^2)$ if and only if the geodesic $\gamma_1$ exits $M$ perpendicular to $\partial M$. (The same analysis is of course valid near $x_0$ and the incoming direction.)
    \end{remark}
    \medskip

    \emph{Step 6: exit points agree.} We apply the stationary phase lemma to the first integral in \eqref{eq:working-identity-2}. According to Proposition \ref{prop:phase-proposition} (which gives the non-degeneracy of the phase), we get that
    \begin{equation}\label{eq:I_1}
        I_2 = \underbrace{\frac{2\pi^{\frac{n - 1}{2}}}{\sqrt{\det \nabla^2_{\partial M} \Im \Phi_2^+(x_2)}} \Re(\partial_\nu \Phi_2^+) (x_2) a_0^{(2), +}(x_2) \overline{a}_0^-(x_2)}_{F_2(x_0, v_0) :=} + R,
    \end{equation}
    where the remainder $R$ is bounded by
    \begin{equation}\label{eq:remainder-stat-phase}
        |R| \leq C \lambda^{-1} \|a_0^{(2), +} \overline{a}_0^- (\chi_2^+)^2 \Re (\partial_\nu \Phi_2^+)\|_{C^{n + 2}(\partial M)} = \mc{O}(\lambda^{-1}),
    \end{equation}
    where in the last inequality we used Theorem \ref{thm:gbu-AC22} to uniformly bound the growth of derivatives, as well as the fact that
    \[
        \|f\|_{C^k(\partial M)} \leq C \|f\|_{C^k(M)},\quad \forall k \in \mathbb{Z}_{> 0},\quad f \in C^\infty(M), 
    \]
    for some $C = C(k) > 0$, since the boundary covariant derivatives can be expressed as derivatives in $M$. Note that thanks to the formula from Theorem \ref{thm:gbu-AC22} (where $H^{(2)}$ denotes the solution to the Riccati equation corresponding to $g_2$),
    \begin{equation}\label{eq:amplitute-riccati}
        a_0^{(2), +}(x_2) = \exp\left(-\frac{1}{2} \int_0^{\ell_{g_2}(v_0)} \tr H^{(2)}(s)\, ds\right),
    \end{equation}
    there is a uniform lower bound for $|a_0^{(2), +}(x_2)|$. Note also that by assumption $a_0^-(x_2) = 1$. The determinant $\det(\nabla_{\partial M}^2 \Im \Phi_2^+)(x_2)$ is also uniformly bounded from above by Theorem \ref{thm:gbu-AC22}, while we recall from the assumption on $\gamma_2$ that $\partial_\nu \Re \Phi_2^+(x_2) > \varepsilon_+$. Therefore, from \eqref{eq:I_1}, for large enough $\lambda$ we get for some $C_2 = C_2(g_2) > 0$ that
    \begin{equation}\label{eq:I_1-lower-bound}
        |I_2| \geq C_2 > 0.
    \end{equation}

    We next bound the other term in \eqref{eq:working-identity-2} as follows
    \begin{align}\label{eq:I_2-upper-bound}
    \begin{split}
        |I_1| &\leq \mc{O}(1) \lambda^{\frac{n-1}{2}} \int_{\partial M} e^{-C \lambda(d_{\partial M}(x_1, x)^2 + d_{\partial M}(x_2, x)^2)} \chi_1^+ \chi_2^+\, d\vol\\ 
        &\leq \mc{O}(1) e^{- \frac{C\lambda}{4} d_{\partial M}(x_1, x_2)^2}  \lambda^{\frac{n-1}{2}} \int_{\partial M} e^{-\frac{C}{2} \lambda(d_{\partial M}(x_1, x)^2 + d_{\partial M}(x_2, x)^2)} \chi_1^+ \chi_2^+\, d\vol\\ 
        &\leq \mc{O}(1) e^{- \frac{C\lambda}{4} d_{\partial M}(x_1, x_2)^2}. 
    \end{split}
    \end{align}
    where in the first inequality we used Proposition \ref{prop:phase-proposition}, in the second one we applied the triangle inequality, as well as the arithmetic-geometric mean inequality, and in the last estimate we bounded the integral by going to normal coordinates based at $x_1$ in the usual way (see \eqref{eq:usual-bound}). Combining \eqref{eq:I_2-upper-bound} and \eqref{eq:I_1-lower-bound}, as well as \eqref{eq:working-identity-2} we get
    \[
        0 < C_2 \leq \mc{O}(1) e^{-C \lambda d_{\partial M}(x_1, x_2)^2} + \mc{O}(\lambda^{-\frac{1}{2}}).
    \]
    We set
    \begin{equation}\label{eq:Lambda1}
        \Lambda := \sup_{(x, v) \in \mc{G}_{\varepsilon}} d( \pi \circ S_{g_1}(x, v), \pi \circ S_{g_2}(x, v)).
    \end{equation}
    If $\Lambda = 0$, there is nothing to do; if $\Lambda > 0$, then taking $\lambda$ large enough depending on $\Lambda$ contradicts the previous inequality and shows that $x_1 = x_2$, completing the proof of this step.
    \medskip

    \emph{Step 7: equality of exit directions.} According to Step 6, we now assume that $x_1 = x_2 =: x_+$. For simplicity we write $\Phi := \Phi_1^+ - \overline{\Phi_2^+}$. The term $I_2$ in \eqref{eq:working-identity-2} is treated in the same way as in Step 6; we now study $I_1$ and apply the method of non-stationary phase. Indeed, by Theorem \ref{thm:gbu-AC22}, we have
    \[
        \nabla_{\partial M}\Phi(x_{+}) = \Pi_{TM \to T\partial M}(v_1 - v_2),
    \]
    where $\Pi := \Pi_{TM \to T\partial M}$ denotes the orthogonal projection to $T\partial M$. We let
    \begin{equation}\label{eq:exit-directions}
        \Lambda := \sup_{(x, v) \in \mc{G}_{\varepsilon}} \left|\Pi_{TM \to T\partial M} (S_{g_1}(x, v) - S_{g_2}(x, v)) \right|.
    \end{equation}
    If $\Lambda = 0$, we are done and we do nothing; if $\Lambda > 0$, we argue as follows. Assume that $(x_0, v_0) \in \mc{G}_{\varepsilon}$ is such that $|\nabla_{\partial M} \Phi (x_{+})| > \Lambda/2$. Since the second derivatives of $\Phi$ are uniformly bounded, after possibly reducing $\delta_{\mathrm{GB}}$, we may assume that
    \[
        |\nabla_{\partial M}\Phi(x)| \geq \frac{\Lambda}{4},\quad x \in \supp \chi_1^+.
    \]
    We now argue explicitly integrating by parts by considering the following first order differential operator (vector field)
    \[
        L := \frac{\nabla_{\partial M} \overline{\Phi}}{|\nabla_{\partial M}\Phi|^2}.
    \]
    We clearly have by construction that $L e^{\I \lambda \Phi} = \I \lambda e^{\I \lambda\Phi}$. The formal (real) transpose of this operator is given by, for $\phi \in C^\infty_{\comp}(\supp(\chi_1^+))$
    \begin{equation}
    \begin{split}\label{eq:transpose}
        &\hspace{-0.5cm}L^\top \phi = -\nabla_{\partial M}\overline{\Phi} \left(\frac{\phi}{|\nabla_{\partial M}\Phi|^2}\right) - \frac{\phi \Delta^{\partial M}_{g_1}\overline{\Phi}}{|\nabla_{\partial M}\Phi|^2}\\
        &\hspace{-0.5cm}= -\frac{\nabla_{\partial M}\overline{\Phi} (\phi)}{|\nabla_{\partial M}\Phi|^2} + \frac{\phi (\langle{\nabla_{\nabla_{\partial M} \overline{\Phi}} \nabla_{\partial M} \Phi, \nabla_{\partial M} \overline{\Phi}}\rangle + \langle{\nabla_{\partial M} \Phi, \nabla_{\nabla_{\partial M} \overline{\Phi}}  \nabla_{\partial M} \overline{\Phi}}\rangle)}{|\nabla_{\partial M}\Phi|^4} - \frac{\phi \Delta^{\partial M}_{g_1}\overline{\Phi}}{|\nabla_{\partial M}\Phi|^2},
    \end{split}
    \end{equation}
    where $\langle{\bullet, \bullet}\rangle$ denotes the complex bilinear pairing induced by the metric. We now have integrating by parts that
    \begin{align*}
        I_1 = \frac{1}{\I \lambda} \lambda^{\frac{n - 1}{2}} \int_{\partial M} e^{\I \lambda \Phi} L^\top(a_0^{(1), +} \overline{a}_0^- \chi_1^+ \chi_2^+ \partial_{\nu}(\Phi_1^+ + \overline{\Phi}_2^+))\, d\vol = \mc{O}(\lambda^{-1}),
    \end{align*}
    where in the final estimate we used that the integrands are uniformly bounded in $C^2$, and that by assumption $|\nabla_{\partial M} \Phi| \geq \Lambda/4$ on $\supp \chi_1^+$. We also used Proposition \ref{prop:phase-proposition} and estimated the Gaussian in the usual way. Thus by \eqref{eq:I_1-lower-bound} and \eqref{eq:working-identity-2}, we get that
    \[
        0 < C \leq |I_2| \leq |I_1| + \mc{O}(\lambda^{-\frac{1}{2}}) = \mc{O}(\lambda^{-\frac{1}{2}}),
    \]
    which is a contradiction for $\lambda$ large enough. This shows that $\Lambda = 0$, i.e. that the tangential components of $S_{g_1}(x, v)$ and $S_{g_2}(x, v)$ agree. Since the $v_i$ are uniquely determined by $\Pi_{TM \to T\partial M} v_i$, for $i = 1, 2$, the conclusion follows.
    \medskip

    \emph{Step 8: equality of exit times.} Note that thanks to \eqref{eq:equivalence-partial-scattering-data}, equality of $S_{g_1}$ and $S_{g_2}$ on $\mc{G}_{\varepsilon}$ implies $S_{g_1} = S_{g_2}$ on the whole $\partial_-SM$. Equality of travel times follows directly from Proposition \ref{prop:known?}. Alternatively, we argue as follows. 
    
    We will study the integral defining $I_1$. According to Steps 6 and 7, we may now assume that $(x_1, v_1) = (x_2, v_2) =: (x_{+}, v_{+})$. We will still write $\Phi := \Phi_1^+ - \overline{\Phi_2}^+$. It follows from the assumptions that $\nabla_{\partial M}\Phi(x_{+}) = 0$. The Hessian can be estimated, for each $v \in T\partial M \otimes \mathbb{C}$ and $x \in \supp (\chi_1^+) \cap \supp (\chi_2^+)$, as
    \begin{equation}\label{eq:nabla-squared-invertible}
       \Im(\nabla^2_{\partial M}\Phi(x_{+})(v, \overline{v})) = \nabla^2_{\partial M} \Im \Phi (x_{+}) (\Re v, \Re v) + \nabla^2_{\partial M} \Im \Phi (x_{+}) (\Im v, \Im v) \geq C|v|^2,
    \end{equation}
    for some uniform $C > 0$, where in the last estimate we used Proposition \ref{prop:phase-proposition}. It follows that $\nabla^2_{\partial M} \Phi(x_{+})$ is non-degenerate. (It can also be shown that $|\det(\nabla^2_{\partial M} \Phi(x_{+}))| > C$ for some uniform $C > 0$, but we will not need this.) Moreover, $x = x_{+}$ is the only critical point of $\Phi$ in $\supp(\chi_1^+) \cap \supp(\chi_2^+)$, since it is the unique critical point of $\Im \Phi = \Im \Phi_1^+ + \Im \Phi_2^+$ similarly to Proposition \ref{prop:phase-proposition}, using that the sum of two positive definite matrices is positive definite. We may thus apply the stationary phase lemma to $I_1$ to get
    \begin{equation}\label{eq:stationary-phase-I_2}
        I_1 = e^{\I \lambda(\ell_{g_1} - \ell_{g_2})(x_0, v_0)} \underbrace{\left[\det\left(\frac{\nabla^2_{\partial M} \Phi(x_{+})}{2\pi \I}\right)\right]^{-\frac{1}{2}} a_0^{(1), +}(x_+) \partial_{\nu}(\Phi_1^+ + \overline{\Phi}_2^+)(x_{+})}_{F_1(x_0, v_0):=} + \mc{O}(\lambda^{-1}),
    \end{equation}
    where the remainder is uniformly bounded similarly as before, see \eqref{eq:remainder-stat-phase}. By \eqref{eq:stationary-phase-I_2} and \eqref{eq:I_1}, as well as \eqref{eq:working-identity-2}, we may write
    \[
        e^{\I \lambda(\ell_{g_1} - \ell_{g_2})(x_0, v_0)} F_1(x_0, v_0) - F_2(x_0, v_0) = \mc{O}(\lambda^{-\frac{1}{2}}). 
    \]
    Similarly to the derivation of \eqref{eq:I_1-lower-bound}, $F_1$ is bounded from below by a positive constant. Indeed, note that the determinant part here is bounded because $|\det \nabla^2_{\partial M} \Phi| = \mc{O}(1)$, and the fractional power $-\frac{1}{2}$ is defined by picking the usual holomorphic branch, while $|a_0^{(1), +}(x_+)|$ is bounded uniformly away from zero similarly to Step 6 and the use of formula \eqref{eq:amplitute-riccati}. Also, as $(x_0, v_0) \in \mc{G}_{\varepsilon}$, we have by Proposition \ref{prop:boundary-estimate+geometry} that
    \[
        \partial_\nu(\Phi_1^+ + \overline{\Phi}_2^+)(x_{+}) = 2\langle{v_{+}, \partial_{\nu}}\rangle > 2\varepsilon_+.
    \]
    Therefore, if we write $F := \frac{F_1}{F_2}$, we have
    \begin{equation}
        F(x, v) e^{\I \lambda(\ell_{g_1} - \ell_{g_2})(x, v)} - 1 = \mc{O}(\lambda^{-\frac{1}{2}}),\quad (x, v) \in \mc{G}_\varepsilon.
    \end{equation}
    We also note that the functions $F_1, F_2: \mc{G}_{\varepsilon} \to \mathbb{C}$ are smooth functions of $(x_0, v_0)$, by smooth dependence of solutions to ODEs with respect to a parameter (see \cite{PSU_book} for instance for similar constructions). We note here that in fact $F(x, v) \equiv 1$ for $(x, v)$ near the boundary of $\mc{G}_\varepsilon$, so it admits an extension by $1$ to $\partial_-SM$. Indeed, by construction the geodesics generated by such $(x, v)$ are the same for $g_1$ and $g_2$, and the metrics agree near such geodesics. Thus the amplitudes and the phases agree as they are given by solving the same ODEs with the same initial conditions. We conclude that $F(x, v) = 1$, and also that $\ell_{g_1}(x, v) = \ell_{g_2}(x, v)$. Therefore
    \begin{equation}\label{eq:stationary-phase-consequence}
        F(x, v) e^{\I \lambda(\ell_{g_1} - \ell_{g_2})(x, v)} - 1 = \mc{O}(\lambda^{-\frac{1}{2}}),\quad (x, v) \in \partial_-SM.
    \end{equation}
    
    Consider the quantities
    \[
        \Lambda := \sup_{(x, v) \in \partial_-SM} |\ell_{g_1}(x, v) - \ell_{g_2}(x, v)|,\quad \Lambda_F := \sup_{(x, v) \in \partial_-SM} \left|1 - |F|\right|.
    \]
    If $\Lambda_F > 0$, then taking $\lambda$ large enough contradicts \eqref{eq:stationary-phase-consequence}. Thus we may assume that $|F| = 1$.

    Let $z \in \partial_-SM$. By strict convexity of the boundary (see Step 1 and Proposition \ref{prop:boundary-estimate+geometry}), there exists $z_0$ near the boundary of $\partial_-SM$ (i.e. $z_0$ is nearly tangent to $\partial M$), and such that $\pi(z_0) = \pi(z)$, i.e. $z_0$ and $z$ are in the same fibre of $\pi: \partial_-SM \to \partial M$. Moreover, $z_0$ and $z$ are connected by a path 
    \[
        \Gamma: [0, 1] \to \partial_-SM,\quad \Gamma(0) = z_0,\quad \Gamma(1) = z,
    \]
    so that $\Gamma \subset \partial_-SM(\pi(z_0))$ lies in the same fibre. Note that $\Gamma$ can be chosen of uniformly bounded length. Consider the map $F\circ \Gamma: [0, 1] \to \mathbb{S}^1$ and write $F\circ \Gamma = e^{\I f_\Gamma}$ where $f_\Gamma(0) = 0$ (possible since $F(z_0) = 1$) and $f_\Gamma$ is real-valued and smooth on $\Gamma$, and is uniquely determined. Such $f_\Gamma$ exists since $[0, 1]$ is contractible. (More generally, we could work with a logarithm function on a contractible set $U \subset \partial_-SM$.) 
    
    Now \eqref{eq:stationary-phase-consequence} and Taylor's expansion of trigonometric functions imply that for some uniform $C > 0$ we have
    \begin{equation}\label{eq:G_gamma}
        G_\Gamma(t) := \lambda(\ell_{g_1} - \ell_{g_2}) \circ \Gamma(t) + f_\Gamma(t)  \in 2\pi \mathbb{Z} + [-C\lambda^{-\frac{1}{2}}, C\lambda^{-\frac{1}{2}}],\quad \forall t \in [0, 1].
    \end{equation}
    We now take $\lambda$ large enough so that $C\lambda^{-\frac{1}{2}} < \pi$. This means that the right hand side of \eqref{eq:G_gamma} is a disjoint union of closed intervals, and since by construction $G_\Gamma(0) = 0$, by the intermediate value theorem $G_\Gamma$ has to take values only in $[-C\lambda^{-1/2}, C\lambda^{-1/2}]$. More invariantly, and evaluated at $t = 1$, we showed that
    \[
        \lambda(\ell_{g_1} - \ell_{g_2})(z) + \mc{O}(\lambda^{-\frac{1}{2}}) = -f_\Gamma(1) = -f_\Gamma(1) + f_\Gamma(0) = \I \int_\Gamma \frac{dF}{F}.
    \]
    Since $dF$ is uniformly bounded, we conclude that $\ell_{g_1}(z) - \ell_{g_2}(z) = \mc{O}(\lambda^{-1})$ with uniform implied constants. Since $z$ was arbitrary, we conclude that
    \[
        \ell_{g_1} - \ell_{g_2} = \mc{O}(\lambda^{-1}).
    \]
    
    If $\Lambda > 0$, taking $\lambda$ large enough gives a contradiction, and therefore the lens data agree. This completes the proof.
    \end{proof}
    
        \begin{remark}
        The equality of travel times $\ell_{g_1} \equiv \ell_{g_2}$ (which we showed in Step 8 of the above proof) and \eqref{eq:stationary-phase-consequence} immediately imply that $F_1 \equiv F_2$ for $\lambda$ large enough depending only on $g_1$ and $g_2$.
    \end{remark}

    \begin{remark}
        We note that in fact the uniformity in the construction of Gaussian beams (see Theorem \ref{thm:gbu-AC22}) is not necessary if we are not interested in the dependence of $\lambda_0$ on $g_1$ and $g_2$. Indeed, one could argue by contradiction, assuming that $S_{g_1} \neq S_{g_2}$ and that there exists a sequence $(\lambda_i)_i$, $\lambda_i \to \infty$ as $i \to \infty$, such that $\Lambda^{\lambda_i}_{g_1} = \Lambda_{g_2}^{\lambda_i}$. Pick $(x, v)$ in the incoming boundary such that $S_{g_1}(x, v) \neq S_{g_2}(x, v)$. Then the same argument as above gets us to a contradiction; note that we only used the Gaussian beam construction along a single geodesic. Equality of travel times follows by Proposition \ref{prop:known?}, for instance, or the procedure in Step 8.
    \end{remark}

    \section{Quantification of uniqueness: three-solutions}\label{section:quantification-three-solutions}

    Here we derive a result which quantifies the uniqueness result from Theorem \ref{thm_main_1}. We briefly recall the notation from the introduction: $N \geq 2$ is an integer, $B > 1$, $\delta > 0$, $T > 0$, and  $g_0$ an arbitrary non-trapping Riemannian metric on the compact manifold $M$ with strictly convex boundary $\partial M$. We consider a subset of Riemannian metrics $\mathcal{M}_{N, B, \delta, T}(g_0)$ defined in \eqref{eq:space-of-metrics}, bounded in $C^N$, agreeing with $g_0$ near the boundary, and with maximal geodesics having length $\leq T$. In the following we always have $h = \lambda^{-1}$.
    
    \subsection{Uniformity with respect to the metric.}\label{ssec:uniformity} We note here that all the constants appearing in Theorem \ref{thm:gbu-AC22} can be made uniform with respect to $g \in \mathcal{M}_{N, B, \delta, T}(g_0)$. Indeed, the injectivity radius is continuous under $C^2$ perturbations of the metric and all geodesics have uniformly bounded length by assumption. The phase $\Phi$ and the amplitude $a$ are obtained by solving ODEs along geodesics, see \cite[Lemmas 6.3, 6.4]{Ma_Sahoo_Salo}. According to \cite[Lemma 6.5]{Ma_Sahoo_Salo}, the coefficients of these ODEs are uniformly bounded and hence so are their solutions; the extension to the manifold is also uniformly bounded as the derivatives of the exponential map are. The lower bound on $H$ follows in the same way as in the proof of \cite[Lemma 6.3]{Ma_Sahoo_Salo}. The amplitudes are dealt with in a similar way. We are left to observe that the geodesics in $(M, g)$ have a uniformly bounded number of self-intersections, as in \cite[Lemma 6.7]{Ma_Sahoo_Salo}, since as noted above the injectivity radius is bounded from below and the geodesics have bounded lengths. This shows that the pushforward from a normal ball bundle of the geodesic to its tubular neigbhourhood is a map whose fibres have uniformly bounded finite cardinality, and completes the argument, see \cite[Section 6.2]{Ma_Sahoo_Salo}.

    We explain the uniformity in Proposition~\ref{prop_resolvent_estimate}. Using the common boundary collar, extend all metrics to the same closed manifold $\mathcal N$, with a common metric outside $M$ and uniform $C^N$ bounds and ellipticity. We denote the extensions again by $g$. Choose fixed nested extensions $M\Subset M_1\Subset M_2\Subset\mathcal N$, whose boundaries $\partial M_1$ and $\partial M_2$ are strictly convex outward parallel hypersurfaces in the common collar. Here $\mathcal N$, $M_1$, $M_2$ and the cutoffs below depend only on $(M,g_0)$ and $\delta$, and not on $g$.

For $(x,\eta)\in S_g^*M_1$, let $\tau_g^+(x,\eta)$ be the first time at which the unit-speed geodesic starting at $x$ with initial velocity $\eta^\sharp$ reaches $\partial M_2$, where $\eta^\sharp$ is the metric dual of $\eta$. The bound $\tau_g\le T$ on $M$ and the common
collar give a uniform upper bound for $\tau_g^+$. Strict convexity of the collar and the fixed separation between $\partial M_1$ and $\partial M_2$ give a uniform positive lower bound for the outward normal velocity at this exit. Differentiating the transverse exit equation and using the geodesic-flow estimates therefore controls the required finite derivatives of $\tau_g^+$ uniformly, provided $N$ is sufficiently large.

Set $p_g(x,\xi)=|\xi|_g^2$, with Hamilton vector field $H_{p_g}$. The escape function
\[
a_g^0(x,\xi)
=
-\frac{|\xi|_g}{2}\,
\tau_g^+\!\left(x,\frac{\xi}{|\xi|_g}\right),
\qquad x\in M_1,\quad \xi\ne0,
\]
is homogeneous of degree one and satisfies $H_{p_g}a_g^0=p_g$, since $|\xi|_g$ is conserved and the Hamiltonian flow has speed $2|\xi|_g$. Multiplying $a_g^0$ by a fixed spatial cutoff supported in $M_1^\circ$ and equal to one on a fixed neighborhood of $M$, and by a fixed smooth scalar cutoff vanishing near zero and equal to one on $[\tfrac12,\infty)$, evaluated at $|\xi|_g$, gives a real order-one symbol
$a_g$ with uniformly bounded required finite symbol seminorms. The identity $H_{p_g}a_g=p_g$ holds over this neighborhood for $|\xi|_g$ near one.

With the subsequent spatial localizations supported in this neighborhood, the elliptic, positive-commutator, and Sobolev-order arguments of
\cite[Lemma~3.2]{Ma_Sahoo_Salo} give the estimate needed at $s=-2$ with uniform constants. Only finitely many symbol seminorms and sufficiently high fixed finite-order remainder estimates are required. The Hahn--Banach and duality argument in
\cite[Proposition~3.1]{Ma_Sahoo_Salo}, followed by the uniform equivalence of the spectral semiclassical $H^2$ norm and the corresponding weighted derivative norm, proves Proposition~\ref{prop_resolvent_estimate} with $C$ and $\lambda_0$ depending only on $(M,g_0)$, $B$, $\delta$, and $T$.

    Next, we observe that the conclusions of Proposition \ref{prop:boundary-estimate+geometry} still hold for some $\delta > 0$ and $\varepsilon > 0$ fixed for all $g \in \mathcal{M}_{N, B, \delta, T}(g_0)$. Indeed, this follows by direct inspection of its proof, since the metrics agree in $M \setminus M_{\delta_0}$ with $g_0$. Similarly, the uniform estimates and the splitting of solutions into $u = u^+ + u^-$, proved in Proposition \ref{prop:boundary-estimate+geometry} are still valid. Finally, the conclusions of Proposition \ref{prop:phase-proposition} remain valid uniformly for the metrics in $\mathcal{M}_{N, B, \delta, T}(g_0)$, since the metrics agree in $M \setminus M_{\delta_0}$ with $g_0$, and as discussed above there are uniform bounds for $\nabla^2 \Im \Phi$ along geodesics. In the proof of Theorem \ref{theorem:quantification} below we use these facts without further discussion.

    \subsection{Proof} We can now prove the main result of this section.
    \begin{proof}[Proof of Theorem \ref{theorem:quantification}]
        The proof is obtained by quantifying steps in the proof of Theorem \ref{thm_main_1}; we therefore omit some details to avoid repetition and closely follow the mentioned proof. Note that since by assumption all metrics agree with a fixed background metric $g_0$ near the boundary, Step 1 is not needed here. In particular, we do not consider any extensions and work directly on the original manifold $M$ throughout the proof.

        Recall that for $\varepsilon > 0$, we introduced
        \[
            \mc{G}_\varepsilon = \{(x, v) \in \partial_-SM \mid -g_x(v, \nu(x)) > \varepsilon\}.
        \]
    Let $\delta_0\in (0, \delta)$ be such that $(\partial M_{\eta})_{\eta \in [0, \delta_0]}$ are strictly convex (with respect to $g_0$). As explained in \S \ref{ssec:uniformity}, we may take a uniform $\varepsilon > 0$ that works for all metrics $g \in \mathcal{M}_{N, B, \delta, T}(g_0)$, so that the conclusions of Proposition \ref{prop:boundary-estimate+geometry} and \ref{prop:phase-proposition} are still valid.
        \medskip

        \emph{Step 1: the solutions and the integral identity.} We now let $u_1$, $u_2$, and $u$ be the Gaussian beams as in the proof of Theorem \ref{thm_main_1}; we similarly consider $r_1$, $r_2$, and $r$ to be the corrections to actual solutions. By Proposition \ref{prop:extension-2}, we see that
        \begin{align*}
            \langle{\partial_\nu(u_1 + r_1 - u_2 - r_2), u + r}\rangle_{L^2(\partial M)} + &\langle{(\Lambda_{g_2}^\lambda - \Lambda_{g_1}^\lambda)(u_1 + r_1), u + r}\rangle_{L^2(\partial M)}\\ 
            &= \langle{u_1 + r_1 - u_2 - r_2, \partial_\nu (u + r)}\rangle_{L^2(\partial M)}.
        \end{align*}
        We note that
        \begin{equation}\label{eq:difference-DN-maps}
            \langle{(\Lambda_{g_1}^\lambda - \Lambda_{g_2}^\lambda)(u_1 + r_1), u + r}\rangle_{L^2(\partial M)} = \mc{O}(\|\Lambda_{g_1}^\lambda - \Lambda_{g_2}^\lambda\|_{H^1_h \to L^2}).
        \end{equation}
        Here we used that $\Lambda_{g_i}^\lambda: H^1_h(\partial M) \to L^2(\partial M)$ is bounded as $\Lambda_{g_i}^\lambda$ is a first order pseudodifferential operator, see \cite{LeeUhlmann1989} (we do not make any claim about the uniformity in $h$ of the norm), as well as that
        \[
            \|r_1\|_{H^1_h(\partial M)}^2 = \|r_1\|_{L^2(\partial M)}^2 + h^2 \|dr_1\|_{L^2(\partial M)}^2 \leq C (\|r_1\|^2_{H^1(M)} + h^2\|r_1\|_{H^2(M)}^2) = \mc{O}(h^2), 
        \]
        as follows from the trace theorem and \eqref{eq:H^2-remainder} above (for $K = 1$). Moreover, we used the fact that $\|u_i\|_{H^1_h(\partial M)} = \mc{O}(1)$, $i = 1, 2$, and $\|u\|_{L^2(\partial M)} = \mc{O}(1)$, as follows from Proposition \ref{prop:boundary-estimate+geometry} and its proof; finally, we used \eqref{eq:remainder-estimate}. This shows that indeed
        \[
            \|u_1 + r_1\|_{H^1_h(\partial M)} = \mc{O}(1),\quad \|u + r\|_{L^2(\partial M)} = \mc{O}(1),
        \]
        establishing \eqref{eq:difference-DN-maps}. The analysis of the other terms remains the same and using \eqref{eq:difference-DN-maps}, as well as that $u_1^- = u_2^-$ (see also \eqref{eq:working-identity-1}) and \eqref{eq:backscattering-lower-order} (showing that the backscattering term is of lower order), we get that
        \begin{equation}\label{eq:working-identity-3}
            \langle{\partial_\nu(u_1^+ - u_2^+), u^-}\rangle_{L^2(\partial M)} - \langle{u_1^+ - u_2^+, \partial_\nu u^-}\rangle_{L^2(\partial M)} = \mc{O}(\lambda^{\frac{1}{2}}) + \mc{O}(\|\Lambda_{g_1}^\lambda - \Lambda_{g_2}^\lambda\|_{H^1_h \to L^2}).
        \end{equation}
        As a consequence, after expanding the integrals in powers of $\lambda$, we have in the same way as in \eqref{eq:working-identity-2} that
        \begin{align}\label{eq:working-identity-3'}
        \begin{split}
        \mc{O}(\lambda^{-\frac{1}{2}}) + &\mc{O}(h\|\Lambda_{g_1}^\lambda - \Lambda_{g_2}^\lambda\|_{H^1_h \to L^2}) = \underbrace{2\lambda^{\frac{n-1}{2}} \int_{\partial M} e^{-2\lambda \Im \Phi_2^+} a_0^{(2), +} \overline{a}_0^- \Re(\partial_{\nu}\Phi_2^+) (\chi_2^+)^2
\, d\vol}_{I_2:=}\\
        &- \underbrace{\lambda^{\frac{n-1}{2}} \int_{\partial M} e^{\I \lambda(\Phi_1^+ - \overline{\Phi}_2^+)} a_0^{(1), +} \overline{a}_0^- (\partial_\nu \overline{\Phi}_2^+ + \partial_\nu \Phi_1^+) \chi_1^+ \chi_2^+\, d\vol}_{I_1 :=},
        \end{split}
        \end{align}
        where we adopt the notation for the phases, amplitudes, and cut-offs from the proof of Theorem \ref{thm_main_1}.
        \medskip

        \emph{Step 2: the distance between exit points.} We will use the expansion \eqref{eq:I_1} for $I_2$, together with the corresponding remainder estimate \eqref{eq:remainder-stat-phase}. By the same analysis, we get that
        \[
            |I_2| \geq C_2 > 0,
        \]
        for some $C_2$ depending only on $\mathcal{M}_{N, B, \delta, T}(g_0)$. On the other hand, $I_1$ is bounded in the same way as in \eqref{eq:I_2-upper-bound}. Therefore, we get from \eqref{eq:working-identity-3'} that
        \begin{equation}\label{eq:lower-bound-working}
            0 < C_2 \leq \mc{O}(\lambda^{-\frac{1}{2}}) + \mc{O}(h\|\Lambda_{g_1}^\lambda - \Lambda_{g_2}^\lambda\|_{H^1_h \to L^2}) + \mc{O}(1) e^{-\frac{C\lambda}{4} d_{\partial M}(x_1, x_2)^2}.
        \end{equation}
        For some uniform $\lambda_0, \varepsilon_0 > 0$ depending only on $\mathcal{M}_{N, B, \delta, T}(g_0)$, we get that parts of the right hand side can be absorbed by the left hand side if $h\|\Lambda_{g_1}^\lambda - \Lambda_{g_2}^\lambda\|_{H^1_h \to L^2} < \varepsilon_0$, and if $\lambda > \lambda_0$, so that
        \[
            0 < \frac{C_2}{2} \leq \mc{O}(1) e^{-\frac{C\lambda}{4} d_{\partial M}(x_1, x_2)^2} \implies d_{\partial M}(x_1, x_2) \leq C \lambda^{-\frac{1}{2}}.
        \]
        \medskip

        \emph{Step 3: exit directions.} We will consider the quantity
        \[
            \Lambda := \sup_{(x, v) \in \mc{G}_{\varepsilon}} \|\mc{P}^{\partial M}_{x_1 \to x_2} \Pi S_{g_1} - \Pi S_{g_2}\|_{g_0},
        \]
        where $\mc{P}^{\partial M}_{x_1 \to x_2}$ denotes parallel transport with respect to $g_0$ along the unique shortest boundary geodesic from $x_1$ to $x_2$; this geodesic exists as soon as $\lambda_0$ is taken uniformly large enough by the previous step. Also, $\Pi$ denotes the orthogonal projection to $T \partial M$. We claim that $\Lambda = \mc{O}(\lambda^{-\frac{1}{2}})$ for some uniform implied constants. Assume for the sake of contradiction that $\Lambda > C_0\lambda^{-\frac{1}{2}}$ for some $C_0 \gg 1$ to be determined later, and consider $(x_0, v_0) \in \mc{G}_{\varepsilon}$ such that 
        \[
            \|\mc{P}^{\partial M}_{x_1 \to x_2} \Pi \dot{\gamma}_1 - \Pi \dot{\gamma}_2\|_{g_0} > C_0\lambda^{-\frac{1}{2}}.
        \]
        We treat the integral $I_1$ using the method of non-stationary phase. We write, for some $c > 0$ to be determined later (and related to $C_0$ introduced above) 
        \begin{align*}
            I_1 &= \underbrace{\lambda^{\frac{n - 1}{2}} \int_{d_{\partial M}(x, x_1) \geq c\lambda^{-\frac{1}{2}}} e^{\I \lambda \Phi} a_0^{(1), +} \overline{a}_0^- \chi_1^+ \chi_2^+ \partial_{\nu}(\Phi_1^+ + \overline{\Phi}_2^+)\, d\vol}_{I_{11}:=}\\ 
            &+ \underbrace{\lambda^{\frac{n - 1}{2}} \int_{d_{\partial M}(x, x_1) \leq c\lambda^{-\frac{1}{2}}} e^{\I \lambda \Phi} a_0^{(1), +} \overline{a}_0^- \chi_1^+ \chi_2^+ \partial_{\nu}(\Phi_1^+ + \overline{\Phi}_2^+)\, d\vol}_{I_{12}:=},
        \end{align*}
        where we recall that $\Phi = \Phi_1^+ - \overline{\Phi}_2^+$. For $I_{11}$, we note that
        \begin{align}\label{eq:I-11-bound}
        \begin{split}
            |I_{11}| &\leq \mc{O}(1) \lambda^{\frac{n-1}{2}} \int_{d_{\partial M}(x, x_1) \geq c\lambda^{-\frac{1}{2}}} e^{-C\lambda d_{\partial M}(x, x_1)^2} \chi_1^+\,d\vol\\
            &\leq e^{-\frac{c^2C}{2}} \mc{O}(1) \lambda^{\frac{n-1}{2}} \int_{d_{\partial M}(x, x_1) \geq c\lambda^{-\frac{1}{2}}} e^{-\frac{C}{2}\lambda d_{\partial M}(x, x_1)^2} \chi_1^+\,d\vol\\
            &\leq\mc{O}(1) e^{-\frac{c^2C}{2}} < \frac{C_2}{4},
        \end{split}
        \end{align}
        where in the first inequality we used Proposition \ref{prop:phase-proposition}, and in the second one we used the lower bound on $d_{\partial M}(x, x_1)$, while in the third one we used the bound similar to \eqref{eq:usual-bound}. In the last inequality we took $c$ uniformly large enough, where we recall $C_2$ was introduced in the preceding step.

        To bound $I_{12}$, we first compute
        \begin{align*}
            \nabla_{\partial M} \Re \Phi(x) &= \mc{P}^{\partial M}_{x_1 \to x}\nabla_{\partial M} \Re \Phi(x_1) + \mc{O}(d_{\partial M}(x, x_1))\\ 
            &= \mc{P}^{\partial M}_{x_1 \to x} (\Pi \dot{\gamma}_1 - \mc{P}^{\partial M}_{x_2 \to x_1} \Pi \dot{\gamma}_2) + \mc{O}(d_{\partial M}(x_1, x_2)) + \mc{O}(d_{\partial M}(x, x_1)),
        \end{align*}
        where in both equalities we used Taylor's expansion for $\nabla_{\partial M} \Re \Phi_1^+$ and $\nabla_{\partial M} \Re \Phi_2^+$. We conclude that for $d_{\partial M}(x, x_1) \leq c\lambda^{-\frac{1}{2}}$, we have
        \begin{equation}\label{eq:phase-lower-bound}
            \|\nabla_{\partial M} \Re \Phi(x)\| \geq C_0\lambda^{-\frac{1}{2}} - \mc{O}(\lambda^{-\frac{1}{2}}) - c \mc{O}(\lambda^{-\frac{1}{2}}) \geq \frac{C_0}{2} \lambda^{-\frac{1}{2}},
        \end{equation}
        where in the first estimate we also used Step 2, and the second one is valid for uniformly large enough $\lambda$, and also for $C_0$ uniformly large enough (depending on $c$). Thus for 
        \[
            L := \frac{\nabla_{\partial M} \overline{\Phi}}{|\nabla_{\partial M} \Phi|^2},
        \]
        as in Step 7 of the proof of Theorem \ref{thm_main_1}, we may compute
        \begin{align}\label{eq:I-12-bound}
        \begin{split}
            &|I_{12}| = \left|\frac{1}{\I \lambda} \lambda^{\frac{n - 1}{2}} \int_{d_{\partial M}(x, x_1) \leq c\lambda^{-\frac{1}{2}}} e^{\I \lambda \Phi} L^\top(a_0^{(1), +} \overline{a}_0^- \chi_1^+ \chi_2^+ \partial_{\nu}(\Phi_1^+ + \overline{\Phi}_2^+))\, d\vol\right.\\ 
            & \hspace{3cm}+\left.\frac{1}{\I \lambda} \lambda^{\frac{n-1}{2}} \int_{d_{\partial M}(x, x_1) = c\lambda^{-\frac{1}{2}}} e^{\I \lambda \Phi} a_0^{(1), +} \overline{a}_0^- \chi_1^+ \chi_2^+ \partial_{\nu}(\Phi_1^+ + \overline{\Phi}_2^+)\, \iota_L d\vol\right|\\
            &\leq C_0^{-2} \mc{O}(\lambda^{\frac{n - 1}{2}}) \int_{d_{\partial M}(x, x_1) \leq \mc{O}(1)} e^{-C\lambda d_{\partial M}(x, x_1)^2} \, d\vol\\ 
            &\hspace{4cm} + C_0^{-1} \mc{O}(\lambda^{\frac{1}{2}}) \mc{O}(\lambda^{\frac{n-3}{2}}) \int_{d_{\partial M}(x, x_1) = c\lambda^{-\frac{1}{2}}} e^{-C \lambda d_{\partial M}(x, x_1)^2} d\vol\\
            &\leq C_0^{-2}\mc{O}(1) + C_0^{-1} c^{n - 2} e^{-Cc^2} \mc{O}(1) < \frac{C_2}{8},
        \end{split}
        \end{align}
        where in the first equality we integrated by parts using Stokes' theorem, and $\iota_L$ denotes contraction with the vector field $L$. Note that we get a boundary term due to the fact that $\chi_1^+$ is not compactly supported in $\{x \mid d_{\partial M}(x, x_1) \leq c\lambda^{-\frac{1}{2}}\}$. We recall that $L^\top$ denotes the (real) transpose of $L$ so that by \eqref{eq:phase-lower-bound} and \eqref{eq:transpose}, we have
        \[
            |L^\top \phi(x)| \leq C_0^{-2} \mc{O}(\lambda) \|\phi\|_{C^1},\quad d_{\partial M}(x, x_1) \leq c \lambda^{-\frac{1}{2}},\quad \phi \in C^\infty_{\comp}(\supp \chi_1^+).
        \]
        In the first inequality we used this bound, together with Proposition \ref{prop:phase-proposition} and the uniform bound on the integrand; we also used that $\|L\|_{C^0} = C_0^{-1}\mc{O}(\lambda^{\frac{1}{2}})$ as a vector field. In the second inequality we used the usual integration argument (see \eqref{eq:usual-bound}), as well as that the $(n-2)$-dimensional volume of the geodesic sphere of radius $c\lambda^{-\frac{1}{2}}$ is proportional to $c^{n - 2}\lambda^{-\frac{n-2}{2}}$. The last inequality holds for $C_0$ and $\lambda$ uniformly large enough. Combining the estimates \eqref{eq:I-11-bound} and \eqref{eq:I-12-bound} for $I_{11}$ and $I_{12}$, and plugging inside \eqref{eq:working-identity-3'}, similarly to \eqref{eq:lower-bound-working}, we get
        \[
            0 < \frac{C_2}{2} < \frac{3C_2}{8},
        \]
        which is a contradiction. This completes the argument by contradiction, and shows that $\Lambda \leq C_0\lambda^{-\frac{1}{2}}$, so combined with Step 2, we have
        \[
            d(S_{g_1}, S_{g_2}) = \mc{O}(\lambda^{-\frac{1}{2}}).
        \]
        Before estimating the travel times, we record a consequence of the preceding estimates, namely
        \begin{equation}\label{eq:small-gradient}
            \nabla_{\partial M}\Phi(x_1) = \mc{O}(\lambda^{-\frac{1}{2}}) + \nabla_{\partial M}\Phi_1^+(x_1) - \mc{P}^{\partial M}_{x_2 \to x_1} \nabla_{\partial M}\overline{\Phi}_2^+(x_2) = \mc{O}(\lambda^{-\frac{1}{2}}),
        \end{equation}
        where in the first equality we used Taylor's expansion and Step 2, and in the second we used Step 3, as well as that the gradient of the imaginary part of the phases vanish at the exit points by Theorem \ref{thm:gbu-AC22}.

        \begin{remark}
            This step is slightly different from Step 7 of the uniqueness proof. The reason is that previously, we took the support of the Gaussian beam sufficiently small depending on $\Lambda$. This is not allowed here, because we want to make everything uniform with respect to $\mc{M}_{N, B, \delta, T}(g_0)$ and so we split $I_1$ into pieces corresponding to distance to $x_1$, and this is where the natural scale $\lambda^{-\frac{1}{2}}$ appears.
        \end{remark}
        \medskip

        \emph{Step 4: exit times.} Let us first motivate the need for the following version of the stationary phase lemma. According to \eqref{eq:small-gradient}, the phase $\Phi = \Phi_1^+ - \overline{\Phi}_2^+$ satisfies $\nabla_{\partial M} \Phi(x_1) = \mc{O}(\lambda^{-\frac{1}{2}})$, but it need not have critical points on $\partial M$. Also, in the local coordinate $y$ on $\partial M$, in the Gaussian scale $|y| = \mc{O}(\lambda^{-\frac{1}{2}})$, the linear term $\lambda \nabla_{\partial M} \Phi(x_1)\cdot y = \mc{O}(1)$ is not negligible and so a direct Taylor's expansion argument does not help. However, as we will demonstrate, there is a nearby unique critical point of an almost analytic extension of $\Phi$.

        We formulate an important remark about the uniformity of the estimates in Lemma \ref{lemma:stationary-phase} below over the set of metrics in $\mc{M}_{N, B, \delta, T}(g_0)$ and $(x, v) \in \mc{G}_\varepsilon$. Everywhere in the proof of the expansion of $I_1$, almost analytic estimates are used only to a fixed sufficiently high finite order. We may choose the (finite) almost analytic extensions so that the required derivative seminorms are controlled by finitely many seminorms of the phases and amplitudes. Taking $N$ sufficiently large therefore makes the remainder uniform. Similarly, the estimate on the uniqueness of the critical point up to negligible terms is only uniform up to $\mc{O}(\lambda^{-L})$ for $L$ fixed, after taking $N$ large enough. In summary, we take the convention that all estimates involving a superscript $\infty$, are uniform for any fixed superscript $L$, for $N$ sufficiently large.

        \begin{lemma}\label{lemma:stationary-phase}
            The following expansion holds:
            \begin{align*}
                I_1 = e^{\I \lambda \Phi(z_{\mathrm{crit}})} \frac{(2\pi)^{\frac{n-1}{2}}}{\sqrt{\det (-\I \nabla^2_{\partial M} \Phi(x_1))}} a_0^{(1), +}(x_1) \partial_\nu(\Phi_1^+ + \overline{\Phi}_2^+)(x_1) + \mc{O}(\lambda^{-\frac{1}{2}}), 
            \end{align*}
            where $z_{\mathrm{crit}} \in \mathbb{C}^{n - 1}$ is the unique solution of $\partial_z \Phi(z_{\mathrm{crit}}) = 0$ close to the origin, and where $\Phi$ denotes an almost analytic extension of the phase $\Phi|_{\mathbb{R}^{n-1}}$ in a normal coordinate chart based at $x_1$. Moreover, the critical point $z_{\mathrm{crit}}$ is unique up to a rapidly decaying term, i.e. if we choose another almost analytic extension $\Phi'$ of $\Phi$, then the corresponding critical point $z_{\mathrm{crit}}'$ satisfies
            \[
                z_{\mathrm{crit}} - z_{\mathrm{crit}}' = \mc{O}(\lambda^{-\infty}).
            \]
        \end{lemma}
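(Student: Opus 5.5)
We work in normal coordinates $y\in\mathbb{R}^{n-1}$ for $\partial M$ centred at $x_1$. In these coordinates $I_1 = \lambda^{\frac{n-1}{2}}\int e^{\I\lambda\Phi(y)} b(y)\,dy$, where $b = a_0^{(1),+}\overline{a}_0^-\chi_1^+\chi_2^+\partial_\nu(\Phi_1^+ + \overline{\Phi}_2^+)\sqrt{|g|}$. The phase satisfies $\Im\Phi \geq C^{-1}|y|^2$ near the origin by Proposition~\ref{prop:phase-proposition}, and the estimate also holds in the region $d_{\partial M}(x_2,\cdot)\lesssim\lambda^{-1/2}$ by Step 2. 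By \eqref{eq:small-gradient} we have $\Phi'(0) = \mc{O}(\lambda^{-1/2})$, and by the argument of \eqref{eq:nabla-squared-invertible} we have $\Im\Phi''(0)\geq C^{-1}$.

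The plan follows the Melin--Sj\"ostrand method. Take an almost analytic extension $\Phi(z)$, $z = y + \I w$, so that $\bar\partial_z\Phi = \mc{O}(|w|^\infty)$. Solve $\partial_z\Phi(z) = 0$ near $0$ by the implicit function theorem applied to $F(z) = \partial_z\Phi(z)$, regarded as a map $\mathbb{R}^{2(n-1)}\to\mathbb{C}^{n-1}$. Its real Jacobian is governed by $\Phi''(0)$, which is invertible with uniform bounds because $\Im\Phi''(0)$ is uniformly positive. Combined with $F(0) = \mc{O}(\lambda^{-1/2})$, this gives a unique zero $z_{\mathrm{crit}} = \mc{O}(\lambda^{-1/2})$. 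If $\Phi'$ is another extension, then $\partial_z\Phi' - \partial_z\Phi = \mc{O}(|\Im z|^\infty)$. Hence $|z'_{\mathrm{crit}} - z_{\mathrm{crit}}| \lesssim |\Im z_{\mathrm{crit}}|^L$ for any fixed $L$, which is $\mc{O}(\lambda^{-L/2})$, so the two critical points differ by $\mc{O}(\lambda^{-\infty})$ in the sense of the convention above.

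For the expansion, the first step is to reduce to the scale $|y|\leq c\lambda^{-1/2}\log\lambda$, or simply to localize with a cutoff at scale $\lambda^{-1/2}$ times a large constant. The tail is negligible by the Gaussian bound, as in \eqref{eq:I-11-bound}.

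Next, write $\Phi(y) = \Phi(z_c) + \tfrac12\langle \Phi''(z_c)(y-z_c), y-z_c\rangle + R_3(y)$, which is valid exactly for the analytic model and holds up to $\mc{O}(|\Im z_c|^\infty)$ errors in general. Here $R_3 = \mc{O}(|y - z_c|^3)$. We then deform the contour $y\mapsto y + s z_c$-type shift, or equivalently expand $e^{\I\lambda R_3} = 1 + \mc{O}(\lambda|y-z_c|^3)$. On the relevant scale the error term is $\mc{O}(\lambda^{-1/2})$ after integration against the Gaussian weight.

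We also replace $b(y)$ by $b(0)$, at cost $\mc{O}(|y|)$, which integrates to $\mc{O}(\lambda^{-1/2})$. The remaining integral is the Gaussian $\lambda^{\frac{n-1}{2}}\int e^{\I\lambda\frac12\langle A(y-z_c),y-z_c\rangle}dy$ with $A = \Phi''(z_c)$ and $\Im A>0$. By analytic continuation in the shift $z_c$ (a Cauchy contour shift, justified because $\Im$ of the quadratic form stays positive definite for small complex shifts), this equals $(2\pi)^{\frac{n-1}{2}}\det(-\I A)^{-1/2}$. Finally, we replace $A$ by $\Phi''(0)$ at cost $\mc{O}(\lambda^{-1/2})$.

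One must check that $|e^{\I\lambda\Phi(z_c)}|$ is bounded. This holds because $\Im\Phi(z_c)\geq -\mc{O}(\lambda^{-1})$: the critical value satisfies $\Im\Phi(z_c)\geq\min\Im\Phi + \mc{O}(\lambda^{-1})$ by Taylor expansion of order two about the real minimum, together with $\Phi' = \mc{O}(\lambda^{-1/2})$.

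\textbf{Main obstacle.} The main difficulty is rigour in the contour shift when $\Phi$ is only almost analytic. I would handle it as follows. First apply Taylor's formula to the real function $\Phi$ at $0$ up to order $3$, giving $\Phi(y) = \Phi(0) + \Phi'(0)y + \tfrac12\Phi''(0)y\cdot y + \mc{O}(|y|^3)$. Then discard the cubic term at cost $\mc{O}(\lambda^{-1/2})$. Next evaluate the resulting exact Gaussian integral with a linear term by completing the square explicitly; this produces $e^{\I\lambda Q(z_0)}$ with $z_0 = -\Phi''(0)^{-1}\Phi'(0)$. Finally compare $\lambda Q(z_0)$ with $\lambda\Phi(z_{\mathrm{crit}})$. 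Their difference is $\lambda\,\mc{O}(|z_0|^3 + |z_0 - z_c|\,|z_0|^2)$, which is $\mc{O}(\lambda^{-1/2})$, so this path avoids contour deformation entirely.
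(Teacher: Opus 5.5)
Your proof is correct, and for the expansion itself it takes a genuinely different and more elementary route than the paper. The construction of $z_{\mathrm{crit}}$ (inverse function theorem for $\partial_z\Phi$ with $\partial_z\Phi(0)=\mc{O}(\lambda^{-\frac12})$) and its uniqueness modulo $\mc{O}(\lambda^{-\infty})$ (via $\Phi-\Phi'=\mc{O}(|\Im z|^\infty)$ and $|\Im z_{\mathrm{crit}}|=\mc{O}(\lambda^{-\frac12})$) are essentially the paper's argument.

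The paper works at the complex critical point. It proves $\Im\Phi(z_{\mathrm{crit}})\geq C|\Im z_{\mathrm{crit}}|^2$ and moves the contour from $\mathbb{R}^{n-1}$ to $z_{\mathrm{crit}}+\mathbb{R}^{n-1}$ by Stokes' theorem. It then kills the $\bar\partial$-term using the positivity estimate \eqref{eq:job-done} along the homotopy, and only then Taylor expands and integrates.

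Your final route instead does the following:
\begin{itemize}
\item Taylor expand the real phase at the real point $0$ to second order.
\item Discard the cubic remainder at cost $\lambda^{\frac{n-1}{2}}\int\lambda|y|^3e^{-c\lambda|y|^2}\,dy=\mc{O}(\lambda^{-\frac12})$.
\item Evaluate the Gaussian with complex linear term in closed form.
\item Compare critical values.
\end{itemize}
That comparison is right: $\Phi(z_{\mathrm{crit}})-Q(z_{\mathrm{crit}})=\mc{O}(|z_{\mathrm{crit}}|^3)$, because the Taylor coefficients involving $\partial_{\bar z}$ vanish at the real point $0$. Also $Q(z_{\mathrm{crit}})-Q(z_0)=\mc{O}(|z_{\mathrm{crit}}-z_0|^2)=\mc{O}(|z_0|^4)$, since $z_0$ is critical for $Q$.

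This avoids Stokes' theorem, \eqref{eq:crit-point-imaginary-positive} and \eqref{eq:job-done} altogether. It also shows that the almost analytic extension only serves to name the phase. The explicit value $Q(z_0)=\Phi(0)-\tfrac12\langle\Phi''(0)^{-1}\Phi'(0),\Phi'(0)\rangle$, built from the boundary gradient and Hessian at $x_1$, would do equally well. It is manifestly smooth in $(x_0,v_0)$, so it could replace the chart-patching in Lemma \ref{lemma:smoothness-parameter}.

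What the paper's Melin--Sj\"ostrand route buys is robustness. It iterates to a full expansion in powers of $\lambda^{-1}$ and gives a critical value intrinsic modulo $\mc{O}(\lambda^{-\infty})$. Yours is tied to the accuracy $\mc{O}(\lambda^{-\frac12})$, which is exactly the size of $\lambda|z_{\mathrm{crit}}|^3$.

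Three small points to tighten:
\begin{itemize}
\item Discarding the cubic term also needs a Gaussian bound for $|e^{\I\lambda Q(y)}|$. This follows from $\Im Q(y)\geq -|\Im\Phi'(0)|\,|y|+c|y|^2\geq\tfrac{c}{2}|y|^2-C\lambda^{-1}$, using $\Im\Phi(0)\geq 0$.
\item Localizing at scale $C\lambda^{-\frac12}$ with $C$ a large constant only gives an error $e^{-cC^2}$, not $\mc{O}(\lambda^{-\frac12})$. However, this step is unnecessary in your final route.
\item $b(0)$ contains the factor $\overline{a_0^-}(x_1)=1+\mc{O}(\lambda^{-\frac12})$, since $a_0^-(x_2)=1$ and $d_{\partial M}(x_1,x_2)=\mc{O}(\lambda^{-\frac12})$. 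This is what makes it match the coefficient in the statement.
\end{itemize}
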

        Here $\Phi$ is an almost analytic extension of $\Phi|_{\mathbb{R}^{n-1}}$ to $\mathbb{C}^{n - 1}$ in the sense that
        \[
            \partial_{\Bar{z}} \Phi = \mc{O}(|\Im z|^\infty),
        \]
        where $\partial_{\Bar{z}}$ and $\partial_z$ denote Wirtinger's derivatives.         
        \begin{proof}
            The statement and its proof are almost the same as the leading term expansion in \cite[Theorem 2.3]{Melin-Sjostrand-75}, but the assumptions are slightly different; therefore, we give a self-contained proof based on almost analytic functions, closely following \cite{Melin-Sjostrand-75}. We simplified the argument slightly as we only need the leading term expansion. Note first that the phase $\Phi$ in our case does not necessarily have any critical points. Let us write $b$ for the amplitude of $I_1$, and denote by the letter $\Phi$ and $b$ the almost analytic extensions of the same functions to $\mathbb{C}^{n-1}$, i.e. satisfying
            \[
                \partial_{\Bar{z}} \Phi = \mc{O}(|\Im z|^{\infty}),\quad \partial_{\Bar{z}} b = \mc{O}(|\Im z|^{\infty}).
            \]
            We first locate the critical point by using Taylor's expansion with Wirtinger's derivatives around zero
            \[
                \partial_z \Phi(z) = \partial_z \Phi(0) + \partial_z^2 \Phi(0) z + \mc{O}(|z|^2),
            \]
            where we also used that $\partial_{\Bar{z} z}^2 \Phi(0) = \partial_{\Bar{z}}^2\Phi(0) = 0$ following from almost analyticity. This implies, since $\partial_z^2 \Phi(0) = \partial_x^2\Phi(0) = \nabla^2 \Phi(0)$ is invertible by \eqref{eq:nabla-squared-invertible} and the lines around it, that the critical point needs to satisfy
            \begin{equation}\label{eq:implicit-equation}
                z = -[\partial_z^2 \Phi(0)]^{-1} \partial_z \Phi(0) + \mc{O}(|z|^2).
            \end{equation}
            Note that $\partial_z \Phi(0) = \partial_x \Phi(0) = \mc{O}(\lambda^{-\frac{1}{2}})$ by \eqref{eq:small-gradient}, so by the contraction mapping principle there is a unique solution $z_{\mathrm{crit}}$ of this equation close to the origin satisfying 
            \begin{equation}\label{eq:critical-point}
                |z_{\mathrm{crit}}| = \mc{O}(\lambda^{-\frac{1}{2}}).
            \end{equation} 
            (Equivalently, we may use the implicit function theorem directly to deduce the existence of $z_{\mathrm{crit}}$.)
            
            By using Taylor's expansion at $z = z_{\mathrm{crit}}$ of the phase, we get
            \begin{equation}\label{eq:taylor-expansion-crit}
                \Phi(z) = \Phi(z_{\mathrm{crit}}) + \frac{1}{2}\partial_z^2\Phi(z_{\mathrm{crit}})(z - z_{\mathrm{crit}}) \cdot (z - z_{\mathrm{crit}}) + \underbrace{R_3 + R_{\geq 4} + Q}_{R :=},
            \end{equation}
            where we write
            \[
                R_3 := \frac{1}{6} \partial_z^3 \Phi (z_{\mathrm{crit}}) (z - z_{\mathrm{crit}}, z - z_{\mathrm{crit}}, z - z_{\mathrm{crit}}),
            \]
            and $R_{\geq 4}$ for the remainder in the Taylor expansion of degree $\geq 4$; also, $Q$ by definition contains all terms in the Taylor expansion up to order $\leq 3$, which contain a $\partial_{\Bar{z}}$ derivatives. We also used that $z_{\mathrm{crit}}$ satisfies $\partial_z \Phi(z_{\mathrm{crit}}) = 0$. By almost analyticity of $\Phi$ and properties of Taylor's expansion, we have
            \begin{equation}\label{eq:taylor-easy}
                Q = \mc{O}(|\Im z_{\mathrm{crit}}|^{\infty}),\quad R_{\geq 4} = \mc{O}(|z - z_{\mathrm{crit}}|^4),\quad R_3 = \mc{O}(|z - z_{\mathrm{crit}}|^3).
            \end{equation}
             (Note that once an almost analytic extension is chosen, the critical point is fixed, but together with \eqref{eq:critical-point} we will be able to use the decay in $\lambda$ of the remainder to our favour.) Using this expansion and varying $z \in \mathbb{R}^{n - 1}$, it is possible to show (exactly the same as in \cite[Proof of Lemma 2.1]{Melin-Sjostrand-75})
            \begin{equation}\label{eq:crit-point-imaginary-positive}
                \Im \Phi(z_{\mathrm{crit}}) \geq C|\Im z_{\mathrm{crit}}|^2,
            \end{equation}
            for some uniform $C > 0$.

            Let us shift the contour of the integral using Stokes' theorem to a submanifold of $\mathbb{C}^{n - 1}$ containing $z_{\mathrm{crit}}$. Consider the affine homotopy 
            \[
                \Gamma: [0, 1] \times \mathbb{R}^{n - 1} \to \mathbb{C}^{n - 1},\quad \Gamma(s, x) := x + \Re z_{\mathrm{crit}} + \I s \Im z_{\mathrm{crit}},
            \]
            so that
            \begin{equation}\label{eq:stokes-1}
                \lambda^{-\frac{n-1}{2}} I_1 = \int_{\mathbb{R}^{n - 1}} e^{\I \lambda \Phi} b\, dx =  \int_{\mathbb{R}^{n - 1}} e^{\I \lambda \Phi} b\, dz = \int_{\mathbb{R}^{n - 1} + z_{\mathrm{crit}}}  e^{\I \lambda \Phi} b\, dz + \underbrace{\int_{\Gamma} \Bar{\partial}(e^{\I \lambda \Phi} b) \wedge dz}_{S :=},
            \end{equation}
            where in the last equality we used Stokes' theorem. We claim that $S = \mc{O}(\lambda^{-\infty})$ so that this term does not affect the stationary phase expansion. To do this, we estimate the imaginary part of $\Phi$ as follows. Taking imaginary parts of the Taylor expansion \eqref{eq:taylor-expansion-crit}, we get
            \begin{align*}
                &\Im \Phi(x + \Re z_{\mathrm{crit}} + \I s \Im z_{\mathrm{crit}}) = \Im \Phi(z_{\mathrm{crit}}) + \frac{1}{2} \Im [\partial_z^2 \Phi(z_{\mathrm{crit}})(x + \I (s - 1) \Im z_{\mathrm{crit}}, x + \I (s - 1) \Im z_{\mathrm{crit}})]\\ &+ \Im R(x + \Re z_{\mathrm{crit}} + \I s \Im z_{\mathrm{crit}})\\
                &= \Im \Phi(z_{\mathrm{crit}}) + \frac{1}{2} [\Im \partial_z^2 \Phi](z_{\mathrm{crit}})(x, x) - \frac{1}{2}(s - 1)^2[\Im \partial_z^2 \Phi](z_{\mathrm{crit}})(\Im z_{\mathrm{crit}}, \Im z_{\mathrm{crit}})\\ 
                &+ [\Re \partial^2_z\Phi](z_{\mathrm{crit}}) ((s-1)\Im z_{\mathrm{crit}}, x) + \Im R(x + \Re z_{\mathrm{crit}} + \I s \Im z_{\mathrm{crit}}).
            \end{align*}
            Inserting $s = 0$ in the previous equality we get
            \begin{align*}
                &\Im \Phi(x + \Re z_{\mathrm{crit}}) = \Im \Phi(z_{\mathrm{crit}}) + \frac{1}{2} [\Im \partial_z^2 \Phi](z_{\mathrm{crit}})(x, x) - \frac{1}{2}[\Im \partial_z^2 \Phi](z_{\mathrm{crit}})(\Im z_{\mathrm{crit}}, \Im z_{\mathrm{crit}})\\ 
                &- [\Re \partial^2_z\Phi](z_{\mathrm{crit}}) (\Im z_{\mathrm{crit}}, x) + \Im R(x + \Re z_{\mathrm{crit}}).
            \end{align*}
            Multiplying the second equality by $(s - 1)$ and adding to the first one, we obtain
            \begin{align*}
                &\Im \Phi(x + \Re z_{\mathrm{crit}} + \I s \Im z_{\mathrm{crit}}) + (s - 1)\Im \Phi(x + \Re z_{\mathrm{crit}}) =\\
                &= s \Im \Phi(z_{\mathrm{crit}}) + \frac{1}{2}s [\Im \partial_z^2 \Phi](z_{\mathrm{crit}})(x, x) - \frac{1}{2} s(s - 1) [\Im \partial_z^2 \Phi](z_{\mathrm{crit}})(\Im z_{\mathrm{crit}}, \Im z_{\mathrm{crit}})\\ 
                &+ (s - 1)\Im R(x + \Re z_{\mathrm{crit}}) + \Im R(x + \Re z_{\mathrm{crit}} + \I s \Im z_{\mathrm{crit}}).
            \end{align*}
            Since $\partial_z^2\Phi(0) = \partial_x^2 \Phi(0)$, by continuity we may assume that $[\Im \partial_z^2\Phi](z_{\mathrm{crit}}) > 0$ (and also using \eqref{eq:critical-point}). We therefore have
            \begin{align}\label{eq:job-almost-done}
            \begin{split}
                \Im \Phi(\Gamma(s, x)) &\geq Cs(|x|^2 + |\Im z_{\mathrm{crit}}|^2) + s \mc{O}(|x|^3 + |\Im z_{\mathrm{crit}}|^3) - |\Im R(\Gamma(s, x)) - \Im R(\Gamma(0, x))|\\
                &\geq \frac{C}{2}s(|x|^2 + |\Im z_{\mathrm{crit}}|^2) - |\Im R(\Gamma(s, x)) - \Im R(\Gamma(0, x))|,
            \end{split}
            \end{align}
            where we used that $\Im \Phi|_{\mathbb{R}^{n - 1}} \geq 0$, that $s \in [0, 1]$, as well as \eqref{eq:taylor-easy}, and \eqref{eq:crit-point-imaginary-positive}. In the second inequality we absorbed the higher order terms, by taking $\lambda$ large enough to treat the $|\Im z_{\mathrm{crit}}|^3$ based on \eqref{eq:critical-point}, and taking $x$ uniformly small (which corresponds to taking $\delta_{\mathrm{GB}}$ small enough, i.e. the support of $b$). We are left to estimate the last term; recall that $R = R_3 + R_{\geq 4} + Q$ which we estimate term by term. The term corresponding to $Q$ is estimated using the mean value theorem and \eqref{eq:taylor-easy}; note that $Q$ consists of polynomials in $z$ and $\Bar{z}$, so the $s$-derivatives of $Q$ are bounded as $|\Im z_{\mathrm{crit}}|^\infty$. The term corresponding to $R_{\geq 4}$ is again estimated using the mean value theorem and the estimate 
            \[
                |\partial_y R_{\geq 4}(\Gamma(s', x))| \leq C(|x|^3 + |\Im z_{\mathrm{crit}}|^3),\quad s' \in [0, 1],
            \]
            which follows directly from the integral formula for the remainder in Taylor's expansion. Finally, the term corresponding to $R_3$ is bounded directly using the formula
            \begin{align*}
                &\Im [\partial_z^3 \Phi(z_{\mathrm{crit}})(x -\I \Im z_{\mathrm{crit}} + \I s \Im z_{\mathrm{crit}}, x -\I \Im z_{\mathrm{crit}} + \I s \Im z_{\mathrm{crit}}, x -\I \Im z_{\mathrm{crit}} + \I s \Im z_{\mathrm{crit}})]\\ 
                & -\Im [\partial_z^3 \Phi(z_{\mathrm{crit}})(x - \I \Im z_{\mathrm{crit}}, x - \I \Im z_{\mathrm{crit}}, x - \I \Im z_{\mathrm{crit}})]\\
                &= s\mc{O}(|x|^3 + |\Im z_{\mathrm{crit}}|^3),
            \end{align*}
            where we simply observe that the constant terms cancel (as a polynomial in $s$). Therefore the last term in the expansion of $\Im \Phi$ in \eqref{eq:job-almost-done} can be absorbed into the leading term, and we get
            \begin{equation}\label{eq:job-done}
                \Im \Phi(\Gamma(s, x)) \geq Cs (|x|^2 + |\Im z_{\mathrm{crit}}|^2),\quad s \in [0, 1], x \in \supp(b|_{\mathbb{R}^{n-1}}).
            \end{equation}

            We may therefore using \eqref{eq:job-done} and almost analyticity bound 
            \begin{align*}
                |\partial_{\Bar{z}} (e^{\I \lambda \Phi} b)(\Gamma(s, x))| &= |e^{\I \lambda \Phi}(\I \lambda \partial_{\Bar{z}}\Phi b + \partial_{\Bar{z}} b)(\Gamma(s, x))|\\ 
                &\leq \mc{O}_N(1)e^{-C\lambda s |\Im z_{\mathrm{crit}}|^2} (s^2|\Im z_{\mathrm{crit}}|^2 \lambda)^N \lambda^{1 - N} = \mc{O}(\lambda^{1 - N}),
            \end{align*}
            for any $N$, where in the final estimate we used also that $s^2 \leq s$, thus proving the claim that $S = \mc{O}(\lambda^{-\infty})$. Thus \eqref{eq:stokes-1} gives
            \begin{align*}
                \lambda^{-\frac{n-1}{2}}I_1 + \mc{O}(\lambda^{-\infty}) &= \int_{z_{\mathrm{crit}} + \mathbb{R}^{n - 1}} e^{\I \lambda \Phi} b\, dz\\ 
                &= e^{\I \lambda \Phi(z_{\mathrm{crit}})} \int_{\mathbb{R}^{n - 1}} e^{\I \lambda (\frac{1}{2}\partial_z^2 \Phi(z_{\mathrm{crit}})(x, x) + R(x + z_{\mathrm{crit}}))} b(x + z_{\mathrm{crit}})\, dx.
            \end{align*}
            Let us now estimate the remainder term. Namely, we have
            \begin{align*}
                &|e^{\I \lambda R(x + z_{\mathrm{crit}})} - 1| = \left|\int_{0}^1 e^{\I \lambda s R(x + z_{\mathrm{crit}})} \I \lambda R(x + z_{\mathrm{crit}})\, ds\right|\\
                &\leq e^{\lambda \mc{O}(|x|^3 + |\Im z_{\mathrm{crit}}|^3)} \lambda \mc{O}(|x|^3 + |\Im z_{\mathrm{crit}}|^3),
            \end{align*}
            where we used \eqref{eq:taylor-easy} in the last bound. Therefore,
            \begin{align*}
                &\lambda^{\frac{n-1}{2}}\left|e^{\I \lambda \Phi(z_{\mathrm{crit}})} \int_{\mathbb{R}^{n - 1}} e^{\I \lambda (\frac{1}{2}\partial_z^2 \Phi(z_{\mathrm{crit}})(x, x))} (e^{\I \lambda R(x + z_{\mathrm{crit}})} -1) b(x + z_{\mathrm{crit}})\, dx\right|\\
                &\leq \lambda^{\frac{n-1}{2}}\int_{\mathbb{R}^{n-1}} e^{-C\lambda(|x|^2 + |\Im z_{\mathrm{crit}}|^2)} e^{\lambda \mc{O}(|x|^3 + |\Im z_{\mathrm{crit}}|^3)} \lambda \mc{O}(|x|^3 + |\Im z_{\mathrm{crit}}|^3)\, dx\\
                &\leq \lambda^{-\frac{1}{2}} \lambda^{\frac{n-1}{2}}\int_{\mathbb{R}^{n-1}} e^{-C\lambda(|x|^2 + |\Im z_{\mathrm{crit}}|^2)} \mc{O}([\lambda(|x|^2 + |\Im z_{\mathrm{crit}}|^2)]^{\frac{3}{2}})\, dx = \mc{O}(\lambda^{-\frac{1}{2}}),
            \end{align*}
            where in the first bound we applied \eqref{eq:crit-point-imaginary-positive} and used that $[\Im \partial_z^2 \Phi] (z_{\mathrm{crit}}) > 0$, and in the last line we absorbed the cubic terms, and estimated the integral in the usual way (see e.g. \eqref{eq:usual-bound}). Therefore,
            \begin{align*}
                I_1 &= \mc{O}(\lambda^{-\frac{1}{2}}) + e^{\I \lambda \Phi(z_{\mathrm{crit}})} \lambda^{\frac{n-1}{2}}\int_{\mathbb{R}^{n - 1}} e^{\frac{1}{2}\I \lambda \partial_z^2 \Phi(z_{\mathrm{crit}})(x, x)} b(x + z_{\mathrm{crit}})\, dx\\
                &= \mc{O}(\lambda^{-\frac{1}{2}}) + e^{\I \lambda \Phi(z_{\mathrm{crit}})} \frac{(2\pi)^{\frac{n-1}{2}}}{\sqrt{\det(-\I \partial_z^2 \Phi(z_{\mathrm{crit}})})} b(z_{\mathrm{crit}})\\
                &= \mc{O}(\lambda^{-\frac{1}{2}}) + e^{\I \lambda \Phi(z_{\mathrm{crit}})} \frac{(2\pi)^{\frac{n-1}{2}}}{\sqrt{\det(-\I \nabla^2_{\partial M} \Phi(x_1)})} a_0^{(1), +}(x_1) \partial_\nu(\Phi_1^+ + \overline{\Phi}_2^+)(x_1),
            \end{align*}
            where in the second line we used the stationary phase expansion for a quadratic phase, while in the third line we used Taylor's expansion and \eqref{eq:critical-point}. Note that $\partial_x^2 \Phi(0) = \partial_z^2 \Phi(0)$ is identified with $\nabla^2_{\partial M} \Phi(x_1)$ since we are in normal coordinates at $x_1$. This completes the proof of the expansion.

            To show the uniqueness part up to negligible error of the critical point, observe first that
            \[
                \Phi(z) - \Phi'(z) = \mc{O}(|\Im z|^\infty),
            \]
            since both $\Phi$ and $\Phi'$ are almost analytic and they agree on $\mathbb{R}^{n-1}$. Let $F$ and $F'$ denote the right hand sides in the implicit equation \eqref{eq:implicit-equation}. Then 
            \[
                |F(z) - F(z')| \leq \frac{1}{2}|z - z'|,\quad |F'(z) - F'(z')| \leq \frac{1}{2}|z - z'|,
            \]
            for small $z, z'$, since the gradients of $F$ and $F'$ are zero at the origin. We also have
            \[
                F(z) - F'(z) = \mc{O}(|\Im z|^\infty), 
            \]
            by the corresponding property of $\Phi$ and $\Phi'$. Notice that by definition $F(z_{\mathrm{crit}}) = z_{\mathrm{crit}}$ and $F'(z'_{\mathrm{crit}}) = z'_{\mathrm{crit}}$, so
            \begin{align}\label{eq:uniqueness-crit-pt}
            \begin{split}
                |z_{\mathrm{crit}} - z'_{\mathrm{crit}}| &\leq |F(z_{\mathrm{crit}}) - F(z_{\mathrm{crit}}')| + |F(z_{\mathrm{crit}}') - F'(z_{\mathrm{crit}}')|\\
                &\leq \frac{1}{2}|z_{\mathrm{crit}} - z_{\mathrm{crit}}'| + \mc{O}(\lambda^{-\infty}),
            \end{split}
            \end{align}
            where in the last inequality we used \eqref{eq:critical-point}. This implies the claim, and completes the proof of the lemma.            
        \end{proof}
        \begin{remark}\rm
            Lemma \ref{lemma:stationary-phase} admits a generalisation to an infinite expansion into negative powers of $\lambda$; the proof is by iteration of the given argument, see also \cite[Proof of Theorem 2.3]{Melin-Sjostrand-75}.
        \end{remark}

        We need a version of the preceding lemma where the exponent in the leading term asymptotics depends nicely on the parameter $(x, v) \in \mc{G}_{\varepsilon}$. 

        \begin{lemma}\label{lemma:smoothness-parameter}
            The function $\Phi(z_{\mathrm{crit}})$ appearing as the phase of the asymptotic expansion in Lemma \ref{lemma:stationary-phase} can be chosen to depend smoothly on $(x, v) \in \mc{G}_{\varepsilon}$. Moreover, we can choose this function $\Psi = \Psi(x, v)$ so that $\Psi \equiv 0$ near the boundary of $\mc{G}_\varepsilon$, so it extends smoothly by zero to $\partial_-SM$. Also, it holds that
            \[
                \Psi(x, v) = \ell_{g_1}(x, v) - \ell_{g_2}(x, v) + \mc{O}(\lambda^{-\frac{1}{2}}),\quad (x, v) \in \partial_-SM,
            \]
            as well as
            \[
                |\Im \Psi| = \mc{O}(\lambda^{-1}).
            \]
        \end{lemma}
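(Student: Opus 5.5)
We realise $\Psi$ by choosing the almost analytic extension in a uniform, parameter-dependent way. For $(x,v)\in\mc{G}_\varepsilon$ we use normal coordinates on $\partial M$ centred at $x_1=\pi S_{g_1}(x,v)$; with respect to $g_0$ these depend smoothly on $x_1$, hence on $(x,v)$. In these coordinates we extend $\Phi=\Phi_1^+-\overline{\Phi}_2^+$ almost analytically by a fixed Borel-type (finite-order, per the convention preceding Lemma \ref{lemma:stationary-phase}) construction $\widetilde\Phi(z)=\sum_{|\alpha|\le L}\partial_x^\alpha\Phi(\Re z)(\I\Im z)^\alpha\chi(\Im z)/\alpha!$. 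The phases $\Phi_i^\pm$ solve ODEs along geodesics with smooth dependence on the initial data (as in \S\ref{ssec:uniformity}), so $\widetilde\Phi$ depends smoothly on $(x,v)$. The critical point $z_{\mathrm{crit}}(x,v)$ is the fixed point of the contraction \eqref{eq:implicit-equation}, equivalently it is given by the implicit function theorem applied to $\partial_z\widetilde\Phi=0$. Since $\nabla^2_{\partial M}\Phi(x_1)$ is uniformly invertible by \eqref{eq:nabla-squared-invertible}, $z_{\mathrm{crit}}$ is smooth in the parameters. We then set $\Psi_0:=\widetilde\Phi(z_{\mathrm{crit}})$.

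Next we handle the boundary of $\mc{G}_\varepsilon$. Near it, the geodesics for $g_1$ and $g_2$ coincide with those for $g_0$, because they stay in the collar where the metrics agree; this is the same observation used in Step 8 of the proof of Theorem \ref{thm_main_1}. There $\Phi_1^+=\Phi_2^+$, so $\Phi=2\I\Im\Phi_2^+$ is purely imaginary with critical point $0$ and $\Phi(0)=0$. We choose the extension so that in this case $z_{\mathrm{crit}}=0$ exactly, for instance by subtracting the odd-in-$\Im z$ terms, or simply by checking that the real symmetric construction above gives $\partial_z\widetilde\Phi(0)=\partial_x\Phi(0)=0$. Then $\Psi_0=0$ near $\partial\mc{G}_\varepsilon$. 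If necessary, we multiply by a smooth cutoff in $(x,v)$ equal to $1$ away from the region where the geodesics agree; this changes nothing, since $\Psi_0$ already vanishes there. So $\Psi:=\Psi_0$ extends by zero.

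To compare $\Psi$ with $\ell_{g_1}-\ell_{g_2}$, we Taylor expand around $0$:
\[
\widetilde\Phi(z_{\mathrm{crit}})=\Phi(0)+\partial_z\Phi(0)z_{\mathrm{crit}}+\mc{O}(|z_{\mathrm{crit}}|^2).
\]
Here $\Phi(0)=\Phi_1^+(x_1)-\overline{\Phi_2^+(x_1)}$. Theorem \ref{thm:gbu-AC22} gives $\Phi_1^+(x_1)=\ell_{g_1}(x,v)$. Since $d_{\partial M}(x_1,x_2)=\mc{O}(\lambda^{-1/2})$ by Step 2, and $\nabla\Im\Phi_2^+(x_2)=0$, $\Im\Phi_2^+(x_2)=0$, we get $\Phi_2^+(x_1)=\ell_{g_2}(x,v)+\mc{O}(\lambda^{-1/2})$. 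By \eqref{eq:small-gradient} and \eqref{eq:critical-point} the remaining terms are $\mc{O}(\lambda^{-1})$. Together this gives $\Psi=\ell_{g_1}-\ell_{g_2}+\mc{O}(\lambda^{-1/2})$.

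The main obstacle is the bound $|\Im\Psi|=\mc{O}(\lambda^{-1})$, since the naive expansion above only controls $\Im\Psi$ at order $\lambda^{-1/2}$. The first step is to compute exactly to second order:
\[
\Psi=\Phi(0)-\tfrac12\langle[\partial_z^2\Phi(0)]^{-1}\partial_z\Phi(0),\partial_z\Phi(0)\rangle+\mc{O}(\lambda^{-3/2}).
\]
The correction term is already $\mc{O}(\lambda^{-1})$, so it remains to show $\Im\Phi(0)=\Im\Phi_1^+(x_1)+\Im\Phi_2^+(x_1)=\mc{O}(\lambda^{-1})$. The first summand vanishes. The second is $\Im\Phi_2^+(x_1)=\mc{O}(d_{\partial M}(x_1,x_2)^2)=\mc{O}(\lambda^{-1})$, because $\Im\Phi_2^+$ vanishes to second order at $x_2$ (Proposition \ref{prop:phase-proposition}). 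The lower bound \eqref{eq:crit-point-imaginary-positive} also gives $\Im\Psi\ge0$, which is consistent with this.

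We also check that the real part from $\Phi_2^+(x_1)$ is $\ell_{g_2}+\mc{O}(\lambda^{-1/2})$. This follows from $\nabla\Re\Phi_2^+$ being bounded, which is exactly the weaker estimate claimed in the statement.
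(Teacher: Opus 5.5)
Your proof is correct. Three of its four parts follow the paper's argument. These are the vanishing near $\partial\mc{G}_\varepsilon$ (via $\partial_z\widetilde\Phi(0)=\partial_x\Phi(0)=0$ when $\Phi_1^+=\Phi_2^+$), the comparison with $\ell_{g_1}-\ell_{g_2}$ by Taylor expansion at $0$, and the bound on $\Im\Psi$ through $\Im\Phi(0)=\Im\Phi_2^+(x_1)=\mc{O}(d_{\partial M}(x_1,x_2)^2)$.

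The genuine difference is how you obtain a smooth, well-defined $\Psi$. The paper covers $\mc{G}_\varepsilon$ by finitely many sets whose exit points lie in fixed boundary charts, defines $\Psi_i$ chart by chart, and glues with a partition of unity. It then proves $\Psi_i=\Psi_j+\mc{O}(\lambda^{-\infty})$ on overlaps, by extending the transition maps almost analytically and invoking the uniqueness clause of Lemma \ref{lemma:stationary-phase}. You instead use one chart moving smoothly with $x_1$ and one explicit extension recipe. Smoothness then follows from the implicit function theorem, no chart comparison is needed, and your $\Psi$ is literally the critical value of Lemma \ref{lemma:stationary-phase} as formulated there (normal coordinates at $x_1$). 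The paper's route buys something different: the critical value is intrinsic modulo $\mc{O}(\lambda^{-\infty})$, independent of chart and extension.

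One point you should make explicit. Normal coordinates at $x_1$ require an orthonormal frame of $T_{x_1}\partial M$, and a global smooth choice along $(x,v)\mapsto x_1$ need not exist (think of $\partial M=\mathbb{S}^2$). This is harmless. Wherever $\chi=1$, your truncated sum equals $\sum_{k\le L}\frac{1}{k!}d^k\Phi(\Re z)[(\I\Im z)^{\otimes k}]$, which is covariant under linear changes of variables. Hence the critical point in a small ball around the origin, and the critical value, do not depend on the frame, and smoothness can be checked with local frames.

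Two minor remarks. First, drop the option of ``subtracting the odd-in-$\Im z$ terms''. It would destroy almost analyticity, since it removes the linear term $\I\Im z\cdot\partial_x\Phi$; your second justification, $\partial_z\widetilde\Phi(0)=\partial_x\Phi(0)=0$, is the right one. Second, the second-order formula for $\Psi$ is unnecessary. Your own first-order expansion already gives $\Psi=\Phi(0)+\mc{O}(\lambda^{-1})$, and the $\lambda^{-\frac12}$ loss comes only from $\Re\Phi_2^+(x_1)-\Re\Phi_2^+(x_2)$. So $\Im\Psi$ is not really the obstacle you describe.
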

        \begin{proof} We use the notation from the proof of Lemma \ref{lemma:stationary-phase}. Note that Lemma \ref{lemma:stationary-phase} extends to the case where the phase and the amplitude depend on an additional parameter (in a perturbative, i.e. local sense); in this case the parameter is $(x, v) \in \mc{G}_{\varepsilon}$. Indeed, this holds directly by inspection of the proof above, similarly to \cite[Proof of Theorem 2.3]{Melin-Sjostrand-75}. We may thus cover $\mc{G}_{\varepsilon}$ by a sufficiently small finite open cover $(U_{i})_{i \in I}$, so that the exit point $x_+(x, v) \in \partial M$ for $(x, v) \in U_i$ belongs to a fixed boundary chart. On each $U_i$ we know that that the critical point $z_{\mathrm{crit}} = z_{\mathrm{crit}}(x, v) \in \mathbb{C}^{n - 1}$ of the almost analytic extension of $\Phi = \Phi_{(x, v)}$,  depends smoothly on $(x, v)$. Thus for $(x, v) \in U_i$ we may write $\Psi_i(x, v) := \Phi_{(x, v)}(z_{\mathrm{crit}}(x, v))$ again depending smoothly on $(x, v)$. Let $(\chi_i)_{i \in I}$ be a partition of unity subordinate to this cover, and set
            \[
                \Psi(x, v) := \sum_{i \in I} \chi_i(x, v) \Psi_i(x, v).
            \]
        We note here that on overlaps of the charts corresponding to $x_+(U_i)$, there is a transition function $\psi: U \to U'$ so that without loss of generality $\psi(0) = 0$. Write $\Phi_U$ and $\Phi_{U'}$ for the almost analytic extensions of $\Phi$ in these charts, and also $z_{\mathrm{crit}, U}$ and $z_{\mathrm{crit}, U'}$ for the corresponding critical points. Extend $\psi$ to an almost analytic map denoted by the same letter; by the inverse function theorem, $\psi$ is locally a diffeomorphism near $0 \in \mathbb{C}^{n - 1}$. By the chain rule for Wirtinger's derivatives, $\psi_* \Phi_U$ is an almost analytic function, and $\partial_z \psi_* \Phi_U(\psi(z_{\mathrm{crit}, U})) = \mc{O}(\lambda^{-\infty})$. It follows similarly to \eqref{eq:uniqueness-crit-pt} that $\psi(z_{\mathrm{crit}, U})$ is $\mc{O}(\lambda^{-\infty})$ close to the actual critical point of $\psi_*\Phi_U$, and so by uniqueness of critical points proved in Lemma \ref{lemma:stationary-phase}, we have that
        \[
            \psi(z_{\mathrm{crit}, U}) - z_{\mathrm{crit}, U'} = \mc{O}(\lambda^{-\infty}).
        \]
        Therefore, we get that
        \begin{align*}
            &|\Phi_U(z_{\mathrm{crit}, U}) - \Phi_{U'}(z_{\mathrm{crit}, U'})| = |\psi_*\Phi_{U}( \psi(z_{\mathrm{crit}, U})) - \Phi_{U'}(z_{\mathrm{crit}, U'})|\\
            &\leq |\psi_*\Phi_{U}( \psi(z_{\mathrm{crit}, U})) - \psi_*\Phi_{U}(z_{\mathrm{crit}, U'})| + |\psi_*\Phi_{U}( z_{\mathrm{crit}, U'}) - \Phi_{U'}(z_{\mathrm{crit}, U'})|\\ 
            &\leq  \mc{O}(1) |\psi(z_{\mathrm{crit}, U}) - z_{\mathrm{crit}, U'}| + \mc{O}( |\Im z_{\mathrm{crit}, U'}|^\infty)\\ 
            &= \mc{O}(\lambda^{-\infty}),
        \end{align*}
        where in the third line we used that $\psi_* \Phi_U$ and $\Phi_{U'}$ agree on $\mathbb{R}^{n - 1} \cap U'$ and that they are almost analytic extensions, and in the last line we used \eqref{eq:critical-point}. Note that there are only finitely many fixed charts involved; this shows that $\Phi(z_{\mathrm{crit}})$ is in fact invariantly defined modulo a uniform error $\mc{O}(\lambda^{-\infty})$. In other words on overlaps of $U_i$ and $U_j$, we have shown that $\Psi_i(x, v) = \Psi_j(x, v) + \mc{O}(\lambda^{-\infty})$, which shows that in fact $\Psi(x, v)$ is the correct phase in the leading term expansion.

            We now check the vanishing near the boundary of $\mc{G}_\varepsilon$. Indeed, by construction, for $(x, v)$ with $-g_{x}(v, \nu(x))$ near $\varepsilon$, we know that the corresponding geodesics generated by $(x, v)$ agree, and so $\Phi_1^+ = \Phi_2^+$ near the exit point, and also $z_{\mathrm{crit}} = 0$ is the unique critical point of $\Phi$ (there is also no need for an almost analytic extensions). Thus we may choose that $\Psi(x, v) = 0$ near such directions. 
            
            Let us prove the expansion as the difference of the travel times. Indeed, this holds by Taylor's expansion of $\Phi(z_{\mathrm{crit}})$, because in a coordinate chart
            \[
                \Phi(z_{\mathrm{crit}}) = \Phi(0) + \mc{O}(\lambda^{-\frac{1}{2}}) = \ell_{g_1}(x, v) - \ell_{g_2}(x, v) + \mc{O}(\lambda^{-\frac{1}{2}}), 
            \]
            where we used the fact that the first derivative of $\Phi$ is uniformly bounded and \eqref{eq:critical-point}, and that $\Phi_i^+(x_i) = \ell_{g_i}(x_0, v_0)$ for $i = 1, 2$, as well as Step 2.
            
            Finally, we prove the bound on the imaginary part. We estimate, again by using Taylor's expansion that
            \[
                \Im\Phi(z_{\mathrm{crit}}) = \Im \Phi_2^+(0) + \Im(\partial_x \Phi(0)z_{\mathrm{crit}}) + \mc{O}(\lambda^{-1}) = \mc{O}(\lambda^{-1}),
            \]
            where we used that $\Im \Phi_1^+(0) = 0$, while in the estimate we applied Taylor's expansion together with Proposition \ref{prop:phase-proposition} and Step 2 (i.e. $d_{\partial M}(x_1, x_2) = \mc{O}(\lambda^{-\frac{1}{2}})$), and we used \eqref{eq:critical-point} and \eqref{eq:small-gradient} to treat the other term. This completes the proof.
        \end{proof}
        
        We now go back to showing that the distance functions agree to some negative power in $\lambda$. We argue as in the proof of Theorem \ref{thm_main_1}, Step 8, and for brevity we omit some steps. We apply Lemma \ref{lemma:stationary-phase}, and we write $\Psi(x, v)$ for the phase constructed in Lemma \ref{lemma:smoothness-parameter} depending smoothly on $(x, v)$. Therefore,
        \begin{align*}
            &\mc{O}(\lambda^{-\frac{1}{2}}) + \mc{O}(h\|\Lambda_{g_1}^\lambda - \Lambda_{g_2}^\lambda\|_{H^1_h \to L^2}) = \frac{(2\pi)^{\frac{n-1}{2}}}{\sqrt{\det(\nabla^2_{\partial M} 2\Im \Phi_2^+(x_2))}} 2 \Re(\partial_\nu \Phi_2^+(x_2)) a_0^{(2), +}(x_2)\\
            &- \frac{(2\pi)^{\frac{n-1}{2}}}{\sqrt{\det(-\I\nabla^2_{\partial M}(\Phi_1^+ - \overline{\Phi}_2^+)(x_1))}}e^{\I \lambda \Psi(x_0, v_0)} \partial_\nu(\Phi_1^+(x_1) + \overline{\Phi}_2^+(x_2)) a_0^{(1), +}(x_1).
        \end{align*}
        Note that
        \begin{align*}
            \partial_{\nu}\Phi_1^+(x_1) &= \langle{\partial_\nu, \dot{\gamma}_1}\rangle = \sqrt{1 - |\Pi \dot{\gamma}_1|^2} = \sqrt{1 - |\Pi \dot{\gamma}_2|^2 + \mc{O}(\lambda^{-\frac{1}{2}})} = \sqrt{1 - |\Pi \dot{\gamma}_2|^2} + \mc{O}(\lambda^{-\frac{1}{2}})\\ 
            &= \langle{\partial_\nu, \dot{\gamma}_2}\rangle + \mc{O}(\lambda^{-\frac{1}{2}}) = \partial_\nu \Phi_2^+(x_2) +  \mc{O}(\lambda^{-\frac{1}{2}}),
        \end{align*}
        where we used the preceding step in the third equality. Therefore,
        \[
            0 < 2\varepsilon_+ < 2\Re(\partial_\nu \Phi_2^+(x_2)) = \partial_\nu(\Phi_1^+(x_1) + \overline{\Phi}_2^+(x_2)) + \mc{O}(\lambda^{-\frac{1}{2}}),
        \]
        and we thus get 
        \begin{align*}
            &\mc{O}(\lambda^{-\frac{1}{2}}) + \mc{O}(h\|\Lambda_{g_1}^\lambda - \Lambda_{g_2}^\lambda\|_{H^1_h \to L^2}) = \frac{1}{\sqrt{\det(\nabla^2_{\partial M} 2\Im \Phi_2^+(x_2))}} a_0^{(2), +}(x_2)\\
            &- \frac{1}{\sqrt{\det(-\I \nabla^2_{\partial M}(\Phi_1^+ - \overline{\Phi}_2^+)(x_1))}}e^{\I \lambda \Psi(x_0, v_0)} a_0^{(1), +}(x_1).
        \end{align*}
        The determinants and the amplitudes are bounded from below and above uniformly by positive constants and we may thus write
        \[
            F(x, v) e^{\I \lambda \Psi(x, v)} - 1 = \mc{O}(\lambda^{-\frac{1}{2}}) + \mc{O}(h\|\Lambda_{g_1}^\lambda - \Lambda_{g_2}^\lambda\|_{H^1_h \to L^2}),\quad (x, v) \in \mc{G}_\varepsilon,
        \]
        where $F$ is implicitly defined. Note that $F$ depends smoothly on $(x, v)$, that $C^{-1} < |F| < C$ for some uniform $C > 1$, and that $F \equiv 1$ near the boundary of $\mc{G}_\varepsilon$. Indeed, for $(x, v)$ near the boundary of $\mc{G}_\varepsilon$, the geodesics for the two metrics agree, and so do the amplitudes and phases obtained by solving ODEs along the geodesics; thus $F$ extends smoothly by $1$ to $\partial_-SM$. Note also that $\Psi$ extends smoothly by zero from $\mc{G}_\varepsilon$ to $\partial_-SM$ by Lemma \ref{lemma:smoothness-parameter}, so
        \begin{equation}
            F(x, v) e^{-\lambda \Im \Psi(x, v)} e^{\I \lambda \Re \Psi(x, v)} - 1 = \mc{O}(\lambda^{-\frac{1}{2}}) + \mc{O}(h\|\Lambda_{g_1}^\lambda - \Lambda_{g_2}^\lambda\|_{H^1_h \to L^2}),\quad (x, v) \in \partial_-SM.
        \end{equation}
        It follows that 
        \[
            |F| e^{-\lambda \Im \Psi} = 1 + \mc{O}(\lambda^{-\frac{1}{2}}) + \mc{O}(h\|\Lambda_{g_1}^\lambda - \Lambda_{g_2}^\lambda\|_{H^1_h \to L^2}),
        \]
        and so by the triangle inequality that
        \[
            e^{\I \left(\lambda \Re \Psi + \arg\left(\frac{F}{|F|}\right)\right)} - 1 = \mc{O}(\lambda^{-\frac{1}{2}}) + \mc{O}(h\|\Lambda_{g_1}^\lambda - \Lambda_{g_2}^\lambda\|_{H^1_h \to L^2}),
        \]
        where $\arg(\bullet)$ denotes the argument of $\bullet$. 
        
        As before, let $z \in \partial_-SM$ and connect $z$ by a path $\Gamma$ in the same fibre of $\partial_-SM \to \partial M$ to a point $z_0 \not \in \mc{G}_\varepsilon$. Then, as in the proof of Theorem \ref{thm_main_1}, Step 8, we get
        \[
            -\I \int_\Gamma \frac{d\left(\frac{F}{|F|}\right)}{\frac{F}{|F|}} + \lambda \Re \Psi(z) =  \mc{O}(\lambda^{-\frac{1}{2}}) + \mc{O}(h\|\Lambda_{g_1}^\lambda - \Lambda_{g_2}^\lambda\|_{H^1_h \to L^2}).
        \]
        Using that $dF$ is uniformly bounded, and expanding $\Re \Psi$ using Lemma \ref{lemma:smoothness-parameter}, as well as that $\mc{O}(h\|\Lambda_{g_1}^\lambda - \Lambda_{g_2}^\lambda\|_{H^1_h \to L^2}) < \varepsilon_0$, we conclude that
        \[
            \ell_{g_1}(z) - \ell_{g_2}(z) = \mc{O}(\lambda^{-\frac{1}{2}}).
        \]
        Since $z$ was arbitrary, this completes the proof of the theorem.
    \end{proof}

    \section{The two-solution approach}\label{section:two-solutions}

    We now give a second proof of the theorems above based on using only two solutions. The crucial difference with the first proof is that we do not use the third solution $u$ which ``goes backwards", and there is no phenomenon of backscattering.

\subsection{Uniqueness}
    \begin{proof}[Proof of Theorem \ref{thm_main_1} (simplified)]
    We divide the proof into steps. We assume the boundary determination part and the geometric setup from Step 1 and the construction of $u_1$ and $u_2$, and $r_1$ and $r_2$ from Step 2 of the three-solution proof of Theorem \ref{thm_main_1} above. We emphasise that as a consequence, $(M, g_1)$ and $(M, g_2)$ are actually identified with suitable extensions where $g_1 = g_2$ in the exterior collar (see Step 1 for details).
    \medskip
    
        \emph{Step 1: integral identity and principal terms.} Using the second integral identity of Proposition \ref{prop:extension-2}, we get similarly to Step 3 of the above proof that (by using the estimates on remainder terms)
        \[
            \Im \langle{\partial_\nu(u_1 - u_2), u_1 - u_2}\rangle_{L^2(\partial M)} = \mc{O}(\lambda^{1 - K}).
        \]
        Using that $u_1^-$ and $u_2^-$ agree, we deduce that
        \begin{equation}\label{eq:new-working-identity}
            \Im \langle{\partial_\nu(u_1^+ - u_2^+), u_1^+ - u_2^+}\rangle_{L^2(\partial M)} = \mc{O}(\lambda^{1 - K}).
        \end{equation}
        Let us now determine the principal term in \eqref{eq:new-working-identity}, similarly to Step 4 of the previous proof, by replacing $u_i^+ = \lambda^{\frac{n-1}{4}} e^{\I \lambda \Phi_i^+} a^{(i), +} \chi_i^+$, for $i = 1, 2$. Using \eqref{eq:normal-derivative-expansion}, and Proposition \ref{prop:boundary-estimate+geometry} to treat the remainder terms, we get (for $K \geq 1$)
        \begin{align*}
            \lambda^{\frac{n-1}{2}} \Im \int_{\partial M} \I (e^{\I \lambda \Phi_1^+} a_0^{(1), +} \partial_\nu \Phi_1^+ \chi_1^+& - e^{\I\lambda \Phi_2^+} a_0^{(2), +} \partial_\nu \Phi_2^+ \chi_2^+)\\
            &\cdot(\overline{a_0^{(1), +}} e^{-\I\lambda \overline{\Phi_1^+}} \chi_1^+ - \overline{a_0^{(2), +}} e^{-\I\lambda \overline{\Phi_2^+}} \chi_2^+)\, d\vol = \mc{O}(\lambda^{-1}).
        \end{align*}
        A straightforward computation reveals that
        \begin{align*}
            &\Re \int_{\partial M} \left[e^{ \I \lambda (\Phi_1^+ - \overline{\Phi_2^+})} a_0^{(1), +} \overline{a_0^{(2), +}} \partial_\nu \Phi_1^+ + e^{ \I \lambda (\Phi_2^+ - \overline{\Phi_1^+})} a_0^{(2), +} \overline{a_0^{(1), +}} \partial_\nu \Phi_2^+\right]\chi_1^+ \chi_2^+\,d\vol\\
            &= \Re \int_{\partial M} e^{\I \lambda (\Phi_1^+ - \overline{\Phi_2^+})} a_0^{(1), +} \overline{a_0^{(2), +}} \partial_\nu(\Phi_1^+ + \overline{\Phi_2^+})\chi_1^+ \chi_2^+\,d\vol,
        \end{align*}
        and so expanding the preceding equality we get
        \begin{equation}\label{eq:new-main-working-identity}
        \begin{split}
            &J := \lambda^{\frac{n-1}{2}}\int_{\partial M} \left[e^{-2\lambda \Im \Phi_1^+} |a_0^{(1), +}|^2 \Re(\partial_\nu \Phi_1^+) (\chi_1^+)^2 + e^{-2\lambda \Im \Phi_2^+} |a_0^{(2), +}|^2 \Re(\partial_\nu \Phi_2^+) (\chi_2^+)^2\right]\,d\vol\\
            &= \underbrace{\lambda^{\frac{n-1}{2}} \Re \int_{\partial M} e^{\I \lambda (\Phi_1^+ - \overline{\Phi_2^+})} a_0^{(1), +} \overline{a_0^{(2), +}} \partial_\nu(\Phi_1^+ + \overline{\Phi_2^+}) \chi_1^+ \chi_2^+ \,d\vol}_{J_{\mathrm{mix}}:=} + \mc{O}(\lambda^{-1}).
        \end{split}
        \end{equation}
    \medskip

    \emph{Step 2: equality of exit points.} Recall that $\Lambda$ was defined in \eqref{eq:Lambda1} as the supremum of distances of exit points; we assume that $\Lambda > 0$. By the arguments in Step 6 of the previous proof, i.e. the stationary phase lemma with the uniform remainder estimate as in \eqref{eq:remainder-stat-phase}, using uniformity of amplitudes, and the positive lower bound on $\Re(\partial_\nu \Phi_i^+)$ on $\supp \chi_i^+$ for $i = 1, 2$, we get that the left hand side of \eqref{eq:new-main-working-identity} is bounded from below by a uniform constant $C > 0$
    \begin{equation}\label{eq:lower-bound-XXX}
        J \geq C > 0,
    \end{equation}
    for $\lambda$ uniformly large enough. Alternatively, one could avoid using the stationary phase lemma in the proof of \eqref{eq:lower-bound-XXX}. Indeed, recall that $(x_0, v_0) \in \mc{G}_\varepsilon$ and that for $i = 1, 2$, we write $S_{g_i}(x_0, v_0) = (x_i, v_i)$. In the same way as in \eqref{eq:uniform-lower-bound-exit} (using Theorem \ref{thm:gbu-AC22} and Proposition \ref{prop:boundary-estimate+geometry}), we may assume that $\Re(\partial_\nu \Phi_i^+) > \varepsilon_+/2 > 0$ on $\partial M \cap \supp \chi_i^+$. Similarly, we get that $|a_0^{(i), +}| \geq c > 0$ for some uniform $c > 0$, over $\partial M \cap \supp \chi_i^+$. By Proposition \ref{prop:phase-proposition}, $\Im \Phi_i^+(x) \leq C_0 d_{\partial M}(x, x_i)^2$ on $\supp \chi_i^+$ for some uniform $C_0 > 0$, and so
    \[
        e^{-2\lambda \Im \Phi_i^+} \geq e^{-2C_0},\quad d_{\partial M}(x, x_i) \leq \lambda^{-\frac{1}{2}}.
    \]
    We may take $\lambda$ uniformly large so that $\chi_i^+(x) = 1$ when $d_{\partial M}(x, x_i) \leq \lambda^{-\frac{1}{2}}$. Therefore,
    \[
        J \geq \frac{c^2 \varepsilon_+}{2} e^{-2C_0} \lambda^{\frac{n-1}{2}} \sum_{i = 1}^2 \vol_{\partial M}(B_{\partial M}(x_i, \lambda^{-\frac{1}{2}})) \geq C > 0,
    \]
    where we used that the volume of the boundary ball $B_{\partial M}(x_i, \lambda^{-\frac{1}{2}})$ of radius $\lambda^{-\frac{1}{2}}$ is uniformly proportional to $\lambda^{-\frac{n-1}{2}}$. This completes the alternative proof of \eqref{eq:lower-bound-XXX}.
    
    Arguing exactly the same as in \eqref{eq:I_2-upper-bound}, we get that 
    \[
        |J_{\mathrm{mix}}| \leq \mc{O}(1) e^{-C_{\mathrm{mix}} \lambda d_{\partial M}(x_1, x_2)^2},
    \]
    for some uniform $C_{\mathrm{mix}} > 0$. Combining the previous bounds with \eqref{eq:new-main-working-identity} gives
    \[
        0 < C < \mc{O}(1) e^{-C_{\mathrm{mix}} \lambda d_{\partial M}(x_1, x_2)^2} + \mc{O}(\lambda^{-1}).
    \]
    Taking $\lambda$ large enough depending on $\Lambda$ gives a contradiction to the assumption that $\Lambda > 0$. We conclude that the exit points agree.
    \medskip

    \emph{Step 3: equality of exit directions.} Recall that the quantity $\Lambda$ defined in \eqref{eq:exit-directions} is the supremum of differences between exit vectors projected to $T\partial M$. Assuming $\Lambda > 0$ and arguing exactly in the same way as in Step 7 of the former proof, integration by parts shows that
    \[
        J_{\mathrm{mix}} = \mc{O}(\lambda^{-1}).
    \]
    This contradicts \eqref{eq:new-main-working-identity} in the same way as in the previous step for $\lambda$ uniformly large enough. This shows that the scattering data agree, i.e. $S_{g_1} = S_{g_2}$.
    \medskip

    \emph{Step 4: equality of travel times.} This follows directly from Proposition \ref{prop:known?}. Alternatively, by the stationary phase lemma, we may expand the integrals in \eqref{eq:new-main-working-identity}, as in Step 8 of the previous proof. Arguing exactly in the same way, we get
    \[
        F(x, v) - \Re(e^{\I\lambda(\ell_{g_1} - \ell_{g_2})(x, v)} F_{\mathrm{mix}}(x, v)) = \mc{O}(\lambda^{-1}),\quad (x, v) \in \mc{G}_\varepsilon,
    \]
    where, writing $(x_+, v_+) = S_{g_1}(x, v) = S_{g_2}(x, v)$ to denote the exit point and direction of $\gamma_{x, v}$, we have
    \begin{align*}
        \frac{F(x, v)}{\pi^{\frac{n-1}{2}}} &= \frac{|a_0^{(2), +}|^2(x_+)\partial_\nu \Phi_2^+(x_+)}{\sqrt{\det \nabla^2_{\partial M} \Im \Phi_2^+(x_+)}} + \frac{|a_0^{(1), +}|^2(x_+)\partial_\nu \Phi_1^+(x_+)}{\sqrt{\det \nabla^2_{\partial M} \Im \Phi_1^+(x_+)}},\\
        F_{\mathrm{mix}}(x, v) &= \left[\det\left(\frac{\nabla^2_{\partial M} \Phi(x_{+})}{2\pi \I}\right)\right]^{-\frac{1}{2}} a_0^{(1), +}(x_+) \overline{a_0^{(2), +}}(x_+) \partial_{\nu}(\Phi_1^+ + \overline{\Phi}_2^+)(x_{+}).
    \end{align*}
    Note that $F(x, v)$ and $F_{\mathrm{mix}}(x, v)$ depend smoothly on $(x,v) \in \mc{G}_{\varepsilon}$, $F$ is real-valued and uniformly bounded from below by a positive constant. Therefore, writing $H := \frac{F_{\mathrm{mix}}}{F}$, we get
    \[
        1 - \Re(e^{\I\lambda(\ell_{g_1} - \ell_{g_2})(x, v)} H(x, v)) = \mc{O}(\lambda^{-1}),\quad (x, v) \in \mc{G}_\varepsilon.
    \]
    Note that $H$ does not depend on $\lambda$, and that the previous equality is actually valid on the whole $\partial_-SM$. Indeed, for $(x, v) \not\in \mc{G}_\varepsilon$, the metrics agree with $g_0$, and so the corresponding geodesics, amplitudes and phases are the same (as they are obtained by solving an ODE along the geodesic, with the same initial data); thus $\ell_{g_1} \equiv \ell_{g_2}$ and $H \equiv 1$ outside $\mc{G}_\varepsilon$. Therefore
    \begin{equation}\label{eq:travel-time}
                1 - \Re(e^{\I\lambda(\ell_{g_1} - \ell_{g_2})(x, v)} H(x, v)) = \mc{O}(\lambda^{-1}),\quad (x, v) \in \partial_-SM.
    \end{equation}
    Also, similarly to before, since the integrands are bounded uniformly from above and below in $\mc{G}_\varepsilon$, $H$ is bounded uniformly from above and below, i.e. there exists a uniform $C > 1$ such that $C^{-1} < |H| < C$ (on $\partial_-SM$). Moreover, we must have $|H| \geq 1$, since otherwise, taking $\lambda$ large enough would contradict \eqref{eq:travel-time}.

    Let $z \in \partial_-SM$, and choose a path $\Gamma: [0, 1] \to \partial_-SM$ in the fibre of $z$, of uniformly bounded length, such that $\Gamma(0) = z_0 \not \in \mc{G}_\varepsilon$ (so that $H(z_0) = 1$) and $\Gamma(1) = z$. We may write $H \circ \Gamma = |H\circ \Gamma| e^{\I f_\Gamma}$, similarly to the discussion in Step 8 of the three-solution proof of Theorem \ref{thm_main_1}, where $f_\Gamma(0) = 0$ and $f_\Gamma$ is real-valued. By \eqref{eq:travel-time}, we get along $\Gamma$ that
    \[
        1 - |H| \cos(\lambda(\ell_{g_1} - \ell_{g_2}) + f_\Gamma) = \mc{O}(\lambda^{-1}),
    \]
    Using the elementary bound that for some universal $C > 0$ we have, for $a, b \in \mathbb{R}, \theta > 0$ that
    \[
        |\cos a - \cos b| \leq \theta \implies a + b \in [-C \sqrt{\theta}, C \sqrt{\theta}] + 2\pi \mathbb{Z}\,\,\, \mathrm{or}\,\,\, a - b \in [-C \sqrt{\theta}, C \sqrt{\theta}] + 2\pi \mathbb{Z},
    \]
    we conclude that if we set (note that $\arccos$ takes values in $[0, \pi]$)
    \[
        G_\Gamma^\pm(t) := \lambda(\ell_{g_1} - \ell_{g_2}) (\Gamma(t)) + f_\Gamma(t) \pm \arccos(|H|^{-1}(\Gamma(t))),
    \]
    then for all $t \in [0, 1]$
    \begin{equation}\label{eq:or}
        G_\Gamma^+(t) \in [-C\lambda^{-\frac{1}{2}}, C\lambda^{-\frac{1}{2}}] + 2\pi \mathbb{Z}\quad \mathrm{or}\quad G_\Gamma^-(t) \in [-C\lambda^{-\frac{1}{2}}, C\lambda^{-\frac{1}{2}}] + 2\pi \mathbb{Z}.
    \end{equation}
    Because of this branching we need an additional elementary argument compare to the three-solution proof to finish the argument. We simply observe that
    \[
        2\arccos(|H|^{-1}) \in [0, \pi],
    \]
    because $|H| \geq 1$. (In fact since $|H|^{-1}$ is uniformly bounded, $2\arccos(|H|^{-1}) \in [0, \pi - \delta_H]$ for some uniform $\delta_H > 0$; we will however not need this.) Take $\lambda$ uniformly large enough so that the intervals in the right hand sides of \eqref{eq:or} are disjoint, and the gaps have size $> \pi$. By continuity as in the proof above, we conclude that if $G_\Gamma^+$ or $G_\Gamma^-$ take value in more than one connected component of $[-C\lambda^{-\frac{1}{2}}, C\lambda^{-\frac{1}{2}}] + 2\pi \mathbb{Z}$, then there exists $t\in [0, 1]$ such that both $G_\Gamma^+(t)$ and $G_\Gamma^-(t)$ do not belong to $[-C\lambda^{-\frac{1}{2}}, C\lambda^{-\frac{1}{2}}] + 2\pi \mathbb{Z}$ (they take value in one of the gaps), contradicting \eqref{eq:or}. Since $G_\Gamma^\pm(0) = 0$ by construction, we conclude that
    \[
        \lambda(\ell_{g_1} - \ell_{g_2}) + f_\Gamma \pm \arccos(|H|^{-1}) \in [-C\lambda^{-\frac{1}{2}}, C\lambda^{-\frac{1}{2}}],
    \]
    where for simplicity, by convention here we mean that this holds for at least one choice of the $\pm$ sign. More invariantly, we get that 
    \[
        -\I \int_\Gamma \frac{d\left(\frac{H}{|H|}\right)}{\frac{H}{|H|}} + \lambda(\ell_{g_1} - \ell_{g_2})(z) \pm \arccos(|H|^{-1}(z)) =\mc{O}(\lambda^{-\frac{1}{2}}).
    \]
    Since $H$, $dH$, and the length of $\Gamma$ are uniformly bounded by construction, and $|H|$ is uniformly bounded from below by a positive constant, and $z \in \partial_-SM$ is arbitrary, we conclude that $\ell_{g_1} - \ell_{g_2} = \mc{O}(\lambda^{-1})$. Taking $\lambda > \lambda_0$ uniformly large then gives that in fact $\ell_{g_1} \equiv \ell_{g_2}$. This completes the proof.
    \end{proof}

    \subsection{Quantification of uniqueness via the two-solution approach}

    \begin{proof}[Proof of Theorem \ref{theorem:quantification} (simplified)] 
        We divide the proof into several steps. We will use the strategy from the two-solutions proof of Theorem \ref{thm_main_1}. As in the first proof of Theorem \ref{theorem:quantification}, we note that all the Gaussian beams come with estimates uniform in $\mc{M}_{N, B, \delta, T}(g_0)$; we will use the notation from this proof.
        \medskip

        \emph{Step 1: the principal terms.} Using the second integral identity of Proposition \ref{prop:extension-2}, we get
        \[
            \Im \langle{\partial_\nu(u_1 + r_1 - u_2 - r_2), u_1 + r_1 - u_2 - r_2}\rangle_{L^2(\partial M)} = \Im \langle{(\Lambda_{g_1}^\lambda - \Lambda_{g_2}^\lambda)(u_2 + r_2), u_1 + r_1}\rangle_{L^2(\partial M)}.
        \]
        Using the estimates on remainder terms, see \eqref{eq:remainder-estimate} for $K = 1$, and estimating the term involving the DN map by Cauchy-Schwarz, and that $u_2$ is bounded in $H^1_h(\partial M)$, as well as that $u_1^- = u_2^-$ (as the metrics agree near the boundary), we get
        \[
            \Im \langle{\partial_\nu(u_1^+ - u_2^+), u_1^+ - u_2^+}\rangle_{L^2(\partial M)} = \mc{O}(1) + \mc{O}(\|\Lambda_{g_1}^\lambda - \Lambda_{g_2}^\lambda\|_{H^1_h \to L^2}).
        \]
        Expanding the Gaussian beams into powers of $\lambda$, we get in the same way as in \eqref{eq:new-main-working-identity},
        \begin{equation}\label{eq:quantitative-main-working-identity}
        \begin{split}
            &\underbrace{\lambda^{\frac{n-1}{2}}\int_{\partial M} \left[e^{-2\lambda \Im \Phi_1^+} |a_0^{(1), +}|^2 \Re(\partial_\nu \Phi_1^+) (\chi_1^+)^2 + e^{-2\lambda \Im \Phi_2^+} |a_0^{(2), +}|^2 \Re(\partial_\nu \Phi_2^+) (\chi_2^+)^2\right]\,d\vol}_{J := }\\
            &- \underbrace{\lambda^{\frac{n-1}{2}}\Re \int_{\partial M} e^{\I \lambda (\Phi_1^+ - \overline{\Phi_2^+})} a_0^{(1), +} \overline{a_0^{(2), +}} \partial_\nu(\Phi_1^+ + \overline{\Phi_2^+}) \chi_1^+ \chi_2^+ \,d\vol}_{J_{\mathrm{mix}}:=}\\
            &\hspace{5cm}= \mc{O}(\lambda^{-1}) + \mc{O}(h\|\Lambda_{g_1}^\lambda - \Lambda_{g_2}^\lambda\|_{H^1_h \to L^2}).
        \end{split}
        \end{equation}
    \medskip

    \emph{Step 2: exit points.} Similarly to the two-solution uniqueness proof of Theorem \ref{thm_main_1}, and using the same uniformity arguments as in the three-solution proof of Theorem \ref{theorem:quantification}, by the stationary phase lemma there is a uniform $C > 0$ such that
    \[
        J \geq C > 0.
    \]
    Again, alternatively, one can also argue without using the stationary phase lemma here, see the discussion after \eqref{eq:lower-bound-XXX}. Similarly, using direct estimation there is $C_{\mathrm{mix}} > 0$ such that
    \[
        |J_{\mathrm{mix}}| \leq \mc{O}(1) e^{-C_{\mathrm{mix}} \lambda d_{\partial M}(x_1, x_2)^2}.
    \]
    Combining this with \eqref{eq:quantitative-main-working-identity}, we get
    \[
        0 < C < \mc{O}(1) e^{-C_{\mathrm{mix}} \lambda d_{\partial M}(x_1, x_2)^2} + \mc{O}(\lambda^{-1}) + \mc{O}(h\|\Lambda_{g_1}^\lambda - \Lambda_{g_2}^\lambda\|_{H^1_h \to L^2}).
    \]
    In the same way as in \eqref{eq:lower-bound-working}, taking $\lambda > \lambda_0$ uniformly large and $\varepsilon_0 > 0$ uniformly small, we may absorb the last two terms of the right hand side to the left, and consequently we get that
    \[
        d_{\partial M}(x_1, x_2) = \mc{O}(\lambda^{-\frac{1}{2}}).
    \]
    \medskip

    \emph{Step 3: exit directions.} This step is entirely analogous to Step 3 of the first proof of Theorem \ref{theorem:quantification}. Namely, we assume that
    \[
            \Lambda := \sup_{(x, v) \in \mc{G}_{\varepsilon}} \|\mc{P}^{\partial M}_{x_1 \to x_2} \Pi S_{g_1}(x, v) - \Pi S_{g_2}(x, v)\|_{g_0} > C_0 \lambda^{-\frac{1}{2}},
    \]
    for some $C_0 > 0$, and we split $J_{\mathrm{mix}} = J_{\mathrm{mix}, 2} + J_{\mathrm{mix}, 1}$ into pieces at distance $\lesssim \lambda^{-\frac{1}{2}}$ and its complement, respectively. Then $J_{\mathrm{mix}, 2}$ is treated by integration by parts, and $J_{\mathrm{mix}, 1}$ is directly estimated. The argument is the same as in the three-solutions proof of Theorem \ref{theorem:quantification}, because the phase is the same. A contradiction is then reached to \eqref{eq:quantitative-main-working-identity}. This shows that $\Lambda = \mc{O}(\lambda^{-\frac{1}{2}})$, and so that $d(S_{g_1}, S_{g_2}) = \mc{O}(\lambda^{-\frac{1}{2}})$.
    \medskip

    \emph{Step 4: travel times.} We argue similarly to Steps 4 of the two-solution proof of Theorem \ref{thm_main_1} and the three-solution proof of Theorem \ref{theorem:quantification}. As mentioned before, the integrals in $J$ can be expanded using the usual stationary phase lemma, but $J_{\mathrm{mix}}$ requires an application of the stationary phase in the spirit of Melin-Sj\"ostrand, Lemma \ref{lemma:stationary-phase}. Note that this lemma applies verbatim to treat (the slightly different) integral $J_{\mathrm{mix}}$ because the phases are the same. Note that the subsequent Lemma \ref{lemma:smoothness-parameter} applies verbatim and shows that we may pick the value of the phase at the critical point to smoothly depend on the parameter $(x, v) \in \mc{G}_\varepsilon$, denoted by $\Psi(x, v)$. In summary, we obtain from \eqref{eq:quantitative-main-working-identity} that
    \begin{equation}\label{eq:final-bound}
        \begin{split}
            &\mc{O}(\lambda^{-1}) + \mc{O}(h\|\Lambda_{g_1}^\lambda - \Lambda_{g_2}^\lambda\|_{H^1_h \to L^2})\\ 
            &= \frac{(2\pi)^{\frac{n-1}{2}}}{\sqrt{\det(\nabla^2_{\partial M} 2\Im \Phi_1^+(x_1))}} \partial_\nu \Phi_1^+(x_1) |a_0^{(1), +}|^2 (x_1)\\ 
            &+ \frac{(2\pi)^{\frac{n-1}{2}}}{\sqrt{\det(\nabla^2_{\partial M} 2\Im \Phi_2^+(x_2))}} \partial_\nu \Phi_2^+(x_2) |a_0^{(2), +}|^2 (x_2)\\ 
            &- \Re \frac{(2\pi)^{\frac{n-1}{2}}}{\sqrt{\det(-\I\nabla^2_{\partial M}(\Phi_1^+ - \overline{\Phi}_2^+)(x_1))}}e^{\I \lambda \Psi(x_0, v_0)} (\partial_\nu\Phi_1^+(x_1) + \partial_\nu \Phi_2^+(x_2)) a_0^{(1), +}(x_1) \overline{a_0^{(2), +}}(x_2)\\
            &+ \mc{O}(\lambda^{-\frac{1}{2}}).
        \end{split}
    \end{equation}
    Note that the last $\mc{O}(\lambda^{-\frac{1}{2}})$ gathers errors from the stationary phase estimate in Lemma \ref{lemma:stationary-phase}, and also errors obtained by Taylor's expansion of $\partial_\nu \overline{\Phi}_2^+(x_2) = \partial_\nu \overline{\Phi}_2^+(x_1) + \mc{O}(\lambda^{-\frac{1}{2}})$, where uniform estimates on the amplitude are used, as well as Step 2; similar holds for $a_0^{(2), +}(x_2)$. The same analysis as in Step 4 of the three-solution proof of Theorem \ref{theorem:quantification} shows that there is a function $H$ depending smoothly on $(x, v) \in \mc{G}_\varepsilon$, bounded uniformly from both above and below by positive constants, such that
    \begin{equation}
        1 - \Re(e^{\I \lambda \Psi(x, v)}H(x, v)) = \mc{O}(\lambda^{-\frac{1}{2}}) + \mc{O}(h\|\Lambda_{g_1}^\lambda - \Lambda_{g_2}^\lambda\|_{H^1_h \to L^2}),\quad (x, v) \in \mc{G}_\varepsilon.
    \end{equation}
    Note that thanks to Lemma \ref{lemma:smoothness-parameter} once again, $\Psi \equiv 0$ and $H \equiv 1$ near the boundary of $\mc{G}_\varepsilon$, this estimate extends to
    \begin{equation}
        1 - \Re(e^{\I \lambda \Psi(x, v)}H(x, v)) = \mc{O}(\lambda^{-\frac{1}{2}}) + \mc{O}(h\|\Lambda_{g_1}^\lambda - \Lambda_{g_2}^\lambda\|_{H^1_h \to L^2}),\quad (x, v) \in \partial_-SM.
    \end{equation}
    The argument then proceeds in the same way as before. Indeed, if $z \in \partial_-SM$ is connected by a path $\Gamma$ in the same spherical fibre as $z$ to a point $z_0 \not \in \mc{G}_\varepsilon$ (so $H(z_0) = 1$), and writing $H \circ \Gamma = |H \circ \Gamma| e^{\I f_\Gamma}$ (with $f_\Gamma(0) = 0$) we get along $\Gamma$ that
    \begin{equation}\label{eq:XXX}
        1 - |H| e^{-\lambda \Im \Psi}\cos(\lambda \Re \Psi + f_\Gamma) = \mc{O}(\lambda^{-\frac{1}{2}}) + \mc{O}(h\|\Lambda_{g_1}^\lambda - \Lambda_{g_2}^\lambda\|_{H^1_h \to L^2}).
    \end{equation}
    Note that this implies 
    \[
        |H| e^{-\lambda \Im \Psi} \geq 1 - \mc{O}(\lambda^{-\frac{1}{2}}) - \mc{O}(h\|\Lambda_{g_1}^\lambda - \Lambda_{g_2}^\lambda\|_{H^1_h \to L^2})
    \]
    which in turn shows that (for $\lambda$ uniformly large enough and for $\varepsilon_0$ uniformly small enough)
    \[
        0 < |H|^{-1} e^{\lambda \Im \Psi} \leq 1 + \mc{O}(\lambda^{-\frac{1}{2}}) + \mc{O}(h\|\Lambda_{g_1}^\lambda - \Lambda_{g_2}^\lambda\|_{H^1_h \to L^2}).
    \]
    (In fact $|H|^{-1} e^{\lambda \Im \Psi}$ is also bounded uniformly from below by a positive constant by Lemma \ref{lemma:smoothness-parameter}, but we will not need this.) After multiplying \eqref{eq:XXX} with $|H|^{-1} e^{\lambda \Im \Psi}$ (note that it is uniformly bounded), it follows that
    \[
        \min(1, |H|^{-1}e^{\lambda \Im \Psi}) - \cos(\lambda \Re \Psi + f_\Gamma) = \mc{O}(\lambda^{-\frac{1}{2}}) + \mc{O}(h\|\Lambda_{g_1}^\lambda - \Lambda_{g_2}^\lambda\|_{H^1_h \to L^2}),
    \]
    We then obtain in the verbatim same way as in the two-solution proof of Theorem \ref{thm_main_1} that
    \[
        -\I \int_\Gamma \frac{d\left(\frac{H}{|H|}\right)}{\frac{H}{|H|}} + \lambda \Re \Psi (z) \pm \arccos(\min(1, |H|^{-1}e^{\lambda \Im \Psi})(z)) =\mc{O}(\lambda^{-\frac{1}{4}}) + \mc{O}(h^{\frac{1}{2}}\|\Lambda_{g_1}^\lambda - \Lambda_{g_2}^\lambda\|_{H^1_h \to L^2}^{\frac{1}{2}}).
    \]
    Again, for simplicity, by convention here we mean that this holds for at least one choice of the $\pm$ sign. This together with the expansion of $\Re \Psi$ from Lemma \ref{lemma:smoothness-parameter} gives that $\ell_{g_1}(z) - \ell_{g_2}(z) = \mc{O}(\lambda^{-\frac{1}{2}})$ with uniform implied estimates. As $z \in \partial_-SM$ was arbitrary, this completes the proof of the theorem.
    \medskip
    
    \emph{Alternative proof of Step 4.} Let us argue based on Proposition \ref{prop:identity}. Let $z = (x, v) \in \partial_-SM$, and let $\gamma$ be a smooth path of uniformly bounded length such that $z_0 = \gamma(0) \not \in \mc{G}_\varepsilon$ (so that $\ell_{g_1}(z_0) = \ell_{g_2}(z_0)$) and $\gamma(1) = z$ (as before, we may take such a path in the same fibre as $z$). Integrating the identity of Proposition \ref{prop:identity} along $\gamma$, for $i = 1, 2$, and subtracting the obtained expressions gives
    \[
        \ell_{g_1}(z) - \ell_{g_2}(z) = \int_{S_{g_1}(\gamma)} \alpha - \int_{S_{g_2}(\gamma)} \alpha.
    \]
    For each $t \in [0, 1]$, by Steps 2 and 3, we have that $d(S_{g_1}(\gamma(t)), S_{g_2}(\gamma(t))) = \mc{O}(\lambda^{-\frac{1}{2}})$. Here we use that by compactness, this distance induced by the Riemannian metric is uniformly equivalent near the diagonal to the distance introduced in \S \ref{ssec:distance-lens-data}. The manifold $SM$ and so $SM|_{\partial M}$ as a submanifold is equipped with the natural Sasaki metric induced by $g_0$ (see \cite{Paternain-99}); alternatively, any fixed Riemannian metric would work for the following. We denote by $\exp$ the associated exponential map. By taking $\lambda$ uniformly large enough, for every $t \in [0, 1]$ there is a unique constant speed shortest geodesic $[0, 1] \ni s \mapsto \Gamma(t, s)$ connecting $S_{g_1}(\gamma(t))$ and $S_{g_2}(\gamma(t))$. We may write 
    \[
        \Gamma(t, s) = \exp_{S_{g_1}(\gamma(t))}\left(s\exp_{S_{g_1}(\gamma(t))}^{-1}(S_{g_2}(\gamma(t)))\right).
    \]
    This shows that $\Gamma$ is smooth, and using that $dS_{g_1}$ and $dS_{g_2}$ are uniformly bounded for the metrics inside $\mc{M}_{N, B, \delta, T}(g_0)$ (by uniform bounds on the geodesic flow and transversality of geodesics generated by $\mc{G}_\varepsilon$), that $|\partial_t \Gamma| = \mc{O}(1)$. By definition, we also have
    \begin{equation}\label{eq:transverse-derivative}
        |\partial_s \Gamma| = |\exp_{S_{g_1}(\gamma(t))}^{-1}(S_{g_2}(\gamma(t))| = d(S_{g_1}(\gamma(t)), S_{g_2}(\gamma(t))) = \mc{O}(\lambda^{-\frac{1}{2}}).
    \end{equation}
    By Stokes' theorem, we conclude that
    \begin{equation}\label{eq:stokes-lens}
        \ell_{g_1}(z) - \ell_{g_2}(z) = \int_{\Gamma([0, 1]^2)} d\alpha - \int_{\Gamma(\{1\} \times [0, 1])} \alpha + \int_{\Gamma(\{0\} \times [0, 1])} \alpha.
    \end{equation}
    Note that, by the preceding disucssion,
    \[
        \left|\int_{\Gamma([0, 1]^2)} d\alpha \right| = \left|\int_{[0, 1]^2} d\alpha(\Gamma(s, t))(\partial_t\Gamma, \partial_s \Gamma)\,ds dt\right| = \mc{O}(\lambda^{-\frac{1}{2}}).
    \]
    The last two terms in \eqref{eq:stokes-lens} are equal to $\mc{O}(\lambda^{-\frac{1}{2}})$, using \eqref{eq:transverse-derivative}. This shows that $\ell_{g_1}(z) - \ell_{g_2}(z) = \mc{O}(\lambda^{-\frac{1}{2}})$, and since $z$ was arbitrary this concludes the proof.    
    \end{proof}
    \begin{remark}
        It is natural to try to use interpolation in the alternative proof of Step 4 above. Indeed, using that $\nabla^2 S_{g_i}$ are uniformly bounded for $i = 1, 2$, interpolating we get $d(dS_{g_1}, dS_{g_2}) = \mc{O}(\lambda^{-\frac{1}{4}})$, yielding $|\ell_{g_1} - \ell_{g_2}| = \mc{O}(\lambda^{-\frac{1}{4}})$ by direct estimation from Proposition \ref{prop:identity}. Note that this is weaker than what is obtained from an application of Stokes' theorem as demonstrated above.
    \end{remark}
\bibliography{ref}
\bibliographystyle{siam}
    \end{document}

%% file: drawing.pdf_tex
\begingroup%
  \makeatletter%
  \providecommand\color[2][]{%
    \errmessage{(Inkscape) Color is used for the text in Inkscape, but the package 'color.sty' is not loaded}%
    \renewcommand\color[2][]{}%
  }%
  \providecommand\transparent[1]{%
    \errmessage{(Inkscape) Transparency is used (non-zero) for the text in Inkscape, but the package 'transparent.sty' is not loaded}%
    \renewcommand\transparent[1]{}%
  }%
  \providecommand\rotatebox[2]{#2}%
  \newcommand*\fsize{\dimexpr\f@size pt\relax}%
  \newcommand*\lineheight[1]{\fontsize{\fsize}{#1\fsize}\selectfont}%
  \ifx\svgwidth\undefined%
    \setlength{\unitlength}{568.5170908bp}%
    \ifx\svgscale\undefined%
      \relax%
    \else%
      \setlength{\unitlength}{\unitlength * \real{\svgscale}}%
    \fi%
  \else%
    \setlength{\unitlength}{\svgwidth}%
  \fi%
  \global\let\svgwidth\undefined%
  \global\let\svgscale\undefined%
  \makeatother%
  \begin{picture}(1,0.76107747)%
    \lineheight{1}%
    \setlength\tabcolsep{0pt}%
    \put(0,0){\includegraphics[width=\unitlength,page=1]{drawing.pdf}}%
  \end{picture}%
\endgroup%